\numberwithin{equation}{section}
\newtheorem{Theo}{Theorem}
\newtheorem{Lem}[Theo]{Lemma}
\newtheorem{Prop}{Proposition}
\newtheorem{Defi}{Definition}
\newtheorem{Assu}{Assumption}
\newtheorem{Rem}{Remark}
\numberwithin{equation}{section}
\begin{document}
\title{Stochastic theta methods for free stochastic differential equations}

\author[a]{Yuanling Niu}
\author[a]{Jiaxin Wei \thanks{Corresponding author: jiaxinwei@csu.edu.cn}}
\author[a]{Zhi Yin}
\author[a]{Dan Zeng}
\affil[a]{School of Mathematics and Statistics, HNP-LAMA, Central South University, Changsha, 410083, China.}
\renewcommand*{\Affilfont}{\small \it}
\date{}

\maketitle

\begin{abstract}
      {\rm
We introduce free probability analogues of the stochastic theta methods for free stochastic differential equations in this work. Assume that the drift coefficient of the free stochastic differential equations is operator Lipschitz and the  diffusion coefficients are locally operator Lipschitz, we prove the strong convergence of the numerical methods. Moreover, we investigate the exponential stability in mean square of the equations and the numerical methods.
In particular, the free stochastic theta methods with $\theta \in [1/2, 1]$ can inherit the exponential stability of original equations for any given step size. Our methods offer better stability than the free Euler-Maruyama method. Numerical results are reported to confirm these theoretical findings and show the efficiency of our methods compared with the free Euler-Maruyama method.      } \\

      \textbf{AMS subject classification: }
      {\rm {65C30, 60H35, 46L54, 46L55}}\\
      
\textbf{Key Words: }{\rm free stochastic differential equations, stochastic theta methods, strong convergence, exponential stability in mean square }
\end{abstract}

\maketitle

\section{Introduction}
Free probability, introduced by Voiculescu \cite{D94}, is a non-commutative analogue of classical probability, where the random variables are considered as operators in a non-commutative probability space. Free stochastic calculus was initiated by Biane and Speicher \cite{kummererStochasticIntegrationCuntz1992, bianeStochasticCalculusRespect1998, bianeFreeDiffusionsFree2001, speicher2001free} and is devoted to studying the operator-valued process.  
Let $(\mathscr{F}, \psi)$ be a non-commutative probability space, where $\mathscr{F}$ is a von Neumann algebra and $\psi: \mathscr{F} \to \mathbb{C}$ is a faithful unital normal trace on $\mathscr{F}.$ Denote the set of all self-adjoint operators in $\mathscr{F}$ by $\mathscr{F}^{sa}.$
We will consider the following free analogues of the stochastic differential equations \cite{karginFreeStochasticDifferential2011}:
\begin{equation}\label{eq:fSDE-0}
{\rm d} U_t =\alpha (U_t) {\rm dt} + \sum_{i=1}^k \beta^i (U_t)  {\rm d} W_t \gamma^i (U_t),
\end{equation}
where $\{U_t\}_t$ is a $\mathscr{F}^{sa}$-valued process, $\{W_t\}_t$ is the free Brownian motion \cite{bianeStochasticCalculusRespect1998} on $\mathscr{F}^{sa}$, and $\alpha, \beta^i,  \gamma^i: \mathscr{F}^{sa} \rightarrow \mathscr{F}^{sa}$ are operator-valued functions for $i=1,2,\ldots,k,$ $k\in \mathbb{N}$. Usually, $\alpha$ and $\beta^i,  \gamma^i$ are called the drift and diffusion coefficients, respectively. One motivation for studying free stochastic differential equations (free SDEs) dates back to stochastic calculus on matrices. 
Let $\{U_t^{(N)}\}_t$ be a $M_N(\mathbb{C})^{sa}$-valued process, where $M_N(\mathbb{C})^{sa}$ is the set of all $N \times N$ self-adjoint matrices,  and let $\{W_t^{(N)}\}_t$ be the ${\rm GUE}_N$ Brownian  motion \cite{Kemp2016}. Then the free SDEs \eqref{eq:fSDE-0} can be considered as the large $N$-limit of the following matrix-valued SDEs:
\begin{equation}
{\rm d} U_t^{(N)} =\alpha (U_t^{(N)}) {\rm dt} +\sum_{i.=1}^k \beta^i (U_t^{(N)})  {\rm d} W_t^{(N)} \gamma^i (U_t^{(N)}).
\end{equation}
We refer to \cite{Kemp2016} for the proof of the convergence of $\{ W_t^{(N)}\}_t$ to the free Brownian motion $\{W_t\}_t$. Hence, solving free SDEs \eqref{eq:fSDE-0} helps to study the limiting eigenvalue distribution of $U_t^{(N)}$ \cite{karginFreeStochasticDifferential2011, Kemp2016, hoBrownMeasuresFree2022}.

However, finding the exact solution to free SDEs is a daunting task due to the non-commutativity of the operators. By consulting the classical SDEs,  one can provide numerical solutions to free SDEs as a means of approximating exact solutions. For classical SDEs, there are many experts contributing themselves to exploring various numerical methods; see \cite{kloeden1992stochastic,milstein2021stochastic,highamStrongConvergenceEulerType2002, buckwarStochasticRungeKutta2010,MR2985171}.  On the contrary, studying the numerical methods for free SDEs is an emerging field. Biane and Speicher initially developed an Euler scheme to approximate the solution of a peculiar free SDE in the operator norm  \cite{bianeFreeDiffusionsFree2001}. Very recently, Schl\"{u}chtermann and Wibmer introduced a free analogue of the Euler-Maruyama method (free EM) for the free SDEs \eqref{eq:fSDE-0} \cite{schluechtermannNumericalSolutionFree2022a}. More precisely, for $P \in \mathbb{N},$ let  $ 0=t_0 \le t_1\le \cdots \le t_P=T $  be a partition of $[0, T]$ with a constant step size $ h= T/P,$ define 
	\begin{equation}
	U_{n+1}=U_n+\alpha(U_n)h+\sum_{i=1}^k \beta^i(U_n)\Delta W_n\gamma^i(U_n),\ n=0,1,\ldots,P-1,
	\end{equation}
with the starting value $ U_0,$ where $ \Delta W_n:=W_{t_{n+1}}-W_{t_n}.$ $ U_n $ is called the numerical approximation of $ U_t $ at time point $ t_n $. They proved the strong and weak convergence order of free EM by assuming that $\alpha,\beta^i,\gamma^i$ are locally operator Lipschitz in the $L^2 (\psi)$ norm (we refer to Definition \ref{defi:Lip} for the definition).

As indicated in \cite{schluechtermannNumericalSolutionFree2022a}, to implement free EM as a numerical method on a computer, it is necessary to consider the set of $N \times N$ matrices instead of the abstract von Neumann algebra $\mathscr{F}$ and let $N \to \infty$. However, the corresponding computational cost may be extremely expensive, so developing algorithms without step size restrictions for stability is meaningful.  Implicit methods usually have better stability properties for classical SDEs \cite{Higham2007,higham2003E,MR2926259,chenConvergenceStabilityBackward2020,MR4589707}.  Owing to these facts, we aim to establish the numerical approximations of implicit numerical methods for free SDEs. Motivated by the pioneering work of Schl\"{u}chtermann and Wibmer \cite{schluechtermannNumericalSolutionFree2022a} and the work of stochastic theta methods applied to classical SDEs \cite{MR2660864,MR1981636,MR1781202,MR2926259,higham2003E,MR4132905}, we introduce the following free analogues of stochastic theta methods (free STMs): let $0\leq \theta \leq 1,$
define
\begin{equation}\label{eq:stm-0}
	\bar{U}_{n+1}=\bar{U}_n+(1-\theta)\alpha(\bar{U}_{n})  h+\theta\alpha(\bar{U}_{n+1})  h+ \sum_{i=1}^k \beta^i (\bar{U}_n)  \Delta W_n\gamma^i(\bar{U}_n),\ n=0,1,\ldots,P-1,
	\end{equation} 
with the starting value $U_0\in \mathscr{F}^{sa}.$ We note that the free STMs reduce to the free EM when $\theta =0$. However, the free STMs with $0<\theta \leq 1$ are implicit methods which require some conditions for the existence of the numerical solution. For instance, suppose that $\alpha$ is operator Lipschitz, i.e., there is a constant $L_0>0$ such that
\begin{equation*}
\left\| \alpha(U) -\alpha (V) \right\| \leq L_0 \left\| U- V \right\|
\end{equation*}
for any operators $U, V \in \mathscr{F}^{sa}.$ Then $\bar{U}_n$ uniquely exists if $h <1/ (\theta L_0)$ by a fixed point argument.  

Our paper contributes two main results:  First, we will prove the strong convergence property of the free STMs. Just as in the classical case, strong convergence is generally used to judge the validity of numerical methods. Suppose that $\alpha$ is operator Lipschitz in the $L^2 (\psi)$ norm and $ \beta^i,\gamma^i\ (i=1, \ldots, k)$ are locally operator Lipschitz in the $L^2 (\psi)$ norm, then the free STMs \eqref{eq:stm-0} strongly converge to the exact solution to the free SDEs $\eqref{eq:fSDE-0}$ with order $1/2$; see Theorem \ref{thm:converges-stm}. We remark that our condition for the drift coefficient $\alpha$ is stronger than the one for free EM \cite{schluechtermannNumericalSolutionFree2022a}. The second result is devoted to the stability of the numerical solution, which is essential to confirming whether the numerical methods can inherit the stability of original equations. To the best of our knowledge, the stability of the exact and numerical solution to free SDEs has not been studied. Suppose that there exist constants $ \bar{L}, \bar{K}>0$ such that ${\rm (a)} \;  \psi( U \alpha(U))+\frac{k}{2}\sum_{i=1}^k \|\beta^i(U) \|_2^2 \cdot \|\gamma^i(U) \|_2^2 \leq -\bar{L} \| U \|_2^2$ and ${\rm (b)} \; \|\alpha(U)\|_2^2\leq \bar{K}\|U \|_2^2$ for any $ U \in \mathscr{F}^{sa}.$ Here  $ \|\cdot \|_2:=\psi[|\cdot|^2]^{1/2},$ where $|U|= (U^*U)^{1/2}$ for any $U \in \mathscr{F}.$ For $\theta \in [0, 1/2),$ the free STMs \eqref{eq:stm-0} are exponentially stable in mean square whenever the step size $0< h  < \frac{2\bar{L}}{(1-2\theta)\bar{K}}$; for $\theta \in [1/2, 1],$ the free STMs are exponentially stable in mean square for any given step size; see Theorem \ref{12wdx}. We emphasize that the case for $\theta =1,$ which is called the free backward Euler method (free BEM), offers better stability and efficiency than free EM. 
 At first glance, our results are similar to the classical case; however, our proofs involve many non-trivial applications of the free It\^{o} formula {\cite{anshelevichItoFormulaFree2002,bianeStochasticCalculusRespect1998} and free Burkholder-Gundy inequality  \cite{bianeStochasticCalculusRespect1998}, which have their own independent interests.  Finally, we verify our results by conducting numerical experiments on several specific free SDEs. 

The rest of the paper is organized as follows: Section \ref{sec:pre} collects some preliminary results. Section \ref{sec:main} concerns the strong convergence order and exponential stability in mean square of the numerical solution, which offer our main results. In the last section, we provide some numerical experiments to verify our theoretical results.  

\section{Preliminaries}\label{sec:pre}

In this section, we collect some results in free probability and free stochastic calculus, and refer to \cite{NS2006, mingoFreeProbabilityRandom2017} for more details. We will also discuss the solution of free SDEs, which refers to \cite{karginFreeStochasticDifferential2011,MR1475218}.


\subsection{Free probability}

 A non-commutative probability space $(\mathscr{F}, \psi)$ is a von Neumann algebra $\mathscr{F}$, with a faithful unital normal trace $\psi: \mathscr{F} \to \mathbb{C}.$ Here are two examples of non-commutative probability spaces:
\begin{enumerate}[{\rm (a)}]
\item $(L^\infty(\mathbb{R}, \mu), \mathbb{E}),$ where $\mu$ is a probability measure supporting on $\mathbb{R},$ and $\mathbb{E}$ is the expectation defined by
$\mathbb{E}[f]:= \int_{\mathbb{R}} f(t) {\rm d \mu(t)}.$
\item $(M_N(\mathbb{C}), {\rm tr}_N),$ where $M_N(\mathbb{C})$ is the set of $N \times N$ matrices, and ${\rm tr}_N:={\rm Tr}/N$ is the normalized trace.
\end{enumerate}

The operator $U \in \mathscr{F}$ is called a non-commutative probability random variable, and it is called centered if $\psi (U) =0.$ Instead of ``classical independence", ``free independence" takes a central role in free probability. Let $\mathscr{F}_1, \mathscr{F}_2, \ldots, \mathscr{F}_m$ be subsets of $\mathscr{F}.$ Denote $W^*(\mathscr{F}_i)$ by the von Neumann sub-algebra generated by $\mathscr{F}_i.$ We call $\mathscr{F}_1, \mathscr{F}_2, \ldots, \mathscr{F}_m$ are freely independent, if for any centered random variables $U_j \in W^* (\mathscr{F}_{i_j})~( i_j \in \{1, \ldots, m\}, j=1, \ldots, k),$
\begin{equation*}
\psi(U_1 \cdots U_k)=0,
\end{equation*}
whenever we have $i_j \neq i_{j+1}$  $(j=1, \ldots, k-1).$ Roughly speaking, free independence provides a systematic way to calculate the joint distribution of random variables from their marginal distributions. For instance, if $\{U_1, U_2\}, \{U_3\}$ are freely independent, then $\psi (U_1 U_3 U_2) = \psi (U_1 U_2) \cdot \psi (U_3).$

A self-adjoint random variable $S \in \mathscr{F}$ is called a semicircular element (or has a semicircular distribution with mean 0 and variance $\sigma^2$) \cite{MR3645423}, if 
\begin{equation*}
\psi \left(S^k \right) = \int_{\mathbb{R}} t^k {\rm d\mu_{sc} (t)}
\end{equation*}
for any $k \in \mathbb{N},$ where $\mu_{sc}$ is a probability measure supporting on $\mathbb{R}$ with density ${\rm d \mu_{sc}(t)} = \frac{1}{2\pi \sigma^2} \sqrt{4\sigma^2- t^2} \cdot {\bf 1}_{\{ |t| \leq 2\sigma\}} (t){\rm dt}.$

For any $U \in \mathscr{F},$ define 
\begin{equation*}
\left\| U \right\|_p:= \left( \psi \left( |U|^p\right) \right)^{1/p}
\end{equation*}
for $1\leq p <\infty,$ where $|U| = (U^* U)^{1/2}.$  The completion of $\mathscr{F}$ with respect to the norm $\|\cdot \|_p$ is called a non-commutative $ L^p $ space \cite{PX2003}, denoted by $ L^p (\psi).$ 


\subsection{Free stochastic integral}

Let $(\mathscr{F}, \psi) $ be a non-commutative probability space, and a filtration $\{\mathscr{F}_t\} _{t \geq 0}$ be a family of sub-algebras of $\mathscr{F}$ such that $ \mathscr{F}_s \subset \mathscr{F}_t $ for $s \le t$. A free stochastic process is a family of elements $\{U_t\}_{t\geq 0}$ for which the increments $U_t -U_s$ are freely independent with respect to the sub-algebra $\mathscr{F}_s$. $\{U_t\}_{t\geq 0}$ is called adapted to the filtration $\{\mathscr{F}_t\} _{t \geq 0}$ if $U_t \in \mathscr{F}_t$ for all $t \geq 0.$ A family of self-adjoint elements $ \{W_t\} _{t \ge 0} $ in $\mathscr{F}$ is called a free Brownian motion if it satisfies 
\begin{enumerate}[{\rm (a)}]
\item $ W_0 = 0;$
\item The increments $ W_t - W_s $ are freely independent from $ \mathscr{W}_s $ for all $ 0 \le s < t $, where $ \mathscr{W}_s$ is the von Neumann algebra generated by $\{W_\tau: 0 \leq \tau \leq s\}$;	
\item The increment $ W_t - W_s $  has a semicircular distribution with mean 0 and variance $ t-s $ for all $ 0 \le s < t $.	
 \end{enumerate}

It is clear that $\{ \mathscr{W}_t\}_{t \geq 0}$ is a filtration in $\mathscr{F},$ and the free Brownian motion $\{W_t\}_{t\geq 0}$ is adapted to $\{ \mathscr{W}_t\}_{t \geq 0}.$
Given $T>0,$ suppose that $ \beta_t,  \gamma_t \in \mathscr{W}_t $ and $\|\beta_t\| \cdot \|\gamma_t\| \in L^2([0,T])$. Let $0= t_0 \leq t_1 \leq \cdots \leq t_n=T$ be a partition of $ [0,T] $, denoted by $ \Delta_n $. 
Consider the finite sum 
\begin{equation*}
I_n:=\sum_{i=1}^{n}\beta_{t_{i-1}}(W_{t_i}-W_{t_{i-1}})\gamma_{t_{i-1}}.
\end{equation*}
It is known \cite{bianeStochasticCalculusRespect1998} that $I_n$ converges in the operator norm as $ d(\Delta_n)\rightarrow 0,$ where $ d(\Delta_n):=\max_{1\le k\le n}(t_{k}-t_{k-1}),$  and the convergence does not depend on the partition. Hence, we have the following definition:

\begin{Defi}[Free stochastic integral \cite{bianeStochasticCalculusRespect1998,karginFreeStochasticDifferential2011}] 
Given $T>0,$ suppose that $ \beta_t,  \gamma_t \in \mathscr{W}_t $ and $\|\beta_t\| \cdot \|\gamma_t\| \in L^2([0,T])$. The free stochastic integral of $\{\beta_t\}_{t\geq 0}$ and $\{\gamma_t\}_{t\geq 0}$ is defined as the limit (in the operator norm) of $I_n$, denoted by
\begin{equation*}
I=\int_{0}^{T}\beta_t {\rm d} W_t \gamma_t.
\end{equation*}
\end{Defi}

A key ingredient to show the convergence of $I_n$ is the following free Burkholder-Gundy (B-G) inequality \cite{bianeStochasticCalculusRespect1998}, which will be used frequently in our paper: 
\begin{equation}\label{eq:B-G}
\left\|\int_0^T \beta_t {\rm d} W_t \gamma_t \right\| \leq 2 \sqrt{2} \left( \int_{0}^T \|\beta_t\|^2 \cdot \|\gamma_t\|^2 {\rm dt} \right)^{1/2}, 
\end{equation}  
and for the $L^2(\psi)$-norm, we have the following free $L^2(\psi)$-isometry:
\begin{equation}\label{eq:isometry}
\left\|\int_0^T \beta_t {\rm d} W_t \gamma_t \right\|_2 =\left( \int_{0}^T \|\beta_t\|^2_2 \cdot \|\gamma_t\|^2_2 {\rm dt} \right)^{1/2}. 
\end{equation}  

Another important tool is the free It\^{o} formula \cite{bianeStochasticCalculusRespect1998, anshelevichItoFormulaFree2002}. In terms of formal rules, it can be written as follows:
\begin{equation}\label{eq:free-ito}
\begin{split}
&\alpha_t {\rm dt} \cdot \beta_t {\rm dt}  = \alpha_t {\rm dt} \cdot \beta_t {\rm d} W_t \gamma_t = \alpha_t {\rm d} W_t \beta_t \cdot  \gamma_t {\rm dt}=0,\\
&\alpha_t {\rm d} W_t \beta_t \cdot \gamma_t {\rm d} W_t \zeta_t  =\psi \left(\beta_t \gamma_t \right)  \alpha_t \zeta_t  {\rm dt}
\end{split}
\end{equation}
for any $\alpha_t, \beta_t, \gamma_t, \zeta_t \in \mathscr{W}_t.$


\subsection{Free stochastic differential equations}

Given operator-valued functions $\alpha, \beta^i,  \gamma^i: \mathscr{F}^{sa} \rightarrow \mathscr{F}^{sa}, i=1,2,\ldots,k,$ $k\in \mathbb{N}.$ A general form of free SDEs with an initial value of $U_0 \in \mathscr{F}^{sa}$ is given by \cite{karginFreeStochasticDifferential2011}
\begin{equation}\label{eq:fSDE}
{\rm d} U_t =\alpha (U_t) {\rm dt} +\sum_{i=1}^k \beta^i (U_t)  {\rm d} W_t \gamma^i(U_t),
\end{equation}
which is a convenient shortcut notation for the following integral equations:
\begin{equation}\label{eq:solution}
U_t = U_0 +\int_{0}^{t} \alpha(U_s ) {\rm ds} + \sum_{i=1}^k \int_{0}^{t} \beta^i (U_s ) {\rm d} W_s \gamma^i(U_s).   
\end{equation} 

Motivated by the classical SDEs \cite{MR1475218}, we have the following definition for the solution of free SDEs:
\begin{Defi}\label{djie}
For $T>0,$ a free stochastic process $\{U_t\}_{0\le t \le T}$ adapted to the filtration $\{\mathscr{W}_t\}_{t\geq 0}$ is called a (strong) solution to the free SDEs \eqref{eq:fSDE}, if the following conditions hold:
\begin{enumerate}[{\rm (a)}]
\item $U_t \in \mathscr{F}^{sa}$  for all $0\le t \le T$; 
\item $\|\alpha(U_t)\| \in L^1([0,T])$ and $\|\beta^i(U_t)\| \cdot \|\gamma^i(U_t)\| \in L^2([0,T]), i=1,2,\ldots,k$; 
\item $U_t$ fulfills equation \eqref{eq:solution} for all $0\le t \le T.$ 
\end{enumerate}
\end{Defi}

Recall that an operator-valued function $f: \mathscr{F}^{sa} \rightarrow \mathscr{F}^{sa}$ is called locally operator Lipschitz, if for all $M>0,$ 
there is a constant $L_f(M)>0$ such that
\begin{equation*}
\left\| f(U) -f (V) \right\| \leq L_f(M) \left\| U- V \right\|
\end{equation*}
for any $U, V \in \mathscr{F}^{sa}$ and $\| U \|, \| V \| \leq M.$ Moreover, if there is a constant $L_f>0$ such that
\begin{equation*}
\left\| f(U) -f (V) \right\| \leq L_f \left\| U- V \right\|
\end{equation*}
for any $U, V \in \mathscr{F}^{sa}.$ Then $f$ is called operator Lipschitz.

\begin{Rem}
Let $f: \mathbb{R} \to \mathbb{R}$ be a locally bounded, measurable real-valued function. We may define an operator-valued function $\tilde{f}: \mathscr{F}^{sa} \to \mathscr{F}^{sa}$ given by $\tilde{f}(U) = f(U)$ for any $U \in \mathscr{F}^{sa},$ where $f(U) \in \mathscr{F}^{sa}$ is obtained by the functional calculus. In this sense, the real-valued function $f$ is called (locally) operator Lipschitz if $\tilde{f}$ is (locally) operator Lipschitz. It is well-known that $f$ is Lipschitz does not imply that it is operator Lipschitz \cite{AP2016}. For instance, $f(x)= |x|$ is not operator Lipschitz.  
\end{Rem}

It was shown by Kargin that there exists a local solution to the free SDEs \eqref{eq:fSDE} if $\alpha, \beta^i,$ and $\gamma^i$ $(i=1,2,\ldots,k)$ are locally operator Lipschitz. More precisely, we have the following theorem:
\begin{Theo}[Theorem 3.1, \cite{karginFreeStochasticDifferential2011}]\label{thm:solution}
Suppose that $\alpha, \beta^i,$ and $\gamma^i$ $(i=1,2,\ldots,k)$ are locally operator Lipschitz. Then there exist a sufficient small $T_0>0$ and a family of operators $U_t$ defined for
all $t \in [ 0, T_0)$, such that $U_t$ is a unique solution to \eqref{eq:fSDE} for $t < T_0.$
Moreover, the solution $\{U_t\}_{t \in [0, T_0)}$ is uniformly bounded, i.e., 
there exists a constant $ A>0,$ such that
\begin{equation*}
\sup_{0\le t < T_0}\|U_t\| \le A. 
\end{equation*}  
\end{Theo}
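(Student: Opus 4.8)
The plan is to construct the solution by a Picard iteration and identify it as the unique fixed point of a contraction mapping, working \emph{directly in the operator norm} rather than in $L^2(\psi)$. First I would fix the localization radius $M := \|U_0\| + 1$ and let $L = L(M)$ be a common local operator Lipschitz constant for $\alpha, \beta^i, \gamma^i$ on the ball $\{U \in \mathscr{F}^{sa} : \|U\| \le M\}$, together with a uniform bound $C = C(M)$ for $\|\alpha(U)\|, \|\beta^i(U)\|, \|\gamma^i(U)\|$ there (using local boundedness of the coefficients). On a time interval $[0,T_0]$ to be chosen, I would work in the complete metric space $\mathcal{B}$ of adapted, operator-norm-continuous processes $\{U_t\}$ valued in $\mathscr{F}^{sa}$ with $\sup_{0\le t \le T_0}\|U_t - U_0\| \le 1$ (so that $\|U_t\|\le M$), equipped with $d(U,V) = \sup_{0\le t\le T_0}\|U_t - V_t\|$; self-adjointness of the iterates is inherited from the real functional calculus of the coefficients and must be checked to persist through the integration.

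Next I would define the map $\Phi$ by
\begin{equation*}
\Phi(U)_t = U_0 + \int_0^t \alpha(U_s)\,{\rm ds} + \sum_{i=1}^{k}\int_0^t \beta^i(U_s)\,{\rm d}W_s\,\gamma^i(U_s),
\end{equation*}
and show that $\Phi$ maps $\mathcal{B}$ into itself. The drift term is bounded crudely by $\left\|\int_0^t \alpha(U_s)\,{\rm ds}\right\| \le C T_0$. For each martingale term the free Burkholder--Gundy inequality \eqref{eq:B-G} supplies the operator-norm estimate
\begin{equation*}
\left\|\int_0^t \beta^i(U_s)\,{\rm d}W_s\,\gamma^i(U_s)\right\| \le 2\sqrt{2}\left(\int_0^t \|\beta^i(U_s)\|^2\,\|\gamma^i(U_s)\|^2\,{\rm ds}\right)^{1/2} \le 2\sqrt{2}\,C^2\sqrt{T_0},
\end{equation*}
so that $\sup_t\|\Phi(U)_t - U_0\| \le C T_0 + 2\sqrt{2}\,k C^2 \sqrt{T_0}$, which is $\le 1$ once $T_0$ is small; adaptedness and norm-continuity follow from the structure of the increments and the $\sqrt{T_0}$ modulus just displayed.

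To obtain the contraction I would estimate $d(\Phi(U),\Phi(V))$ by subtracting the two integral expressions. The drift difference is controlled by $L T_0\, d(U,V)$ via the Lipschitz bound. For each diffusion difference I would add and subtract a mixed term,
\begin{equation*}
\beta^i(U_s)\,{\rm d}W_s\,\gamma^i(U_s) - \beta^i(V_s)\,{\rm d}W_s\,\gamma^i(V_s) = \bigl(\beta^i(U_s) - \beta^i(V_s)\bigr){\rm d}W_s\,\gamma^i(U_s) + \beta^i(V_s)\,{\rm d}W_s\bigl(\gamma^i(U_s) - \gamma^i(V_s)\bigr),
\end{equation*}
and apply \eqref{eq:B-G} to each piece together with the Lipschitz and boundedness constants, yielding a bound of the form $C''\sqrt{T_0}\,d(U,V)$. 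Collecting all contributions gives $d(\Phi(U),\Phi(V)) \le (L T_0 + C'\sqrt{T_0})\,d(U,V)$ with $C' = C'(M,k)$, and shrinking $T_0$ makes the factor strictly less than $1$. The Banach fixed point theorem then furnishes a unique $\{U_t\}\in\mathcal{B}$ with $\Phi(U)=U$, which is exactly a solution of \eqref{eq:solution}; the uniform bound holds with $A = M = \|U_0\|+1$ since $U\in\mathcal{B}$, and uniqueness among \emph{all} solutions (not just those in $\mathcal{B}$) follows from a Gronwall-type argument applied to $t\mapsto \sup_{s\le t}\|U_s - \tilde U_s\|$ for two solutions, using the same estimates.

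The main obstacle I expect is the interplay between the operator-norm control and the square-root-in-time scaling of the stochastic integral. The free Burkholder--Gundy inequality \eqref{eq:B-G} is indispensable precisely because it delivers an \emph{operator-norm} bound (not merely the $L^2(\psi)$-isometry \eqref{eq:isometry}) on the martingale term; this is what lets the whole argument close in the operator norm and thereby keeps each iterate inside the ball where the local Lipschitz and boundedness constants are valid. The secondary difficulty is verifying that the iterates genuinely remain in $\mathscr{F}^{sa}$ and operator-norm continuous, which again rests on the $\sqrt{T_0}$ modulus from \eqref{eq:B-G}; once the function space $\mathcal{B}$ and the radius $M$ are fixed, the remaining estimates are routine.
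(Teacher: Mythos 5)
Your proposal is correct and is essentially the argument behind the cited result: the paper itself gives no proof (it imports Theorem 3.1 from Kargin's work), and Kargin's proof is exactly this Picard/contraction scheme in the operator norm on a short time interval, with the free Burkholder--Gundy inequality \eqref{eq:B-G} supplying the $\sqrt{T_0}$ operator-norm control of the stochastic integral and the uniform bound coming for free from the invariant ball. The only point to keep explicit is the standing structural assumption (which the paper also makes for the numerical scheme) that $\sum_i \beta^i(U)\,{\rm d}W\,\gamma^i(U)$ is self-adjoint, since a single term $\beta(U)\,{\rm d}W\,\gamma(U)$ with $\beta\neq\gamma$ need not be.
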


However, the existence of the global solution to \eqref{eq:fSDE} is not clear by only assuming that $\alpha, \beta^i,$ and $\gamma^i$ are locally operator Lipschitz. We provide some examples for which the global solution exists. 
\begin{enumerate}[{\rm (a)}]
\item Free Ornstein-Uhlenbeck equation \cite{karginFreeStochasticDifferential2011}:
\begin{equation*}
{\rm d} U_t = \mu U_t {\rm dt} + \sigma  {\rm d} W_t, \; \mu, \sigma \in \mathbb{R}.
\end{equation*}
\item Free Geometric Brownian motion I \cite{karginFreeStochasticDifferential2011}:
\begin{equation*}
{\rm d} U_t = \mu U_t {\rm dt} + \sqrt{U_t}  {\rm d} W_t \sqrt{U_t}, \; \mu \in \mathbb{R}.
\end{equation*}
\item Free Geometric Brownian motion II \cite{karginFreeStochasticDifferential2011}:
\begin{equation*}
{\rm d} U_t = \mu U_t {\rm dt} + U_t  {\rm d} W_t + {\rm d}W_t U_t, \; \mu \in \mathbb{R}.
\end{equation*}
\item Free Cox-Ingersoll-Ross equation \cite{Free-CIR, bouchaudFinancialApplicationsRandom2015}:
\begin{equation*}
{\rm d} U_t = (\alpha- \beta U_t)  {\rm dt} + \frac{\sigma}{2} \sqrt{U_t}  {\rm d} W_t + \frac{\sigma}{2} {\rm d}W_t \sqrt{U_t}, 
\end{equation*}
where $\alpha, \beta, \sigma >0$ and satisfy the Feller condition $2 \alpha \geq \sigma^2.$
\end{enumerate}
Examples 1, 3, and 4 can be generalized to the following model: 
\begin{equation}\label{fSDEdanbian}
{\rm d} U_t =\alpha (U_t) {\rm dt} +\sum_{i=1}^k \left( \beta^i(U_t)  {\rm d} W_t + {\rm d} W_t \gamma^i (U_t) \right).
\end{equation}
If $\alpha, \beta^i,$ and $\gamma^i$ $(i=1,2,\ldots,k)$ are operator Lipschitz, then there exists a unique global solution. The proof is similar to the classical case  \cite{MR1475218}, and we omit the details.


\section{Free analogues of the stochastic theta methods}\label{sec:main}

Inspired by the pioneering work \cite{schluechtermannNumericalSolutionFree2022a}, we can define the free analogues of stochastic theta methods (free STMs). 
Recall that $\alpha, \beta^i,$ and $\gamma^i: \mathscr{F}^{sa} \to \mathscr{F}^{sa}$ $(i=1,2,\ldots,k)$ are operator-valued functions.

\begin{Defi}\label{theta}
Given $T>0$ and $\theta \in [0,1].$ For $P \in \mathbb{N},$ let $0= t_0 \le t_1\le \cdots \le t_P=T $ be a partition of $[0, T]$ with constant step size $ h= T/P.$
Define the one step free STMs approximation $\bar{U}_n$ of the solution $U_t$ to
the free SDEs \eqref{eq:fSDE} on $[0, T]$ by
	\begin{equation}\label{eqtheta}
	\bar{U}_{n+1}=\bar{U}_n+(1-\theta)\alpha(\bar{U}_{n})  h+\theta\alpha(\bar{U}_{n+1})  h+ \sum_{i=1}^k \beta^i (\bar{U}_n)  \Delta W_n \gamma^i (\bar{U}_n), 
\end{equation}
$n =0, 1, \ldots, P-1,$ with start value $\bar{U}_0=U_0 \in \mathscr{F}^{sa}$ and where $\Delta W_n:=W_{t_{n+1}} -W_{t_n}$ is the increment of the free Brownian motion $\{W_t\}_{t\geq 0}.$  Here $\bar{U}_n \in \mathscr{F}^{sa}$ is called the numerical approximation of $U_t$ at the time point $t_n.$
\end{Defi}

For $\bar{U}_n$ to be self-adjoint, it requires that the sum $ \sum_{i=1}^k \beta^i (\bar{U}_n)  \Delta W_n \gamma^i (\bar{U}_n)$ is self-adjoint for all $n.$ If $\theta=0,$ free STM reduces to the free Euler-Maruyama method (free EM) introduced in \cite{schluechtermannNumericalSolutionFree2022a}, and $\bar{U}_{n+1}$ is explicitly determined by $\bar{U}_n.$ However, for $0< \theta \leq 1,$ Equation \eqref{eqtheta} is implicit. Since the locally operator Lipschitz property of $\alpha, \beta^i,$ and $\gamma^i$ is not enough to guarantee the well-posedness of free SDEs and free STMs. We require the following assumption:
\begin{Assu}[Well-posedness of free SDEs and free STMs]\label{ass0}
Given $T>0$, we assume that there exists a unique solution $\{U_t\}_{t\in [0, T]}$ to the free SDEs \eqref{eq:fSDE}, satisfying $\sup_{t \in [0, T]} \| U_t\| < \infty.$ Moreover, suppose that
there exists a unique solution $\{\bar{U}_n\}_{n =0,1, \ldots, P}$ to the free STMs \eqref{eqtheta}, such that $\{\bar{U}_n\}_{n=0,1, \ldots, P}$ is adapted to $\{\mathscr{W}_{t_n}\}_{n=0,1, \ldots, P}.$
\end{Assu}

\begin{Rem}
We remark that one simple condition for the well-posedness of the free STMs can be easily obtained by using a fixed-point argument.  Define
\begin{equation*}
F(U):=\bar{U}_n+(1-\theta)\alpha(\bar{U}_n)h+\theta\alpha(U)  h+\sum_{i=1}^k \beta^i (\bar{U}_n)  \Delta W_n\gamma^i(\bar{U}_n).  
\end{equation*} 
We assume that the function $\alpha: \mathbb{R} \to \mathbb{R}$ is operator Lipschitz with Lipschitz constant $L_0.$
If $h<1/ (\theta L_0)$, then we have 
\begin{equation*}
		\|F(U)-F(V)\|\le\theta h\|\alpha(U)-\alpha(V)\|\le L_0 \theta h\|U-V\|< \|U-V\|,
\end{equation*}
for any $U, V \in \mathscr{F}^{sa}.$ Hence, $F(\cdot)$ is a contraction, and the fixed-point theorem implies the existence and uniqueness of $\bar{U}_{n+1}$. Moreover, define 
$f: \mathbb{R} \to \mathbb{R}$ by
\begin{equation*}
f (x):= x- \theta h \alpha(x).
\end{equation*}
Then, 
\begin{equation*}
f(x_1) -f(x_2) \geq (1 - \theta L_0 h) (x_1 -x_2) 
\end{equation*}
for $x_1 >x_2.$ Hence, $f$ is uniformly monotone for $h<1/ (\theta L_0)$, and the inverse $f^{-1}$ exists for every $h<1/ (\theta L_0)$. 
By the functional calculus, we have
\begin{equation*}
\bar{U}_{n+1} = f^{-1} \left( \bar{U}_n+(1-\theta)\alpha(\bar{U}_n)h+\sum_{i=1}^k \beta^i (\bar{U}_n)  \Delta W_n\gamma^i(\bar{U}_n)\right).
\end{equation*}
Therefore, $\{\bar{U}_n\}_{n=0,1, \ldots, P}$ is adapted to $\{\mathscr{W}_{t_n}\}_{n=0,1, \ldots, P}$ whenever the step size $h<1/ (\theta L_0).$
\end{Rem}

For the convergence of the free STMs, we need the following additional assumption. For simplicity, we assume $k=1$, and the general case can be obtained easily by the triangular inequality.
\begin{Defi}[\cite{AP2016, schluechtermannNumericalSolutionFree2022a}]\label{defi:Lip}
An operator-valued function $f: \mathscr{F}^{sa} \rightarrow \mathscr{F}^{sa}$ is called locally operator Lipschitz in the $L^2 (\psi)$ norm, if for all $M>0,$ 
there is a constant $L_f(M)>0$ such that
\begin{equation*}
\left\| f(U) -f (V) \right\|_2 \leq L_f(M) \left\| U- V \right\|_2
\end{equation*}
for any $U, V \in \mathscr{F}^{sa}$ and $\left\| U\right\|_2, \left\| V \right\|_2 \leq M.$ 

Moreover, $f$ is called operator Lipschitz in the $L^2 (\psi)$ norm, if there is a constant $L_f>0$ such that
\begin{equation*}
\left\| f(U) -f (V) \right\|_2 \leq L_f\left\| U- V \right\|_2
\end{equation*}
for any $U, V \in \mathscr{F}^{sa}.$ 
\end{Defi}

\begin{Assu}\label{ass1}
We assume that $ \alpha$ is operator Lipschitz in the $L^2(\psi)$ norm, and $\beta,\gamma$ are locally operator Lipschitz in the $L^2 (\psi)$ norm. 
\end{Assu}

It is clear that there exists a constant $K_\alpha>0$, only depending on $L_\alpha$ and $U_0,$ such that for any $U\in \mathscr{F}^{sa},$ 
\begin{equation}\label{lin}
		\left \|  \alpha(U)  \right \|_2   \le K_\alpha \left(1+\left \| U \right \|_2 \right). 
\end{equation}
We will also frequently use the following condition:
\begin{equation}\label{lin-1}
\left \|  \alpha(U)  \right \|^2_2  \le K'_\alpha \left(1+\left \| U \right \|^2_2 \right),
\end{equation}
where $K'_\alpha>0$ is a constant only depending on $L_\alpha$ and $U_0.$ Moreover, there exist constants $K_\beta(M), K'_\beta(M)>0$, only depending on $L_\beta(M)$ and $U_0,$ such that for any $U\in \mathscr{F}^{sa}$ and $\left\| U\right\|_2 \leq M,$
\begin{equation}\label{lin-3}
		\left \|  \beta(U)  \right \|_2   \le K_\beta(M) \left(1+\left \| U \right \|_2 \right) \; \text{and} \; \left \|  \beta(U)  \right \|^2_2  \le K'_\beta(M) \left(1+\left \| U \right \|^2_2 \right).
\end{equation}
And similar results hold for $\gamma.$ 

In the rest of this section, we will use the following notations:
\begin{equation}
\begin{split}
L_M&: = \max\{ L_\alpha, L_\beta(M), L_\gamma(M)\}, \\
K_M&: = \max\{ K_\alpha, K_\beta(M), K_\gamma(M)\}, 
K'_M: = \max\{ K'_\alpha, K'_\beta(M), K'_\gamma(M)\}.
\end{split}
\end{equation}


\subsection{Uniformly boundness of the numerical solution in a given interval}

We introduce a piecewise constant process $\{\bar{U}_t \}_{t \in [0, T]}$ defined by 
\begin{equation}\label{piecewise}
\bar{U}_t:= \bar{U}_n
\end{equation}
for $t_n \leq t < t_{n+1},$ where $\{\bar{U}_n\}_{n=0,1, \ldots, P}$ is a numerical solution on $[0, T]$ given by \eqref{eqtheta}. Suppose that $\{U_t\}_{t \in [0, T]}$
is a solution to the free SDEs \eqref{eq:fSDE}, it is not difficult to show that $\| U_t \|_2$ is uniformly bounded; see \cite[Remark 3.3]{schluechtermannNumericalSolutionFree2022a}.

Firstly, we need the following technical lemma (see also \cite[Lemma 6.5]{schluechtermannNumericalSolutionFree2022a}).
\begin{Lem}\label{productlocal}
Assume that $\beta,\gamma$ are locally operator Lipschitz in the $L^2(\psi)$ norm. Suppose that $\|\bar{U}_n\|_2 \leq M.$ Then 
there is a constant $K'_{\beta\gamma}(M)>0$ such that 
\begin{equation*}
\left\| \beta(\bar{U}_n)  \Delta W_n \gamma(\bar{U}_n) \right\|_2^2  \leq K'_{\beta\gamma}(M) \left(1+ \left\| \bar{U}_n\right\|_2^2\right) h.
\end{equation*}
\end{Lem}

\begin{proof}  
Since $\bar{U}_n$ and $\Delta W_n$ are freely independent, by the free cumulant-moment formula (see \cite{mingoFreeProbabilityRandom2017}), we have 
\begin{equation}\label{eq:freecmf}
\begin{split}
\left\| \beta(\bar{U}_n)  \Delta W_n \gamma(\bar{U}_n) \right\|_2^2 
 = &\psi \left( \beta(\bar{U}_n)  \Delta W_n \gamma(\bar{U}_n)  \beta(\bar{U}_n)  \Delta W_n \gamma(\bar{U}_n) \right)\\
 =&  (\psi \left( \gamma(\bar{U}_n) \beta(\bar{U}_n) \right))^2 \cdot \psi \left( (\Delta W_n )^2 \right)+ \psi \left( (\gamma(\bar{U}_n) \beta(\bar{U}_n) )^2 \right) \cdot (\psi \left( \Delta W_n \right))^2\\
&- (\psi \left( \gamma(\bar{U}_n) \beta(\bar{U}_n) \right))^2 \cdot (\psi \left( \Delta W_n \right))^2.
\end{split}
\end{equation}
Note that 
\begin{equation*}
\psi \left( (\Delta W_n )^2 \right) = \left\| \Delta W_n  \right\|_2^2 = \left\| \int_{t_n}^{t_{n+1}} {\rm d} W_s \right\|_2^2 =  \int_{t_n}^{t_{n+1}} 1 {\rm d s}  = h,
\end{equation*}
and $\psi \left( \Delta W_n \right)=0.$ It implies that
\begin{equation*}
\begin{split}
\left\| \beta(\bar{U}_n)  \Delta W_n \gamma(\bar{U}_n) \right\|_2^2 =  (\psi \left( \gamma(\bar{U}_n) \beta(\bar{U}_n) \right))^2 h.
\end{split}
\end{equation*}

Define an operator-valued function $\zeta: U \mapsto \psi \left( \gamma(U) \beta(U) \right)\cdot{\bf 1},$ where ${\bf 1}$ is the identity operator in $\mathscr{F}$. For any $U, V \in \mathscr{F}^{sa}$ and $\left\| U\right\|_2, \left\| V \right\|_2 \leq M,$ 
\begin{equation*}
\begin{split}
\|\psi \left( \gamma(U) \beta(U) \right)&\cdot{\bf 1}-\psi \left( \gamma(V) \beta(V) \right)\cdot{\bf 1}\|_2=|\psi \left( \gamma(U) \beta(U) \right)-\psi \left( \gamma(V) \beta(V) \right)|\\
&\leq |\psi \left((\gamma(U)-\gamma(V)) \beta(U) \right)|+|\psi \left( \gamma(V) (\beta(U)-\beta(V)) \right)|\\
&\leq \|\gamma(U)-\gamma(V)\|_2\|\beta(U)\|_2+\|\gamma(V)\|_2\|\beta(U)-\beta(V)\|_2\\
&\leq (L_{\beta}(M)K_{\gamma}(M)+L_{\gamma}(M)K_{\beta}(M))(1+M)\|U-V\|_2\\
&=:L_{\beta\gamma}(M)\|U-V\|_2,
\end{split}
\end{equation*}
where we have used the non-commutative Cauchy-Schwarz inequality \cite{PX2003} for the second inequality. Hence, $\zeta$ is locally operator 
Lipschitz in the $L^2(\psi)$ norm. Then there exists a constant $K'_{\beta\gamma}(M)>0$, only depending on $L_{\beta\gamma}(M)$ and $U_0,$ such that for any $U\in \mathscr{F}^{sa}$ and $\left\| U\right\|_2 \leq M,$
\begin{equation*}
(\psi \left( \gamma(U) \beta(U) \right))^2 = \left \|  \psi \left( \gamma(U) \beta(U) \right)\cdot{\bf 1}  \right \|_2^2   \le K'_{\beta\gamma}(M) \left(1+\left \| U\right \|_2^2 \right),
\end{equation*}
which completes our proof.
\end{proof}

\begin{Prop}\label{prop:uniform-bound}
Fix $0< C_0 <1.$  For $0 < \theta \leq1,$ suppose that Assumption \ref{ass0}, \ref{ass1} hold. Define $\tilde{A}= A+2,$ where $A = \sup_{t \in [0, T]} \| U_t \|_2^2.$ Denote 
$K' _0: = \max\{ K'_\alpha, K'_{\beta \gamma}(\tilde{A}) \}.$  Then there exists a  
time point $0< \tilde{T} \leq T,$ such that 
\begin{equation}
\sup_{t \in [0, \tilde{T}]}\left\| \bar{U}_t \right\|_2^2 \leq \tilde{A}
\end{equation}
whenever $0<h \leq \frac{C_0}{\theta (1+ K'_0)}.$
\end{Prop}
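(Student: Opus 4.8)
The strategy is to estimate $\|\bar U_{n+1}\|_2^2$ in terms of $\|\bar U_n\|_2^2$ directly from the defining recursion \eqref{eqtheta}, and then set up an induction showing that $\|\bar U_n\|_2^2 \le \tilde A$ as long as $t_n$ stays below a suitable $\tilde T$. The natural starting point is to move the implicit term to the left: writing \eqref{eqtheta} as
\begin{equation*}
\bar U_{n+1} - \theta\alpha(\bar U_{n+1})h = \bar U_n + (1-\theta)\alpha(\bar U_n)h + \beta(\bar U_n)\Delta W_n\gamma(\bar U_n),
\end{equation*}
I would take $\|\cdot\|_2$ of both sides. The right-hand side is controlled using the triangle inequality, the linear growth bound \eqref{lin} for $\alpha$ (recall $K'_\alpha$ enters through $K'_0$), and crucially the free $L^2(\psi)$-isometry \eqref{eq:isometry} applied to the stochastic increment. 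For a single increment $\beta(\bar U_n)\Delta W_n\gamma(\bar U_n)$ over $[t_n,t_{n+1}]$, the isometry gives $\|\beta(\bar U_n)\Delta W_n\gamma(\bar U_n)\|_2^2 = h\,\|\beta(\bar U_n)\|^2\,\|\gamma(\bar U_n)\|^2$ (using that $\bar U_n$ is $\mathscr W_{t_n}$-adapted so the coefficients are constant over the step), which is bounded by $h\,K'_{\beta\gamma}(\tilde A)(1+\|\bar U_n\|_2^2)$ under the induction hypothesis $\|\bar U_n\|_2\le\sqrt{\tilde A}$; here the growth bounds \eqref{lin-3} for $\beta,\gamma$ localized at level $M=\sqrt{\tilde A}$ are exactly what is needed.

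\emph{Handling the implicit left-hand side.} The main subtlety is extracting a lower bound for $\|\bar U_{n+1}-\theta\alpha(\bar U_{n+1})h\|_2$ in terms of $\|\bar U_{n+1}\|_2$, since this is what lets me solve for $\|\bar U_{n+1}\|_2$. I would invoke the uniformly monotone map $f(x)=x-\theta h\alpha(x)$ already introduced in the Remark: for $h<1/(\theta L_\alpha)$, $f$ is invertible and $f^{-1}$ is Lipschitz, so $\|f^{-1}(V)\|_2$ is controlled by $\|V\|_2$ up to a factor like $(1-\theta L_\alpha h)^{-1}$. Equivalently, combining the reverse triangle inequality with \eqref{lin} gives $\|\bar U_{n+1}-\theta\alpha(\bar U_{n+1})h\|_2 \ge (1-\text{const}\cdot\theta h)\|\bar U_{n+1}\|_2 - \theta h\,\text{const}$. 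The condition $h\le C_0/(\theta(1+K'_0))$ with $C_0<1$ is designed precisely so that this coefficient stays bounded away from zero; the appearance of $\theta$ in the denominator and the constant $C_0<1$ both come from keeping the implicit contraction under control.

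\emph{Closing the induction.} Putting the two bounds together yields a recursion of the schematic form $\|\bar U_{n+1}\|_2^2 \le (1 + Ch)\|\bar U_n\|_2^2 + Ch$ for a constant $C$ depending on $K'_0$, $L_\alpha$, and $C_0$. A discrete Gronwall argument then gives $\|\bar U_n\|_2^2 \le (\|U_0\|_2^2 + 1)e^{C t_n} - 1$ for $t_n\le\tilde T$. Finally I would choose $\tilde T\le T$ small enough (depending on $A$ and $C$) that the right-hand side does not exceed $\tilde A = A+2$; since $A\ge\|U_0\|_2^2$ and the bound equals $\|U_0\|_2^2$ at $t=0$ with a continuous exponential growth, such a $\tilde T>0$ exists. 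I expect the principal obstacle to be the careful bookkeeping on the implicit term: one must verify that the constraint $h\le C_0/(\theta(1+K'_0))$ genuinely guarantees $h<1/(\theta L_\alpha)$ (so that $f^{-1}$ exists) and simultaneously produces a clean, $\theta$-uniform coefficient in the Gronwall recursion, rather than introducing a step-size-dependent blow-up. Squaring inequalities and absorbing cross terms via Young's inequality $2ab\le\epsilon a^2+\epsilon^{-1}b^2$ will be the routine-but-delicate part, but the conceptual core is the interplay between the isometry \eqref{eq:isometry} on the noise term and the monotonicity of $f$ on the drift.
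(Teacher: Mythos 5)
Your plan follows the paper's proof essentially step for step: isolate the implicit term on the left of \eqref{eqtheta}, lower-bound it, upper-bound the right-hand side using freeness of $\Delta W_n$ (which kills the cross terms and gives the factor $\|\Delta W_n\|_2^2=h$) together with the growth bounds localized at level $\tilde A$, then close with a discrete Gronwall recursion and a choice of $\tilde T$ independent of $h$ via a bootstrap on the first time the bound $\tilde A$ is exceeded. The only divergence is in the implicit term: the paper expands $\|\bar U_{n+1}-\theta h\alpha(\bar U_{n+1})\|_2^2$ and applies the Cauchy--Schwarz inequality together with the quadratic growth bound \eqref{lin-1}, which yields the coefficient $1-\theta h(1+K'_0)\ge 1-C_0>0$ exactly under the stated step-size restriction; your reverse-triangle variant also works (one may take $K_\alpha=\sqrt{K'_\alpha}$, so that $\theta h K_\alpha\le C_0\sqrt{K'_0}/(1+K'_0)\le C_0/2$), but the alternative route through the Lipschitz inverse $f^{-1}$ would require $h<1/(\theta L_\alpha)$, which the hypothesis $h\le C_0/(\theta(1+K'_0))$ does not guarantee since $L_\alpha$ and $K'_0$ are not comparable in general, so that alternative should be dropped.
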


\begin{proof}
Since $\left\| U_0\right\|_2^2 \leq A < \tilde{A},$ there exists a time point $T^{(h)} \in (0, T],$ depending on $h$, such that 
$\| \bar{U}_t \|_2^2 \leq \tilde{A}$ for all $t \in [0, T^{(h)}].$ Consider a partition 
\begin{equation*}
 0=t_0\le t_1\le \cdots\le t_{n_0}=T^{(h)} \leq t_{n_0+1} \le \cdots \le t_P = T. 
 \end{equation*} 
 Without losing generality, we assume that $\|\bar{U}_{t_{n_0+1}}\|_2^2 >\tilde{A}.$

For $n=0, 1, \ldots, n_0,$ moving the term $\theta\alpha(\bar{U}_{n+1})  h$ in \eqref{eqtheta} to the left hand side, we obtain 
\begin{equation}\label{eq:0}
\bar{U}_{n+1}- \theta\alpha(\bar{U}_{n+1})  h =\bar{U}_n+(1-\theta)\alpha(\bar{U}_{n})  h+ \beta(\bar{U}_n)  \Delta W_n \gamma(\bar{U}_n).
\end{equation} 

On one hand, thanks to the non-commutative Cauchy-Schwarz inequality \cite{PX2003}, we have
\begin{equation*}
2\psi \left( \bar{U}_{n+1} \alpha(\bar{U}_{n+1})\right) \leq 2 \left\| \bar{U}_{n+1} \right\|_2 \cdot \left\|\alpha(\bar{U}_{n+1}) \right\|_2 \leq \left\| \bar{U}_{n+1} \right\|_2^2 + \left\|\alpha(\bar{U}_{n+1}) \right\|_2^2.
\end{equation*}
Then, it follows that
\begin{equation}\label{eq:2}
\begin{split}
\left\| \bar{U}_{n+1}- \theta\alpha(\bar{U}_{n+1})  h \right\|_2^2&  = \left\| \bar{U}_{n+1}\right\|_2^2 -2 \theta h \psi \left( \bar{U}_{n+1} \alpha(\bar{U}_{n+1})\right) + \theta^2 h^2 \left\| \alpha(\bar{U}_{n+1})\right\|_2^2\\
& \geq \left\| \bar{U}_{n+1}\right\|_2^2 -\theta h \left\|\alpha(\bar{U}_{n+1}) \right\|_2^2 - \theta h \left\| \bar{U}_{n+1}\right\|_2^2\\
& \geq \left(1- \theta h - K'_0 \theta h \right) \left\| \bar{U}_{n+1}\right\|_2^2 - K'_0 \theta h.
\end{split}
\end{equation}
On the other hand, 
\begin{equation}\label{eq:1}
\begin{split}
& \left\| \bar{U}_n+(1-\theta)\alpha(\bar{U}_{n})  h+ \beta(\bar{U}_n)  \Delta W_n \gamma(\bar{U}_n) \right\|_2^2\\
&  = \left\| \bar{U}_n\right\|_2^2  + (1-\theta)^2 h^2 \left\| \alpha(\bar{U}_n)\right\|_2^2 + \left\| \beta(\bar{U}_n)  \Delta W_n \gamma(\bar{U}_n) \right\|_2^2 + 2 (1-\theta) h \psi \left( \bar{U}_n \alpha( \bar{U}_n) \right) \\
&  \quad+ 2\psi \left( \bar{U}_n \beta (\bar{U}_n) \Delta W_n \gamma(\bar{U}_n)\right)  + 2(1-\theta) h \psi \left( \alpha(\bar{U}_n) \beta(\bar{U}_n)  \Delta W_n \gamma(\bar{U}_n) \right).
\end{split}
\end{equation}
Since $\bar{U}_n$ is freely independent from $\Delta W_n,$ we have 
\begin{equation*}
\psi \left( \bar{U}_n \beta (\bar{U}_n) \Delta W_n \gamma(\bar{U}_n)\right) = \psi \left(\bar{U}_n \beta (\bar{U}_n)  \gamma(\bar{U}_n)\right) \cdot \psi \left( \Delta W_n\right)=0,
\end{equation*}
where we have used $\psi \left( \Delta W_n \right)=0.$ Similarly, we have $ \psi \left( \alpha(\bar{U}_n) \beta(\bar{U}_n)  \Delta W_n \gamma(\bar{U}_n) \right)=0.$ 
Putting the above estimations together and using Lemma \ref{productlocal}, for $n=0, 1, \ldots, n_0,$
{\small\begin{equation}\label{eq:3}
\begin{split}
\eqref{eq:1}& \leq \left\| \bar{U}_n\right\|_2^2  + (1-\theta)^2 h^2 \left\| \alpha(\bar{U}_n)\right\|_2^2 +  K'_0 \left(1+ \left\| \bar{U}_n\right\|_2^2\right) h + 2 (1-\theta) h \psi \left( \bar{U}_n \alpha( \bar{U}_n) \right)\\
&\leq \left\| \bar{U}_n\right\|_2^2  + (1-\theta)^2 h^2 \left\| \alpha(\bar{U}_n)\right\|_2^2 + K'_0 \left(1+ \left\| \bar{U}_n\right\|_2^2\right) h+ (1-\theta) h \left\| \bar{U}_n\right\|_2^2+ (1-\theta) h \left\|\alpha( \bar{U}_n) \right\|_2^2\\
& \leq f\left(\theta, h, K'_0\right) + \left[ 1+ g\left(\theta, h, K'_0\right)\right] \cdot \left\| \bar{U}_n \right\|_2^2,
\end{split}
\end{equation}}
where we denote $f\left(\theta, h, K'_0 \right):= \left[ 1+(1-\theta) +(1-\theta)^2 h \right] K'_0 h$ and $g\left(\theta, h, K'_0\right):=(1-\theta)h + f\left(\theta, h, K'_0\right).$ Combining \eqref{eq:0}, \eqref{eq:2}, and \eqref{eq:3}, we obtain (note that $h \leq \frac{C_0}{\theta (1+ K'_0)} < \frac{1}{\theta (1+ K'_0)} $)
\begin{equation*}
\left(1- \theta h - K'_0 \theta h \right) \left\| \bar{U}_{n+1}\right\|_2^2 - K'_0 \theta h \leq f\left(\theta, h, K'_0\right) + \left[ 1+ g\left(\theta, h, K'_0\right)\right] \cdot \left\| \bar{U}_n \right\|_2^2,
\end{equation*}
which is equivalent to 
\begin{equation*}
\begin{split}
\left\| \bar{U}_{n+1}\right\|_2^2 & \leq \frac{f\left(\theta, h, K'_0\right) +K'_0\theta h}{1- \theta h - K'_0 \theta h } + \frac{1+ g\left(\theta, h, K'_0\right)}{1- \theta h - K'_0 \theta h } \cdot \left\| \bar{U}_n \right\|_2^2\\
& =\bar{f} \left(\theta, h, K'_0\right) h + \left( 1+ \bar{g}\left(\theta, h, K'_0\right) h \right) \cdot \left\| \bar{U}_n \right\|_2^2
\end{split}
\end{equation*}
for $n=0, 1, \ldots, n_0,$ where we denote
\begin{equation*}
\bar{f} \left(\theta, h, K'_0\right):=  \frac{ \left[ 2+(1-\theta)^2 h \right] K'_0}{1- \theta h - K'_0 \theta h } \leq  \frac{ \left[ 2+(1-\theta)^2 T \right] K'_0}{1-C_0}=:B_1
\end{equation*}
and
\begin{equation*}
\bar{g} \left(\theta, h, K'_0 \right):= \frac{1+ \left[ 2+(1-\theta)^2 h \right] K'_0}{1- \theta h - K'_0 \theta h } \leq \frac{1+ \left[ 2+(1-\theta)^2 T \right] K'_0}{1- C_0} =: B_2.
\end{equation*}

Note that $B_1,B_2$ are independent of $h$ and $0\leq B_1\leq B_2.$ The discrete Gronwall inequality implies that for any $n \geq 0$
\begin{equation*}
\begin{split}
\left\| \bar{U}_{n} \right\|_2^2 &\leq \left( \left\| U_0\right\|_2^2 + \frac{B_1}{B_2}\right) \cdot e^{B_2 n h} - \frac{B_1}{B_2} \leq \left( 1+ \left\| U_0\right\|_2^2\right) \cdot e^{B_2 n h} \leq (1+A) \cdot e^{B_2 n h}.
\end{split}
\end{equation*}
Since $\bar{U}_t$ is a piecewise constant process, we have $\bar{U}_t = \bar{U}_{n}$ for $t \in [t_{n}, t_{n+1}).$ So we arrive at
\begin{equation*}
\left\| \bar{U}_t \right\|_2^2\leq (1+A) \cdot e^{B_2 t_n}\leq(1+A) \cdot e^{B_2 t},
\end{equation*}
for $t \in [t_n, t_{n+1}).$ Therefore, we have 
\begin{equation*}
\left\| \bar{U}_t \right\|_2^2\leq (1+A) \cdot e^{B_2 t},
\end{equation*}
for $t \in [0, t_{n_0+1}].$
Since $1+A < \tilde{A},$ we define $\tilde{T}: = \frac{1}{B_2} \ln\left( \frac{\tilde{A}}{1+A}\right)>0.$ Then we must have $\tilde{T} < t_{n_0+1}.$ Thus, $\sup_{t \in [0, \tilde{T}]}\| \bar{U}_t\|_2^2 \leq \tilde{A}$ for any $h \leq \frac{C_0}{\theta (1+ K'_0)}.$
\end{proof}


\subsection{Strong convergence of the free STMs}\label{section:strongconvergence}

To establish the strong convergence of the free STMs, we first give the following definition of strong convergence of a numerical method which produces the numerical solution $\{\bar{U}_n\}_{n=0,1, \ldots, P}$:
\begin{Defi}[Definition 6.1, \cite{schluechtermannNumericalSolutionFree2022a}]
The numerical approximation is said to strongly converge to the solution $\{U_t\}_{t \in [0, T]}$ to the free SDEs \eqref{eq:fSDE} with 
order $p>0$ if there exists a constant $C>0$ independent of $h,$ such that
\begin{equation}\label{eq:strong-cov}
\psi \left( \left| U_{t_n}-\bar{U}_n\right| \right) \leq C h^p
\end{equation}
for $n=0, 1, \ldots, P.$ $U_{t_n}$ denotes the solution $U_t$ evaluated at time point $t_n.$
\end{Defi}

In this subsection, we will prove the strong convergence order of the free STMs \eqref{eqtheta}, namely, we have the following theorem:
\begin{Theo}\label{thm:converges-stm}
Suppose that Assumption \ref{ass0}, \ref{ass1} hold. Then, there exists a constant $A'>0$, does not depend on $h,$ such that $\sup_n \|\bar{U}_n\|_2^2 \leq A'$ whenever the step size $h$ is sufficiently small. Moreover, the free STMs \eqref{eqtheta} strongly converge to the solution $\{U_t\}_{t \in [0, T]}$ to the free SDEs \eqref{eq:fSDE} with order $p=1/2$ as $h \to 0,$ i.e., there exists a constant $C>0$ independent of $h,$ such that
\begin{equation}\label{slj}
\|U_{t_n}-\bar{U}_n\|_2 \leqslant C h^{1/2}~\text{as}~h\to 0
\end{equation}
for $n =0, 1, \ldots, P.$
\end{Theo}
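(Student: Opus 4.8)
The plan is to establish the two assertions together through a continuation (bootstrap) argument, since the local-Lipschitz nature of $\beta,\gamma$ means the growth constants in \eqref{lin-3} depend on an a priori $L^2(\psi)$-bound, and that bound is exactly what the first assertion provides. I would set $e_n := U_{t_n}-\bar U_n$ and aim to prove the recursive mean-square estimate $\|e_{n+1}\|_2^2 \le (1+Ch)\|e_n\|_2^2 + Ch^2$ on any time interval on which $\sup_n\|\bar U_n\|_2$ is controlled, with $C$ depending only on the local constants $L_M, K_M, K'_M$ and on $T$ (not on $h$). A discrete Gronwall inequality together with $e_0=0$ then yields $\|e_n\|_2^2 \le Ch^2\sum_k e^{Ct_k}\le C'h$, i.e. the order-$1/2$ estimate \eqref{slj}; the $L^1$-form in the definition of strong convergence follows from $\psi(|X|)\le\|X\|_2$.

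To obtain the recursion I would subtract the integral form \eqref{eq:solution} over $[t_n,t_{n+1}]$ from the scheme \eqref{eqtheta}. Writing $\alpha(U_{t_n})h=(1-\theta)\alpha(U_{t_n})h+\theta\alpha(U_{t_n})h$ and inserting $\pm\theta\alpha(U_{t_{n+1}})h$, the implicit contribution is collected on the left as $\hat e_{n+1}:=e_{n+1}-\theta h[\alpha(U_{t_{n+1}})-\alpha(\bar U_{n+1})]$; operator Lipschitzness of $\alpha$ gives $\|e_{n+1}\|_2\le(1-\theta h L_\alpha)^{-1}\|\hat e_{n+1}\|_2$, so it suffices to bound $\|\hat e_{n+1}\|_2^2$ for $h$ small. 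Expanding this square, the drift produces the term $\|e_n\|_2^2$, a controllable cross term $2(1-\theta)h\,\psi(e_n[\alpha(U_{t_n})-\alpha(\bar U_n)])\le Ch\|e_n\|_2^2$, and consistency remainders $R_n^{\mathrm{drift}}$ of size $O(h^{3/2})$ in $\|\cdot\|_2$, the latter estimated using $\|U_s-U_{t_n}\|_2=O((s-t_n)^{1/2})$ (a consequence of the isometry \eqref{eq:isometry} applied to \eqref{eq:solution}). The diffusion difference $D_n:=\int_{t_n}^{t_{n+1}}\beta(U_s)\,\mathrm{d}W_s\,\gamma(U_s)-\beta(\bar U_n)\Delta W_n\gamma(\bar U_n)$ is handled by the free $L^2(\psi)$-isometry \eqref{eq:isometry}: its variance splits into a leading part $\le Ch\|e_n\|_2^2$ (after triangulating through $\beta(U_{t_n})\Delta W_n\gamma(U_{t_n})$ and using local Lipschitzness) and an $O(h^2)$ part from freezing the integrand. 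The crucial structural point is that all cross terms pairing $D_n$ (or the forward increment $\Delta W_n$) with $\mathscr{W}_{t_n}$-measurable quantities vanish in the trace, exactly as the terms $\psi(\bar U_n\beta(\bar U_n)\Delta W_n\gamma(\bar U_n))=0$ do in the proof of Proposition \ref{prop:uniform-bound}; this is where freeness of the increments and the rules \eqref{eq:free-ito} are used.

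For the uniform bound and to close the loop, I would run a continuation argument anchored on Proposition \ref{prop:uniform-bound}. Let $\tilde A=A+2$ and let $[0,\tau]$ be the maximal interval on which $\|\bar U_t\|_2^2\le\tilde A$; on this interval the constant $C$ above is frozen at level $M=\sqrt{\tilde A}$ and is therefore independent of $h$, so the Gronwall step gives $\|\bar U_n\|_2\le\|U_{t_n}\|_2+\|e_n\|_2\le\sqrt{A}+Ch^{1/2}$. Since $\sqrt{A}+Ch^{1/2}<\sqrt{\tilde A}=\sqrt{A+2}$ for all sufficiently small $h$, the threshold $\tilde A$ is never attained, which forces $\tau=T$; this yields both $\sup_n\|\bar U_n\|_2^2\le\tilde A=:A'$ on all of $[0,T]$ and the global validity of \eqref{slj}. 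I expect the main obstacle to be precisely this coupling: because $\beta,\gamma$ are only locally operator Lipschitz, neither boundedness nor convergence can be proved in isolation, and one must verify that the convergence constant on $[0,\tau]$ genuinely depends only on the frozen level $\tilde A$ (hence not on $h$), and that the strict inequality $\sqrt{A}+Ch^{1/2}<\sqrt{\tilde A}$ indeed propagates the bound past $\tau$ rather than merely up to it.
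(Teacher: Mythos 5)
Your proposal is correct and follows essentially the same route as the paper: the $1/2$-H\"older continuity of $U_t$ via the isometry \eqref{eq:isometry}, a one-step error decomposition into a drift consistency remainder of size $O(h^{3/2})$ plus a martingale difference whose cross terms with $\mathscr{W}_{t_n}$-adapted quantities vanish in the trace, a discrete Gronwall step, and a bootstrap that couples the uniform bound of Proposition \ref{prop:uniform-bound} with the convergence estimate to propagate past $\tilde{T}$. The only (harmless) variations are that you absorb the implicit term via the reverse triangle inequality $\|\hat e_{n+1}\|_2\ge(1-\theta hL_\alpha)\|e_{n+1}\|_2$ where the paper instead keeps the $\|\varepsilon_{n+1}\|_2^2$ contributions on the right and divides through, and that you phrase the continuation as a single maximality argument rather than the paper's explicit iteration over intervals of length $\tilde{T}$.
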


We emphasize that \eqref{slj} is a stronger result compared to \eqref{eq:strong-cov} for $p= 1/2$. Since by the non-commutative Cauchy-Schwarz inequality \cite{PX2003}, we have
\begin{equation*}
\psi \left( \left| U_{t_n}-\bar{U}_n\right| \right) \leq \psi({\bf 1}^2)^{\frac{1}{2}} \cdot \|U_{t_n}-\bar{U}_n\|_2 =\|U_{t_n}-\bar{U}_n\|_2,
\end{equation*}
where ${\bf 1}$ is the identity operator in $\mathscr{F}$.

To show this theorem, we introduce the following intermediate terms, which are the values obtained after just one step of free STMs \eqref{eqtheta}:
\begin{equation}\label{uyibu}
\bar{U}(t_{n+1}):=U_{t_n} +\theta \alpha\left(U_{t_{n+1}}\right)  h+(1-\theta) \alpha\left(U_{t_n}\right)  h+\beta\left(U_{t_n} \right)  \Delta W_n \gamma\left(U_{t_n} \right)
		\end{equation} 
for $n=0,1,\ldots, P-1.$
Moreover, define the local truncation error 
 \begin{equation}\label{d}
		\delta_{n}:=U_{t_{n}} -\bar{U}(t_{n}), 
\end{equation}  
and the global error 
\begin{equation}\label{v}
		\varepsilon _n:=U_{t_n} -\bar{U}_n. 
		\end{equation}
Hence, proving the above theorem reduces to bounding the global error. We need the following technical lemmas: 

\begin{Lem}\label{prop:holder}
Suppose that Assumption \ref{ass0}, \ref{ass1} hold. Denote $\sup_{t \in [0, T]} \|U_t\|_2^2 \leq A.$ Then, there exists a constant $ C_{\ref{prop:holder}}>0,$ depending on $K_A, T,$ and $A$, such that for any $t, t' \in [0, T]$
\begin{equation}\label{hlianxu}
		\|U_t -U_{t'} \|^2_2 \le C_{\ref{prop:holder}} |t-t'|. 
\end{equation}  
In this sense, the solution $\{U_t\}_{t \in [0, T]}$ to the free SDEs \eqref{eq:fSDE} is $1/2$-H\"older continuous.
\end{Lem}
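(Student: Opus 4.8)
The plan is to prove the 1/2-Hölder continuity of the exact solution directly from the integral representation~\eqref{eq:solution}, bounding the deterministic drift term and the stochastic term separately. Without loss of generality assume $t' < t$; then subtracting the integral equations at times $t$ and $t'$ gives
\begin{equation*}
U_t - U_{t'} = \int_{t'}^{t} \alpha(U_s)\, {\rm d}s + \int_{t'}^{t} \beta(U_s)\, {\rm d}W_s\, \gamma(U_s).
\end{equation*}
I would then take the $L^2(\psi)$-norm of both sides and use the elementary inequality $\|X+Y\|_2^2 \le 2\|X\|_2^2 + 2\|Y\|_2^2$ to split the estimate into a drift contribution and a diffusion contribution. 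The overall goal is to bound each of the two contributions by a constant times $|t-t'|$.

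For the drift term, I would apply the $L^2(\psi)$ version of the Cauchy--Schwarz (or Jensen) inequality for the time integral to get
\begin{equation*}
\left\| \int_{t'}^{t} \alpha(U_s)\, {\rm d}s \right\|_2^2 \le |t-t'| \int_{t'}^{t} \|\alpha(U_s)\|_2^2\, {\rm d}s,
\end{equation*}
and then invoke the linear-growth bound~\eqref{lin-1} together with the uniform bound $\sup_{s}\|U_s\|_2^2 \le A$ to control $\|\alpha(U_s)\|_2^2$ by $K'_\alpha(1+A)$. This produces a bound of the form $K'_\alpha(1+A)\,|t-t'|^2 \le K'_\alpha(1+A)\,T\,|t-t'|$. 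For the diffusion term, the key tool is the free $L^2(\psi)$-isometry~\eqref{eq:isometry}, which gives exactly
\begin{equation*}
\left\| \int_{t'}^{t} \beta(U_s)\, {\rm d}W_s\, \gamma(U_s) \right\|_2^2 = \int_{t'}^{t} \|\beta(U_s)\|_2^2 \cdot \|\gamma(U_s)\|_2^2\, {\rm d}s.
\end{equation*}
Here I would use the local operator Lipschitz / linear-growth bounds~\eqref{lin-3} for $\beta$ and $\gamma$ at scale $M = \sqrt{A}$, noting that the product $\beta(U_s)\gamma(U_s)$ (or rather the product of the two norm bounds) is controlled by $K_A^2(1+\sqrt{A})^2$ uniformly in $s$, yielding a bound proportional to $|t-t'|$. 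Collecting both contributions gives $\|U_t - U_{t'}\|_2^2 \le C_{\ref{prop:holder}} |t-t'|$ with $C_{\ref{prop:holder}}$ depending only on $K_A$, $T$, and $A$.

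The isometry identity~\eqref{eq:isometry} is what makes the diffusion estimate clean, so most of the care goes into the drift and into verifying that the uniform norm bound on $\{U_s\}$ is legitimately available. I expect the main subtlety — rather than a genuine obstacle — to be the bookkeeping around the operator-norm versus $L^2(\psi)$-norm bounds: the isometry~\eqref{eq:isometry} requires the integrand norms $\|\beta(U_s)\| \cdot \|\gamma(U_s)\|$ to be appropriately integrable, and one must confirm that the growth condition~\eqref{lin-3} (stated in the $L^2(\psi)$ norm) suffices to invoke~\eqref{eq:isometry} as written, which implicitly uses the $L^2(\psi)$ norms of the integrands under Assumption~\ref{ass1}. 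Assuming this is handled by the standing framework, the two estimates combine immediately and the stated Hölder bound follows, so no induction or iteration is needed here.
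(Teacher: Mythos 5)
Your proposal is correct and follows essentially the same route as the paper: the same decomposition of $U_t-U_{t'}$ into drift and diffusion parts, the free $L^2(\psi)$-isometry \eqref{eq:isometry} for the stochastic integral, and the linear-growth consequences of Assumption \ref{ass1} together with $\sup_t\|U_t\|_2^2\le A$ to absorb everything into a constant times $|t-t'|$. The only cosmetic difference is that the paper bounds the drift via $\bigl(\int_{t'}^{t}\|\alpha(U_s)\|_2\,{\rm d}s\bigr)^2$ using \eqref{lin} rather than your Cauchy--Schwarz step with \eqref{lin-1}; both yield the same $O(|t-t'|^2)\le T\,O(|t-t'|)$ estimate.
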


\begin{proof}
Without losing of generality, we assume $ 0 \leq t' < t \leq T.$ Recalling \eqref{eq:solution}, we write  
		\begin{equation*}
		U_t -U_{t'}  = \int_{t'}^t \alpha\left(U_s \right)  {\rm ds} +\int_{t'}^t \beta\left(U_s \right)  {\rm d} W_s \gamma\left(U_s \right). 
		\end{equation*}   
By the assumption and the free $L^2(\psi)$-isometry \eqref{eq:isometry}, we have 		
\begin{equation*}
\begin{split}
		\left\|U_t -U_{t'} \right\|^2_2
		&\leq 2\left\|\int_{t'}^t \alpha\left(U_s \right)  {\rm ds} \right\|^2_2 +
		2\left\|\int_{t'}^t \beta\left(U_s \right)  {\rm d} W_s \gamma\left(U_s \right)  \right\|^2_2\\
		& \leq 2 \left( \int_{t'}^t \left\| \alpha\left(U_s \right) \right\|_2 {\rm ds}\right)^2 + 2 \int_{t'}^t \left\| \beta\left(U_s \right) \right\|^2_2  \cdot  \left\| \gamma\left(U_s \right) \right\|^2_2 {\rm ds}\\
		& \leq 2\left( \int_{t'}^t K_A(1+ \left\| U_s  \right\|_2 ){\rm ds}\right)^2+ 2  \int_{t'}^t K_A^4(1+ \left\| U_s  \right\|_2 )^4{\rm ds} \\
		& \leq (1+ \sqrt{A})^2K_A^2 (2 T+ 2(1+\sqrt{A})^2 K_A^2) (t-t'),		
		\end{split}
		\end{equation*}
		where we have used $\sup_{t \in [0, T]} \|U_t\|^2_2 \leq A.$ Letting $ C_{\ref{prop:holder}}:=(1+ \sqrt{A})^2K_A^2 (2 T+ 2(1+\sqrt{A})^2 K_A^2)$, we complete the proof. 
	\end{proof}
	
	We recall the definition of conditional expectation as follows \cite{PX2003}:
 Let $\mathscr{G}$ be a sub-algebra of $\mathscr{F}.$ There exists a map $\mathbb{E}_{\mathscr{G}}: \mathscr{F} \rightarrow \mathscr{G},$ such that
\begin{enumerate}[(i)]
\item $\mathbb{E}_{\mathscr{G}} (y) = y$ for all $y \in \mathscr{G};$
\item $\mathbb{E}_{\mathscr{G}} (y_1 x y_2) = y_1 \mathbb{E}_{\mathscr{G}} (x) y_2$ for all $x \in \mathscr{F}, y_1, y_2 \in \mathscr{G}.$
\end{enumerate}
$\mathbb{E}_{\mathscr{G}}$ is called the conditional expectation (with respect to $\mathscr{G}$).

\begin{Lem}\label{lem3. 3}
Suppose that Assumption \ref{ass0}, \ref{ass1} hold. There exists a constant $C_{\ref{lem3. 3}}>0,$ depending on $C_{\ref{prop:holder}}, K'_A,$ and $L_A,$ such that
\begin{equation}\label{delta}
		\| \delta_{n+1} \|_2^2 \leq C_{\ref{lem3. 3}} h^2, 
\end{equation}
for $n=0, 1, \ldots, P-1.$ Moreover, we denote $\mathbb{E}_n: = \mathbb{E}_{\mathscr{W}_{t_n}}$ be the conditional expectation onto the sub-algebra $\mathscr{W}_{t_n}$ for the time point $t_n$. Then, there exists a constant $c_{\ref{lem3. 3}}>0,$ depending on $C_{\ref{prop:holder}}$ and $L_A$, such that
\begin{equation}\label{delta-E}
		\left\| \mathbb{E}_n [\delta_{n+1} ] \right\|_2^2 \leq c_{\ref{lem3. 3}} h^3,
\end{equation}
for $n =0, 1, \ldots, P-1.$
\end{Lem}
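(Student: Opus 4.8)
The plan is to expand the local error $\delta_{n+1}=U_{t_{n+1}}-\bar U(t_{n+1})$ by writing $U_{t_{n+1}}$ through the integral form \eqref{eq:solution} over $[t_n,t_{n+1}]$ and subtracting the definition \eqref{uyibu} of $\bar U(t_{n+1})$. Using $\Delta W_n=\int_{t_n}^{t_{n+1}}{\rm d}W_s$, the two copies of $U_{t_n}$ cancel and $\delta_{n+1}=D_{n+1}+M_{n+1}$ splits into a deterministic drift part
\[
D_{n+1} := \int_{t_n}^{t_{n+1}} \big[\alpha(U_s) - \theta\alpha(U_{t_{n+1}}) - (1-\theta)\alpha(U_{t_n})\big]\,{\rm ds}
\]
and a stochastic diffusion part
\[
M_{n+1} := \int_{t_n}^{t_{n+1}} \big[\beta(U_s)\,{\rm d}W_s\,\gamma(U_s) - \beta(U_{t_n})\,{\rm d}W_s\,\gamma(U_{t_n})\big].
\]
I would then estimate the two pieces separately, using throughout the $1/2$-H\"older bound $\|U_s-U_{t'}\|_2^2\le C_{\ref{prop:holder}}|s-t'|$ from Lemma \ref{prop:holder} together with the linear growth estimates \eqref{lin-1} and \eqref{lin-3} and $\sup_t\|U_t\|_2^2\le A$.

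For $D_{n+1}$ I would rewrite the integrand as $\theta[\alpha(U_s)-\alpha(U_{t_{n+1}})]+(1-\theta)[\alpha(U_s)-\alpha(U_{t_n})]$, apply the $L^2(\psi)$ operator-Lipschitz property of $\alpha$ and H\"older continuity to get a pointwise bound of order $h^{1/2}$ on $[t_n,t_{n+1}]$, and integrate over an interval of length $h$ to obtain $\|D_{n+1}\|_2\le L_A\sqrt{C_{\ref{prop:holder}}}\,h^{3/2}$, so $\|D_{n+1}\|_2^2=O(h^3)$. For $M_{n+1}$ I would insert and subtract, splitting it as $\int_{t_n}^{t_{n+1}}[\beta(U_s)-\beta(U_{t_n})]\,{\rm d}W_s\,\gamma(U_s)$ plus $\int_{t_n}^{t_{n+1}}\beta(U_{t_n})\,{\rm d}W_s\,[\gamma(U_s)-\gamma(U_{t_n})]$ and apply the free $L^2(\psi)$-isometry \eqref{eq:isometry} to each. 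The isometry turns each integral into $\int_{t_n}^{t_{n+1}}\|\cdot\|_2^2\,\|\cdot\|_2^2\,{\rm ds}$; combining the local $L^2$-Lipschitz property of $\beta,\gamma$, H\"older continuity (contributing a factor $s-t_n$), and the uniform bounds on $\|\beta(U_{t_n})\|_2,\|\gamma(U_s)\|_2$ from \eqref{lin-3}, each integral is $O(h^2)$. Hence $\|M_{n+1}\|_2^2=O(h^2)$, and $\|\delta_{n+1}\|_2^2\le 2\|D_{n+1}\|_2^2+2\|M_{n+1}\|_2^2=O(h^2)$, which is \eqref{delta} with $C_{\ref{lem3. 3}}$ depending on $C_{\ref{prop:holder}},K'_A,L_A$.

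For the sharper conditional bound \eqref{delta-E}, the crucial observation is that $\mathbb{E}_n$ annihilates the diffusion part: I claim $\mathbb{E}_n[M_{n+1}]=0$. To prove this I would approximate the free stochastic integral $\int_{t_n}^{t_{n+1}}\beta(U_s)\,{\rm d}W_s\,\gamma(U_s)$ by its defining Riemann sums $\sum_j\beta(U_{s_j})(W_{s_{j+1}}-W_{s_j})\gamma(U_{s_j})$ with $s_j\ge t_n$ and apply the tower property $\mathbb{E}_n=\mathbb{E}_n\circ\mathbb{E}_{\mathscr{W}_{s_j}}$. Since $\beta(U_{s_j}),\gamma(U_{s_j})\in\mathscr{W}_{s_j}$, the bimodule property of the trace-preserving conditional expectation gives $\mathbb{E}_{\mathscr{W}_{s_j}}[\beta(U_{s_j})(W_{s_{j+1}}-W_{s_j})\gamma(U_{s_j})]=\beta(U_{s_j})\,\mathbb{E}_{\mathscr{W}_{s_j}}[W_{s_{j+1}}-W_{s_j}]\,\gamma(U_{s_j})$, and $\mathbb{E}_{\mathscr{W}_{s_j}}[W_{s_{j+1}}-W_{s_j}]=\psi(W_{s_{j+1}}-W_{s_j})\cdot 1=0$ because the increment is free from and centered against $\mathscr{W}_{s_j}$; every summand thus vanishes. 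Passing to the operator-norm limit and using that $\mathbb{E}_n$ is a norm contraction yields $\mathbb{E}_n[\int_{t_n}^{t_{n+1}}\beta(U_s)\,{\rm d}W_s\,\gamma(U_s)]=0$, while $\mathbb{E}_n[\beta(U_{t_n})\Delta W_n\gamma(U_{t_n})]=\beta(U_{t_n})\mathbb{E}_n[\Delta W_n]\gamma(U_{t_n})=0$ by the same reasoning, so $\mathbb{E}_n[M_{n+1}]=0$. Consequently $\mathbb{E}_n[\delta_{n+1}]=\mathbb{E}_n[D_{n+1}]$, and since conditional expectation is an $L^2(\psi)$-contraction, $\|\mathbb{E}_n[\delta_{n+1}]\|_2\le\|D_{n+1}\|_2=O(h^{3/2})$, giving \eqref{delta-E} with $c_{\ref{lem3. 3}}$ depending only on $C_{\ref{prop:holder}}$ and $L_A$. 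I expect the main obstacle to be precisely this martingale-type vanishing of $\mathbb{E}_n[M_{n+1}]$: there is no classical martingale theory to invoke, so the argument must be anchored in the defining Riemann sums and the bimodule/freeness properties of $\mathbb{E}_n$, with care taken that $\mathbb{E}_n$ is continuous under the operator-norm convergence that defines the free stochastic integral.
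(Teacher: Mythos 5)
Your proposal is correct and follows essentially the same route as the paper: the same decomposition of $\delta_{n+1}$ into drift and diffusion pieces, the same use of the free $L^2(\psi)$-isometry plus the H\"older bound of Lemma \ref{prop:holder} to get $O(h^3)$ for the drift and $O(h^2)$ for the diffusion, and the same observation that $\mathbb{E}_n$ kills the stochastic integrals so that only the $O(h^3)$ drift contribution survives in \eqref{delta-E}. The only cosmetic difference is that you establish $\mathbb{E}_n[M_{n+1}]=0$ directly from the defining Riemann sums and the bimodule property, whereas the paper simply cites the martingale property of free stochastic integrals from Biane--Speicher.
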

\begin{proof}
By a direct computation, we have
\begin{equation}\label{lem3. 2. 1}
		\begin{split}
		\delta_{n+1}=
		& \theta \int_{t_n}^{t_{n+1}} ( \alpha(U_s) -\alpha(U_{t_{n+1}}) ) {\rm ds}
		+ (1-\theta)\int_{t_n}^{t_{n+1}} ( \alpha(U_s) -\alpha(U_{t_{n}}) ) {\rm ds}
		\\ 
		&+\int_{t_n}^{t_{n+1}}  ( \beta(U_s) -\beta(U_{t_n}) )  {\rm d} W_s \gamma(U_s)+ \int_{t_n}^{t_{n+1}} \beta(U_{t_n}){\rm d} W_s (\gamma(U_s ) -\gamma(U_{t_n} ) ). 
		\end{split}
		\end{equation}
By using the free $L^2(\psi)$-isometry \eqref{eq:isometry}, 
{\small
\begin{equation}\label{j}
		\begin{split}
		\|\delta_{n+1}\|_2^2&\leq
		4\theta^2 \left\| \int_{t_n}^{t_{n+1}} ( \alpha(U_s) -\alpha(U_{t_{n+1}}) ) {\rm ds} \right\|_2^2
		+ 4(1-\theta)^2 \left\| \int_{t_n}^{t_{n+1}} ( \alpha(U_s) -\alpha(U_{t_{n}}) ) {\rm ds}\right\|_2^2
		\\
		&\quad + 4\left\| \int_{t_n}^{t_{n+1}}  ( \beta(U_s) -\beta(U_{t_n}) )  {\rm d} W_s\gamma(U_s) \right\|_2^2 + 4\left\| \int_{t_n}^{t_{n+1}}\beta(U_{t_n}) {\rm d} W_s (\gamma(U_s ) -\gamma(U_{t_n} ) ) \right\|_2^2\\
		&\leq 4 \theta^2 \left( \int_{t_n}^{t_{n+1}} \left\|  \alpha(U_s) -\alpha(U_{t_{n+1}})  \right\|_2 {\rm ds}\right)^2
		+ 4(1-\theta)^2  \left( \int_{t_n}^{t_{n+1}} \left\| \alpha(U_s) -\alpha(U_{t_{n}}) \right\|_2 {\rm ds}\right)^2
		\\
		&\quad + 4\int_{t_n}^{t_{n+1}}  \left\| \beta(U_s) -\beta(U_{t_n}) \right\|^2_2 \cdot \left\|\gamma(U_s)\right\|_2^2 {\rm ds}  +  4 \int_{t_n}^{t_{n+1}}  \left\| \gamma(U_s ) -\gamma(U_{t_n} ) \right\|^2_2 \cdot \left\| \beta(U_{t_n})\right\|_2^2 {\rm d s}.
		\end{split}
		\end{equation} }
Then, by Lemma \ref{prop:holder}, 
\begin{equation*}
\begin{split}
\int_{t_n}^{t_{n+1}} \left\|  \alpha(U_s) -\alpha(U_{t_{n+1}})  \right\|_2 {\rm ds} & \leq L_A\left( \int_{t_n}^{t_{n+1}} \left\| U_s - U_{t_{n+1}} \right\|^2_2 {\rm ds}\right)^{1/2} \cdot \sqrt{h}  \\
& \leq  L_A\left( \int_{t_n}^{t_{n+1}} C_{\ref{prop:holder}} |s- t_{n+1}|{\rm ds} \right)^{1/2} \cdot \sqrt{h}\\
&= \frac{ \sqrt{C_{\ref{prop:holder}}}}{\sqrt{2}} L_A h^{3/2}. 
\end{split}
\end{equation*}
Moreover, 
\begin{equation*}
\begin{split}
\int_{t_n}^{t_{n+1}}  \left\| \beta(U_s) -\beta(U_{t_n}) \right\|^2_2 \cdot \left\|\gamma(U_s)\right\|_2^2 {\rm ds}& \leq L_A^2 K'_A \int_{t_n}^{t_{n+1}}  \left\| U_s - U_{t_n} \right\|^2_2 \cdot (1+\left\| U_s\right\|_2^2) {\rm ds}  \\
& \leq L_A^2 K'_A(1+A) \int_{t_n}^{t_{n+1}}  C_{\ref{prop:holder}} |s-t_n| {\rm ds} \\
&= \frac{C_{\ref{prop:holder}}}{2} L_A^2 K'_A (1+A) h^2.
\end{split}
\end{equation*}
Similarly, 
\begin{equation*}
\int_{t_n}^{t_{n+1}}  \left\| \gamma(U_s ) -\gamma(U_{t_n} ) \right\|^2_2 \cdot \left\| \beta(U_{t_n})\right\|_2^2 {\rm d s} \leq \frac{C_{\ref{prop:holder}}}{2} L_A^2 K'_A (1+A) h^2.
\end{equation*}
Therefore,
\begin{equation*}
\begin{split}
	\|\delta_{n+1}\|_2^2 & \leq 2\left[ \theta^2+ (1-\theta)^2\right] C_{\ref{prop:holder}} L_A^2 h^3 + 4 C_{\ref{prop:holder}} L_A^2 K'_A (1+A) h^2\\
	& \leq \left\{2\left[ \theta^2+ (1-\theta)^2\right] T + 4K'_A (1+A) \right\} C_{\ref{prop:holder}} L_A^2 h^2.
	\end{split}
\end{equation*}
By letting $C_{\ref{lem3. 3}}: = \left\{2\left[ \theta^2+ (1-\theta)^2\right] T + 4K'_A (1+A) \right\} C_{\ref{prop:holder}} L_A^2,$ we prove the bound \eqref{delta} for $\|\delta_{n+1}\|_2^2$.

To prove the bound for $\left\| \mathbb{E}_n[\delta_{n+1}] \right\|_2^2,$ we note that
\begin{equation*}
\mathbb{E}_n \left[ \int_{t_n}^{t_{n+1}}  ( \beta(U_s) -\beta(U_{t_n}) )  {\rm d} W_s \gamma(U_s)+ \int_{t_n}^{t_{n+1}} \beta(U_{t_n}){\rm d} W_s (\gamma(U_s ) -\gamma(U_{t_n} ) )\right]=0.
\end{equation*}
This is due to the fact that
\begin{equation*}
t \rightarrow \int_0^t ( \beta(U_s) -\beta(U_{t_n}) )  {\rm d} W_s\gamma(U_s)~\text{and}~t \rightarrow \int_0^t  \beta(U_{t_n}) {\rm d} W_s ( \gamma(U_s) -\gamma(U_{t_n}) )
\end{equation*} 
are martingales with respect the filtration $\{\mathscr{W}_t\}_{t \geq 0}$; see \cite[Proposition 3.2.3]{bianeStochasticCalculusRespect1998}. Hence, it follows from \eqref{lem3. 2. 1} that
\begin{equation*}
\mathbb{E}_{n} \left[ \delta_{n+1}\right]= \theta \mathbb{E}_{n}\int_{t_n}^{t_{n+1}} ( \alpha(U_s) -\alpha(U_{t_{n+1}}) ) {\rm ds}
		+ (1-\theta) \mathbb{E}_n \int_{t_n}^{t_{n+1}} ( \alpha(U_s) -\alpha(U_{t_{n}}) ) {\rm ds}.
\end{equation*}
By the contractivity of conditional expectation \cite{PX2003}, we have 
\begin{equation*}
\begin{split}
\left\| \mathbb{E}_{n} \left[ \delta_{n+1}\right] \right\|_2^2 
& \leq \theta^2 \left\| \mathbb{E}_{n}\int_{t_n}^{t_{n+1}} ( \alpha(U_s) -\alpha(U_{t_{n+1}}) ) {\rm ds} 
\right\|_2^2 + (1-\theta)^2 \left\|  \mathbb{E}_n \int_{t_n}^{t_{n+1}} ( \alpha(U_s) -\alpha(U_{t_{n}}) ) {\rm ds}\right\|_2^2\\
& \leq \theta^2 \left\| \int_{t_n}^{t_{n+1}} ( \alpha(U_s) -\alpha(U_{t_{n+1}}) ) {\rm ds} 
\right\|_2^2 + (1-\theta)^2 \left\| \int_{t_n}^{t_{n+1}} ( \alpha(U_s) -\alpha(U_{t_{n}}) ) {\rm ds}\right\|_2^2\\
& \leq  \left[\theta^2 + (1-\theta)^2 \right] \frac{C_{\ref{prop:holder}}}{2}  L_A^2 h^3.
\end{split}
\end{equation*}
By letting $c_{\ref{lem3. 3}}: =\left[\theta^2 + (1-\theta)^2 \right] C_{\ref{prop:holder}} L_A^2/2$, we complete our proof.
\end{proof} 

\begin{Lem}\label{lem:u}
Suppose that Assumption \ref{ass0}, \ref{ass1} hold. Denote
{\small\begin{equation*}
\begin{split}
 u_n:=&\theta(\alpha(U_{t_{n+1}} ) -\alpha(\bar{U}_{n+1}) )  h+(1-\theta)(\alpha(U_{t_{n}} ) -\alpha(\bar{U}_{n}) )  h  +\beta(U_{t_{n}} )\Delta W_n \gamma (U_{t_n})-\beta(\bar{U}_n) \Delta W_n \gamma(\bar{U}_{n}).
\end{split}
\end{equation*}}
Without losing of generality, we assume that there exists $n_0$ such that $t_{n_0} = \tilde{T},$ where $\tilde{T}$ is given in Proposition \ref{prop:uniform-bound}.
Then, there exists a constant $C_{\ref{lem:u}} >0,$ depending on $L_{\tilde{A}}, K'_{\tilde{A}}, A,$ and $\tilde{A},$ such that
\begin{equation}
\|u_n\|_2^2 \leq C_{\ref{lem:u}} h \| \varepsilon_n\|_2^2 + 4 \theta^2 L^2_{\tilde{A}} h^2 \|\varepsilon_{n+1}\|_2^2
\end{equation}
and
\begin{equation}
\left\| \mathbb{E}_n [u_n] \right\|_2^2 \leq (1-\theta) L_{\tilde{A}} h \| \varepsilon_n\|_2 + \theta L_{\tilde{A}} h \|\varepsilon_{n+1}\|_2
\end{equation}
for $n=0, 1, \ldots, n_0,$ whenever $0<h \leq \frac{C_0}{\theta (1+ K'_0)}.$
\end{Lem}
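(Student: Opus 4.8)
The plan is to split $u_n$ into four additive pieces: the two drift differences $\theta(\alpha(U_{t_{n+1}})-\alpha(\bar U_{n+1}))h$ and $(1-\theta)(\alpha(U_{t_n})-\alpha(\bar U_n))h$, together with the two diffusion differences obtained by telescoping
\begin{equation*}
\beta(U_{t_n})\Delta W_n\gamma(U_{t_n})-\beta(\bar U_n)\Delta W_n\gamma(\bar U_n)=\bigl(\beta(U_{t_n})-\beta(\bar U_n)\bigr)\Delta W_n\,\gamma(U_{t_n})+\beta(\bar U_n)\,\Delta W_n\bigl(\gamma(U_{t_n})-\gamma(\bar U_n)\bigr).
\end{equation*}
Each diffusion piece has the form $B\,\Delta W_n\,G$ with $B,G\in\mathscr{W}_{t_n}$ constant on $[t_n,t_{n+1}]$, so writing $B\,\Delta W_n\,G=\int_{t_n}^{t_{n+1}}B\,{\rm d} W_s\,G$ and invoking the free $L^2(\psi)$-isometry \eqref{eq:isometry} gives
\begin{equation*}
\|B\,\Delta W_n\,G\|_2^2=\int_{t_n}^{t_{n+1}}\|B\|_2^2\,\|G\|_2^2\,{\rm d} s=h\,\|B\|_2^2\,\|G\|_2^2.
\end{equation*}
This identity is precisely the source of the $O(h)$ (rather than $O(h^2)$) factor that will multiply $\|\varepsilon_n\|_2^2$.

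For the first estimate I would apply the power-mean bound $\|a_1+a_2+a_3+a_4\|_2^2\le 4\sum_{i=1}^4\|a_i\|_2^2$ to these four pieces. The piece containing $\bar U_{n+1}$ contributes $4\theta^2h^2\|\alpha(U_{t_{n+1}})-\alpha(\bar U_{n+1})\|_2^2\le 4\theta^2L_{\tilde A}^2h^2\|\varepsilon_{n+1}\|_2^2$ by the $L^2(\psi)$-Lipschitz property of $\alpha$ from Assumption \ref{ass1}, which is exactly the stated $\varepsilon_{n+1}$-term. The other three pieces are all controlled by $\|\varepsilon_n\|_2^2$: the $\bar U_n$-drift piece gives $4(1-\theta)^2L_{\tilde A}^2h^2\|\varepsilon_n\|_2^2\le C\,h\|\varepsilon_n\|_2^2$ after using the step-size restriction to absorb one factor of $h$, while the two diffusion pieces give, by the isometry identity above together with the Lipschitz bounds $\|\beta(U_{t_n})-\beta(\bar U_n)\|_2\le L_{\tilde A}\|\varepsilon_n\|_2$ and $\|\gamma(U_{t_n})-\gamma(\bar U_n)\|_2\le L_{\tilde A}\|\varepsilon_n\|_2$ and the growth bounds of the type \eqref{lin-3} for $\|\gamma(U_{t_n})\|_2^2\le K'_{\tilde A}(1+A)$ and $\|\beta(\bar U_n)\|_2^2\le K'_{\tilde A}(1+\tilde A)$, contributions $4hL_{\tilde A}^2K'_{\tilde A}(1+A)\|\varepsilon_n\|_2^2$ and $4hL_{\tilde A}^2K'_{\tilde A}(1+\tilde A)\|\varepsilon_n\|_2^2$. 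All Lipschitz and growth constants may be taken uniform in $n$ because, for $0\le n\le n_0$ and $h\le C_0/(\theta(1+K'_0))$, Proposition \ref{prop:uniform-bound} forces $\|U_{t_n}\|_2,\|\bar U_n\|_2\le\sqrt{\tilde A}$. Summing the three $\varepsilon_n$-contributions then produces $C_{\ref{lem:u}}\,h\|\varepsilon_n\|_2^2$ with $C_{\ref{lem:u}}$ depending only on $L_{\tilde A},K'_{\tilde A},A,\tilde A$, which is the first inequality.

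For the second estimate I would first compute $\mathbb{E}_n[u_n]$. The diffusion difference vanishes under $\mathbb{E}_n$: since $\beta(U_{t_n}),\gamma(U_{t_n}),\beta(\bar U_n),\gamma(\bar U_n)\in\mathscr{W}_{t_n}$ while $\Delta W_n$ is freely independent from $\mathscr{W}_{t_n}$ with $\mathbb{E}_n[\Delta W_n]=\psi(\Delta W_n)=0$, the $\mathscr{W}_{t_n}$-bimodule property of the conditional expectation gives $\mathbb{E}_n[\beta(U_{t_n})\Delta W_n\gamma(U_{t_n})]=\beta(U_{t_n})\,\mathbb{E}_n[\Delta W_n]\,\gamma(U_{t_n})=0$, and likewise for the $\bar U_n$ term. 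The $\bar U_n$-drift piece is $\mathscr{W}_{t_n}$-measurable and hence fixed by $\mathbb{E}_n$, so only the $\bar U_{n+1}$-drift piece is genuinely averaged. Taking $\|\cdot\|_2$ and combining the triangle inequality, the contractivity of $\mathbb{E}_n$ on $L^2(\psi)$, and the Lipschitz property of $\alpha$ then yields $\|\mathbb{E}_n[u_n]\|_2\le(1-\theta)L_{\tilde A}h\|\varepsilon_n\|_2+\theta L_{\tilde A}h\|\varepsilon_{n+1}\|_2$, matching the right-hand side of the second asserted bound.

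The main obstacle is the bookkeeping around the diffusion term. One must resist arguing that the cross terms between the diffusion difference and the drift differences vanish, since the drift piece involving $\bar U_{n+1}$ depends on $\Delta W_n$ through the implicit update; the crude bound $\|\sum_i a_i\|_2^2\le 4\sum_i\|a_i\|_2^2$ circumvents this while still delivering the sharp coefficient $4\theta^2L_{\tilde A}^2h^2$ on $\|\varepsilon_{n+1}\|_2^2$, which is later needed to absorb the implicit term into the left-hand side of the global-error recursion for small $h$. Equally important is keeping every local Lipschitz and growth constant uniform in $n$, which is exactly why the statement is confined to $0\le n\le n_0$ and to step sizes $h\le C_0/(\theta(1+K'_0))$, the regime in which Proposition \ref{prop:uniform-bound} bounds both the exact and the numerical solution in $L^2(\psi)$.
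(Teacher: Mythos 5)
Your proposal is correct and follows essentially the same route as the paper's proof: the same drift/diffusion splitting (the paper groups the two drift terms into one piece first and then applies $\|a+b\|_2^2\le 2\|a\|_2^2+2\|b\|_2^2$ twice, which yields the identical factor $4\theta^2L_{\tilde A}^2h^2$ on $\|\varepsilon_{n+1}\|_2^2$ as your four-way split), the same telescoping of the diffusion difference with $\|B\,\Delta W_n\,G\|_2^2=h\|B\|_2^2\|G\|_2^2$, the same uniform bounds from Proposition \ref{prop:uniform-bound}, and the same vanishing of the diffusion part under $\mathbb{E}_n$ followed by the triangle inequality for the second estimate. Note that your unsquared form $\|\mathbb{E}_n[u_n]\|_2\le(1-\theta)L_{\tilde A}h\|\varepsilon_n\|_2+\theta L_{\tilde A}h\|\varepsilon_{n+1}\|_2$ is in fact what the paper's proof establishes and uses later, the exponent $2$ on the left of the displayed statement being a typo.
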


\begin{proof}
We write $u_n = u_n^{(1)} + u_n^{(2)},$ where
\begin{equation*}
u_n^{(1)} = \theta(\alpha(U_{t_{n+1}} ) -\alpha(\bar{U}_{n+1}) )  h+(1-\theta)(\alpha(U_{t_{n}} ) -\alpha(\bar{U}_{n}) )  h
\end{equation*}
and
\begin{equation*}
u_n^{(2)} = \beta(U_{t_{n}} )\Delta W_n \gamma (U_{t_n})  -\beta(\bar{U}_n) \Delta W_n \gamma(\bar{U}_{n}).
\end{equation*}

By the operator Lipschitz property of $\alpha,$ we have (note that $L_\alpha \leq L_{\tilde{A}}$)
\begin{equation*}
\left\| \alpha(U_{t_{n+1}}) - \alpha( \bar{U}_{n+1}) \right\|_2^2 \leq  L^{2}_\alpha \left\| U_{t_{n+1}} - \bar{U}_{n+1} \right\|_2^2 \leq  L^{2}_{\tilde{A}}\left\| U_{t_{n+1}} - \bar{U}_{n+1} \right\|_2^2.
\end{equation*}
Therefore,
\begin{equation*}
\begin{split}
\left\|u_n^{(1)} \right\|_2^2 & \leq 2 \theta^2 h^2 \left\| \alpha(U_{t_{n+1}} ) -\alpha(\bar{U}_{n+1}) \right\|_2^2 + 2(1-\theta)^2 h^2 \left\| \alpha(U_{t_{n}} ) -\alpha(\bar{U}_{n}) \right\|_2^2\\
& \leq 2 \theta^2 L_{\tilde{A}}^2 h^2 \left\| U_{t_{n+1}} - \bar{U}_{n+1} \right\|_2^2 + 2(1-\theta)^2 L_{\tilde{A}}^2 h^2 \left\| U_{t_{n}} - \bar{U}_{n} \right\|_2^2\\
& = 2 \theta^2 L_{\tilde{A}}^2 h^2 \|\varepsilon_{n+1}\|_2^2 + 2(1-\theta)^2 L^2_{\tilde{A}} h^2 \|\varepsilon_n\|_2^2.
\end{split}
\end{equation*}
For $\| u_n^{(2)}\|_2,$ by Proposition \ref{prop:uniform-bound}, $\| \bar{U}_{n} \|_2^2$ is uniformly bounded by $\tilde{A}$ for $n=0, 1, \ldots, n_0,$ whenever $0<h\leq \frac{C_0}{\theta (1+ K'_0)}.$ On the other hand, $\| U_{t_n} \|_2^2 \leq A \leq \tilde{A},$ then by the locally operator Lipschitz property of $\beta$ and $\gamma,$ we have 
\begin{equation*}
\begin{split}
\left\|u_n^{(2)} \right\|_2^2 & = \left\|( \beta(U_{t_{n}} )- \beta(\bar{U}_n)) \Delta W_n \gamma (U_{t_n}) + \beta(\bar{U}_n)  \Delta W_n( \gamma (U_{t_n}) -\gamma(\bar{U}_n))\right\|_2^2\\
& \leq 2 \left\| ( \beta(U_{t_{n}} )- \beta(\bar{U}_n)) \Delta W_n \gamma (U_{t_n}) \right\|_2^2 + 2 \left\|  \beta(\bar{U}_n)  \Delta W_n( \gamma (U_{t_n}) -\gamma(\bar{U}_n)) \right\|_2^2.
\end{split}
\end{equation*}
Note that $\psi((\Delta W_n)^2)=h.$ Similar to \eqref{eq:freecmf}, we deduce that
\begin{equation}\label{UWU}
\begin{split}
\left\| ( \beta(U_{t_{n}} )- \beta(\bar{U}_n)) \Delta W_n \gamma (U_{t_n}) \right\|_2^2 
&=(\psi\left( \gamma (U_{t_n})  ( \beta(U_{t_{n}} )- \beta(\bar{U}_n)) \right))^2\psi((\Delta W_n)^2)\\
&\leq\left\| \beta(U_{t_{n}} )- \beta(\bar{U}_n)\right\|_2^2 \left\| \gamma (U_{t_n}) \right\|_2^2 h\\
& \leq L^2_{\tilde{A}} K'_{\tilde{A}}(1+A) h \left\| \varepsilon_n\right\|_2^2.
\end{split}
\end{equation}
Similarly, 
\begin{equation*}
\left\|  \beta(\bar{U}_n)  \Delta W_n( \gamma (U_{t_n}) -\gamma(\bar{U}_n)) \right\|_2^2 \leq L^2_{\tilde{A}} K'_{\tilde{A}}(1+\tilde{A}) h \left\| \varepsilon_n\right\|_2^2.
\end{equation*}
It follows that
\begin{equation*}
\left\|u_n^{(2)} \right\|_2^2 \leq 2 L_{\tilde{A}}^2 K'_{\tilde{A}}(2+A+ \tilde{A}) h \left\| \varepsilon_n\right\|_2^2.
\end{equation*}
Finally, we obtain 
\begin{equation*}
\begin{split}
\left\|u_n\right\|_2^2  & \leq 2 \left\|u_n^{(1)}\right\|_2^2 + 2 \left\|u_n^{(2)}\right\|_2^2\\
& \leq 4L_{\tilde{A}}^2 [ K'_{\tilde{A}}(2+ A+ \tilde{A}) + (1-\theta)^2 \tilde{T} ] h \left\| \varepsilon_n\right\|_2^2 + 4 \theta^2 L^2_{\tilde{A}} h^2 \left\|\varepsilon_{n+1}\right\|_2^2\\
& \leq C_{\ref{lem:u}} h \left\| \varepsilon_n\right\|_2^2 + 4 \theta^2 L^2_{\tilde{A}} h^2 \left\|\varepsilon_{n+1}\right\|_2^2,\\
\end{split}
\end{equation*}
where we denote $C_{\ref{lem:u}}:=4L_{\tilde{A}}^2 [ K'_{\tilde{A}}(2+ A+ \tilde{A}) + (1-\theta)^2 \tilde{T}].$ 

Now we turn to bound $\left\| \mathbb{E}_n [u_n] \right\|_2^2.$ Note that
\begin{equation*}
\mathbb{E}_n \left[ \beta(U_{t_{n}} )\Delta W_n \gamma (U_{t_n})  -\beta(\bar{U}_n) \Delta W_n \gamma(\bar{U}_{n}) \right] =\beta(U_{t_{n}}) \mathbb{E}_n [\Delta W_n ]\gamma (U_{t_n})- \beta(\bar{U}_n) \mathbb{E}_n [\Delta W_n] \gamma(\bar{U}_{n}) =0.
\end{equation*}
Then we have $\mathbb{E}_n [u_n] = u_n^{(1)},$ it follows that
\begin{equation*}
\begin{split}
\left\| \mathbb{E}_n [u_n]\right\|_2 = \left\|u_n^{(1)} \right\|_2 & \leq  \theta h \left\| \alpha(U_{t_{n+1}} ) -\alpha(\bar{U}_{n+1}) \right\|_2 + (1-\theta) h \left\| \alpha(U_{t_{n}} ) -\alpha(\bar{U}_{n}) \right\|_2\\
& \leq  \theta L_{\tilde{A}} h \left\|\varepsilon_{n+1} \right\|_2 + (1-\theta) L_{\tilde{A}} h \left\|\varepsilon_n \right\|_2,
\end{split}
\end{equation*}
which completes the proof.
\end{proof}

Now we are ready to prove Theorem \ref{thm:converges-stm}.
\begin{proof}[Proof of Theorem \ref{thm:converges-stm}]
{\it Step 1} -- We decompose the global error $ \varepsilon_{n+1} $ as follows:
		\begin{equation*}\label{th3. 4. 1}
		\varepsilon_{n+1}
		=U_{t_{n+1}} -\bar{U}_{n+1}=U_{t_{n+1}} -\bar{U}(t_{n+1}) +\bar{U}(t_{n+1}) -\bar{U}_{n+1}  =\delta_{n+1}+\varepsilon_n+u_n,		
		\end{equation*}
for $n=0, 1, \ldots, n_0,$ where $t_{n_0} = \tilde{T}.$
Taking $L^2(\psi)$-norm on both sides, we obtain 
\begin{equation*}
\left\|\varepsilon_{n+1}\right\|_2^2 = \left\|\delta_{n+1}\right\|_2^2 + \left\|\varepsilon_n\right\|_2^2 +\left\|u_n\right\|_2^2 + 2\psi(\delta_{n+1} \varepsilon_n) + 2\psi(\varepsilon_n u_n)+ 2\psi(\delta_{n+1} u_n).
\end{equation*}
For sufficiently small step size $0<h \le \frac{C_0}{\theta (1+ K'_0)},$ applying Lemma \ref{lem3. 3} and Lemma \ref{lem:u}, we have 
\begin{equation*}
\begin{split}
2\psi(\delta_{n+1} \varepsilon_n) &=2 \psi (\mathbb{E}_n [\delta_{n+1} \varepsilon_n]) = 2 \psi (\mathbb{E}_n [\delta_{n+1}] \varepsilon_n)  \leq 2 \left\| \mathbb{E}_n [\delta_{n+1}] \right\|_2 \cdot \left\| \varepsilon_n\right\|_2 \\
& \leq 2 \sqrt{c_{\ref{lem3. 3}}} h^{3/2} \|\varepsilon_n\|_2 \leq h^2 + c_{\ref{lem3. 3}}h \|\varepsilon_n\|_2^2,
\end{split}
\end{equation*}
\begin{equation*}
\begin{split}
2\psi(\varepsilon_nu_n) &=2 \psi (\mathbb{E}_n [\varepsilon_n u_n]) = 2 \psi ( \varepsilon_n \mathbb{E}_n [u_n])  \leq 2 \left\| \mathbb{E}_n [u_n] \right\|_2 \cdot \left\| \varepsilon_n\right\|_2 \\
& \leq 2 (1-\theta)L_{\tilde{A}}  h \| \varepsilon_n\|_2^2 + 2 \theta L_{\tilde{A}} h \|\varepsilon_{n+1}\|_2 \cdot \|\varepsilon_n\|_2\\
& \leq 2 (1-\theta)L_{\tilde{A}}  h \| \varepsilon_n\|_2^2 + \theta L_{\tilde{A}}  h \left(\|\varepsilon_{n+1}\|_2^2 +\|\varepsilon_n\|_2^2 \right)\\
& =  (2-\theta)L_{\tilde{A}}  h \| \varepsilon_n\|_2^2 + \theta L_{\tilde{A}}  h \|\varepsilon_{n+1}\|_2^2,
\end{split}
\end{equation*}
and 
\begin{equation*}
\begin{split}
2\psi(\delta_{n+1} u_n) & \leq 2 \left\| \delta_{n+1} \right\|_2 \cdot \left\| u_n\right\|_2 \leq \left\| \delta_{n+1} \right\|_2^2 +  \left\| u_n\right\|_2^2   \leq C_{\ref{lem:u}} h \| \varepsilon_n\|_2^2 + 4 \theta^2 L_{\tilde{A}} ^2 h^2 \|\varepsilon_{n+1}\|_2^2 + C_{\ref{lem3. 3}} h^2.
\end{split}
\end{equation*}
Therefore, we obtain 
\begin{equation*}
\begin{split}
\|\varepsilon_{n+1}\|_2^2  \leq &(1+ 2 C_{\ref{lem3. 3}})h^2 + \left[ 1+2 C_{\ref{lem:u}} h + c_{\ref{lem3. 3}}h + (2-\theta) L_{\tilde{A}} h \right] \|\varepsilon_n\|_2^2  \\
&+ \left[ 8\theta^2 L_{\tilde{A}} ^2 h^2+ \theta L_{\tilde{A}} h \right] \| \varepsilon_{n+1}\|_2^2.
\end{split}
\end{equation*}
We assume that $0< h < \frac{\sqrt{33}-1}{16 \theta L_{\tilde{A}}},$ which implies $1-\theta hL_{\tilde{A}} - 8\theta^2 L_{\tilde{A}} ^2 h^2>0.$ It follows that 
\begin{equation}\label{varepsilondd}
\begin{split}
\left\|\varepsilon_{n+1}\right\|_2^2 & \leq \frac{1+2 C_{\ref{lem3. 3}}}{1-\theta hL_{\tilde{A}} - 8\theta^2 L_{\tilde{A}} ^2 h^2} h^2 + \left(1+ \frac{(2C_{\ref{lem:u}} +2L_{\tilde{A}}+ c_{\ref{lem3. 3}} )h+ 8 \theta^2 L_{\tilde{A}} ^2 h^2}{1-\theta hL_{\tilde{A}} -8\theta^2 L_{\tilde{A}} ^2 h^2} \right)\left\|\varepsilon_n\right\|_2^2\\
&:= A(h) h^2 + \left(1+ B(h)h \right) \|\varepsilon_n\|_2^2,
\end{split}
\end{equation}
where we denote $A(h) = \frac{1+2 C_{\ref{lem3. 3}}}{1-\theta hL_{\tilde{A}} - 8\theta^2 L_{\tilde{A}} ^2 h^2} $ and $B(h) =  \frac{(2C_{\ref{lem:u}} +2L_{\tilde{A}}+ c_{\ref{lem3. 3}} )+ 8 \theta^2 L_{\tilde{A}} ^2 h}{1-\theta hL_{\tilde{A}} -8\theta^2 L_{\tilde{A}} ^2 h^2}.$

Set $h_{MAX}$ such that $h\leq h_{MAX}< \min\{\frac{C_0}{\theta (1+ K'_0)},\frac{\sqrt{33}-1}{16 \theta L_{\tilde{A}}}\}.$ Let $A=A(h_{MAX})$ and $B=B(h_{MAX}).$ With abbreviation $h_n:=\|\varepsilon_n\|^2_2$, it follows from \eqref{varepsilondd} that
\begin{equation*}
h_{n+1}\leq A h^2+(1+Bh)h_n.
\end{equation*}
The Gronwall inequality then yields $h_{n}\leq e^{BT} h_0+A/B(e^{BT} -1)h.$ 
Since $h_0=0$, we conclude that
\begin{equation*}
\begin{split}
\|\varepsilon_{n}\|_2^2 & \leq O(h) ~\text{as}~h\to 0
\end{split}
\end{equation*}
for $n=1, \ldots, n_0.$

{\it Step 2} --It is similar to the proof of \cite[Theorem 6.4]{schluechtermannNumericalSolutionFree2022a}. If $\tilde{T}=T,$ i.e., $n_0 = P,$ then we are finished. 
If $\tilde{T}< T,$ by {\it Step 1}, we can conclude that $\| \bar{U}_{n_0} \|_2^2 \rightarrow \| U_{\tilde{T}} \|_2^2$ as $h \rightarrow 0.$ It implies that
$\| \bar{U}_{n_0} \|_2^2 \leq \| U_{\tilde{T}} \|_2^2\leq A< \tilde{A}.$ Hence, considering $t_{n_0}$ as a starting point, and by repeating the above proof, there exists $\tilde{T}^{(1)} \in (\tilde{T},  T],$ such that $\tilde{T}^{(1)}=2 \tilde{T},$ and $\| \bar{U}_n \|_2^2$ is uniformly bounded by $\tilde{A},$ whenever the step size $h$ is sufficiently small. 
Hence, repeating this procedure, we can find $\tilde{T}^{(k)}$ (after $k$ times), such that $\tilde{T}^{(k)} \geq T.$ Therefore, we can conclude that
$\| \bar{U}_n \|_2^2$ is uniformly bounded by $\tilde{A}$ for all $n=0, 1, \ldots, P,$ 
and \eqref{slj} follows from {\it Step 1}.
\end{proof}


\subsection{Exponential stability in mean square of the free STMs}

Motivated by the classical case, we introduce the exponentially stability in mean square of free SDEs. Usually, the stability is formulated in terms of the trivial solution, i.e., $U_t \equiv 0,$ if the trivial solution is available (see \cite{MR1475218} and the reference therein). However, we prefer to define the stability in terms of the SDEs (and later the corresponding numerical methods), rather than the trivial solution, as this allows for possible perturbation of the trivial solution under discretization \cite{higham2003E}.

\begin{Defi}
The free SDEs \eqref{eq:fSDE} are said to be exponentially stable in mean square if there exist constants $c_1, c_2 >0$ such that, for all initial value $U_0 \in \mathscr{F}^{sa},$
\begin{equation*}
		\left\|U_t \right\|_2^2 \le  c_1 e^{-c_2 t} \left\|U_0 \right\|_2^2,
		\end{equation*}
for all $t \geq 0.$
\end{Defi}

\begin{Assu}\label{ass2}	
There exists a constant $ \bar{L} >0,$ such that for any $ U \in \mathscr{F}^{sa},$ 
	\begin{equation}\label{ohtj}
	\psi\left( U \alpha(U) \right)+\frac{k}{2}\sum_{i=1}^k \left\|\beta^i(U) \right\|_2^2 \cdot \left\|\gamma^i(U) \right\|_2^2 \leq -\bar{L} \left\| U \right\|_2^2.
	\end{equation}
\end{Assu}

	\begin{Prop}\label{prop:stable-solution}
		Let $\{U_t\}_{t\geq 0}$ be a solution to the free SDEs \eqref{eq:fSDE}. Suppose that Assumption \ref{ass2} holds.
		Then there exists a constant $C'_{\ref{prop:stable-solution}}>0,$ such that 
		for all $ t\geq 0 $ 
		\begin{equation}\label{eq:solution-stable}
		\left\|U_t \right\|_2^2\le e^{-C'_{\ref{prop:stable-solution}} t} \left\|U_0 \right\|_2^2.
		\end{equation}
	\end{Prop}

\begin{proof}
For simplicity, we assume $k=1,$ and the general case can be easily proved by using the triangular inequality. 
Let $\{U_t\}_{t\geq 0}$ be a solution to the free SDEs \eqref{eq:fSDE} and $C'_{\ref{prop:stable-solution}}:=2\bar{L}>0$.
For all $t \geq 0,$ the free It\^{o} formula \eqref{eq:free-ito} readily implies that (see also \cite[Lemma 2.5]{Kemp2016})
		\begin{equation}\label{ito}
		\begin{split}
		{\rm d} (e^{C'_{\ref{prop:stable-solution}} t} U^2_t) &=e^{C'_{\ref{prop:stable-solution}} t} \cdot {\rm d} U^2_t + U^2_t \cdot {\rm d} e^{C'_{\ref{prop:stable-solution}} t}  \\
		&=e^{C'_{\ref{prop:stable-solution}} t} \left(U_t \cdot {\rm d} U_t + {\rm d} U_t \cdot U_t + {\rm d} U_t \cdot {\rm d} U_t \right)+ U^2_t \cdot {\rm d} e^{C'_{\ref{prop:stable-solution}} t}\\
		&=e^{C'_{\ref{prop:stable-solution}} t} \left(U_t \alpha(U_t)  {\rm dt}+\alpha(U_t) U_t {\rm dt}+ \psi \left(\gamma(U_t) \beta(U_t) \right) \cdot \beta (U_t) \gamma(U_t) {\rm dt}\right. \\
		&\left.\quad +U_t \beta(U_t)  {\rm d} W_t \gamma(U_t) + \beta(U_t)  {\rm d} W_t \gamma(U_t) U_t + C'_{\ref{prop:stable-solution}} U^2_t {\rm d} t\right).
		\end{split}
		\end{equation}
		Integrating from $0$ to $t$ on both sides of \eqref{ito} gives
		\begin{equation}\label{ito2}
		\begin{split}
		e^{C'_{\ref{prop:stable-solution}} t} U^2_t -U^2_0 
		&=\int_0^t e^{C'_{\ref{prop:stable-solution}} s} U_s  \alpha(U_s)  {\rm d s} +\int_0^t e^{C'_{\ref{prop:stable-solution}} s} \alpha(U_s )  U_s  {\rm d s}+ \int_0^t  e^{C'_{\ref{prop:stable-solution}} s} \psi \left(\gamma(U_s) \beta(U_s) \right) \cdot \beta(U_s) \gamma(U_s) {\rm ds}\\
		&\quad+\int_0^t e^{C'_{\ref{prop:stable-solution}} s} U_s  \beta(U_s)  {\rm d} W_s \gamma(U_s)  + \int_0^t e^{C'_{\ref{prop:stable-solution}} s}\beta(U_s)  {\rm d} W_s \gamma(U_s) U_s + C'_{\ref{prop:stable-solution}} \int_0^t e^{C'_{\ref{prop:stable-solution}} s} U^2_s  {\rm d s}.
		\end{split}
		\end{equation}
Note that
\begin{equation*}
\psi\left( \int_0^t U_s  \beta(U_s)  {\rm d} W_s \gamma(U_s)\right) = \psi\left(  \int_0^t \beta(U_s)  {\rm d} W_s \gamma(U_s) U_s\right) =0.
\end{equation*}
It follows from taking the trace in both side of \eqref{ito2} that
\begin{equation*}
		\begin{split}
		&\psi\left(e^{C'_{\ref{prop:stable-solution}} t} U^2_t \right) -\psi\left(U^2_0 \right) \\
		&=\psi\left(\int_0^t e^{C'_{\ref{prop:stable-solution}} s} U_s  \alpha(U_s)  {\rm ds} \right) +\psi\left(\int_0^t e^{C'_{\ref{prop:stable-solution}} s} \alpha(U_s)  U_s {\rm d s}\right) +\psi \left(C'_{\ref{prop:stable-solution}} \int_0^t e^{C'_{\ref{prop:stable-solution}} s} U^2_s  {\rm d s}\right)\\
		&\quad + \psi \left(  \int_0^t  e^{C'_{\ref{prop:stable-solution}} s} \psi \left(\gamma(U_s) \beta(U_s) \right) \cdot \beta(U_s) \gamma(U_s) {\rm ds} \right)\\
		& = 2 \int_0^t e^{C'_{\ref{prop:stable-solution}} s} \psi\left(U_s  \alpha(U_s) \right)  {\rm d s}+ \int_0^t  e^{C'_{\ref{prop:stable-solution}} s} \psi \left(\gamma(U_s) \beta(U_s) \right) \cdot \psi\left(\beta(U_s) \gamma(U_s)\right) {\rm ds}+
		C'_{\ref{prop:stable-solution}} \int_0^t e^{C'_{\ref{prop:stable-solution}} s} \psi \left(U^2_s \right){\rm d s}.
		\end{split}
		\end{equation*}			
Now using the non-commutative Cauchy-Schwarz inequality, \eqref{ohtj} results in
{\small\begin{equation*}
\begin{split}
e^{C'_{\ref{prop:stable-solution}} t} \psi\left(U^2_t \right) -\psi\left(U^2_0 \right) 
& \leq  2\int_0^t e^{C'_{\ref{prop:stable-solution}} s} \psi\left(U_s  \alpha(U_s) \right)  {\rm d s}+\int_0^t e^{C'_{\ref{prop:stable-solution}} s} \left\|\beta(U_s) \right\|_2^2 \cdot \left\|\gamma(U_s) \right\|_2^2 {\rm d s}+C'_{\ref{prop:stable-solution}} \int_0^t e^{C'_{\ref{prop:stable-solution}} s} \left\|U_s\right\|_2^2{\rm d s}\\
& \leq -2 \bar{L} \int_0^t e^{C'_{\ref{prop:stable-solution}} s} \|U_s \|_2^2 {\rm d s}+C'_{\ref{prop:stable-solution}} \int_0^t e^{C'_{\ref{prop:stable-solution}} s} \left\|U_s\right\|_2^2{\rm d s}=0. 
\end{split}
\end{equation*}}
Hence, our proof completes. 
\end{proof}	

\begin{Rem}
Suppose that the initial value $U_0=0.$ It is clear that \eqref{eq:solution-stable} forces the free SDEs to have the trivial solution. However, let $\mu<0$ and $\sigma>0,$ for the free Ornstein-Uhlenbeck equation ${\rm d} U_t = \mu U_t {\rm dt} + \sigma  {\rm d} W_t$ with $U_0 =0,$ we have $\|U_t\|_2^2 \to \sigma \sqrt{\frac{2}{|\mu|}}$ as $t \to \infty$ \cite{karginFreeStochasticDifferential2011}. Hence, the free Ornstein-Uhlenbeck equation does not have a trivial solution. Therefore, it motives us to introduce the exponential stability in mean square of the perturbation solution. We postpone it to Appendix \ref{sec:appendix}. 
\end{Rem}

Analogue to the classical SDEs \cite{higham2003E,MR2926259,MR4589707}, we have the following definition of exponential stability in mean square for a numerical method which produces the numerical solution $\bar{U}_n$:
\begin{Defi}
For a given step size $h>0$, a numerical method is said to be exponentially stable in mean square if there exist constants $C_1, C_2>0$, such that with initial value $U_0\in \mathscr{F}^{sa}$,
		\begin{equation} \label{wdx}
		\left\|\bar{U}_n \right\|_2^2\le C_1 e^{- C_2 nh} \left\|U_0 \right\|_2^2,
		\end{equation}
		for any $ n \in \mathbb{N}.$
	\end{Defi}

\begin{Theo}\label{12wdx}
Let $\{\bar{U}_n\}$ be a numerical solution to the free SDEs \eqref{eq:fSDE} given by the free STMs \eqref{eqtheta}.
	Suppose that Assumption \ref{ass2} holds, and additionally we assume that 	
	\begin{equation}\label{wdxtj2}
	\left\|\alpha(U) \right\|_2^2 \leq \bar{K} \left\|U \right\|_2^2,
	\end{equation}
	for any $U \in \mathscr{F}^{sa}.$
Then we have the following statements.
	\begin{enumerate}
	\item If $ \theta \in [0, 1/2),$ set $\rho$ such that $0< \rho < \frac{2\bar{L}}{(1-2\theta)\bar{K}},$ then for any given step size $0< h \leq \rho,$ there exists a constant $C_{\ref{12wdx}}>0$ such that
	\begin{equation*}
		\left\|\bar{U}_n \right\|_2^2\le e^{-C_{\ref{12wdx}}nh} \left\|U_0 -\theta h \alpha(U_0)\right\|_2^2,
		\end{equation*}
		for any $ n\in \mathbb{N}$.

	\item If $\theta \in [1/2, 1]$, then for any given step size $h>0,$
there exists a constant $C'_{\ref{12wdx}}>0$ such that
			\begin{equation*}
		\left\|\bar{U}_n \right\|_2^2\le e^{-C'_{\ref{12wdx}}nh} \left\|U_0 -\theta h \alpha(U_0)\right\|_2^2,
		\end{equation*}
		for any $ n\in \mathbb{N}$.
		\end{enumerate}
\end{Theo}

\begin{proof}
For simplicity, we again assume that $k=1.$ Recall that 
\begin{equation}\label{eq:theta2}
\bar{U}_{n+1}- \theta\alpha(\bar{U}_{n+1})  h =\bar{U}_n+(1-\theta)\alpha(\bar{U}_{n})  h+ \beta (\bar{U}_n)  \Delta W_n\gamma(\bar{U}_n).
\end{equation} 
Thanks to \eqref{ohtj}, the left side of \eqref{eq:theta2} can be lower bounded as follows:
		\begin{equation}\label{th6.1}
		\begin{split}
		\left\|\bar{U}_{n+1}-\theta\alpha(\bar{U}_{n+1})  h\right\|_2^2
		&=\left\|\bar{U}_{n+1}\right\|_2^2-2\theta h \psi \left(  \bar{U}_{n+1} \alpha(\bar{U}_{n+1}) \right)+\theta^2\left\|\alpha(\bar{U}_{n+1}) \right\|_2^2 h^2 \\
		&\geqslant 
		\left(1+2\bar{L} \theta h\right) \left\|\bar{U}_{n+1}\right\|_2^2.
		\end{split}
		\end{equation}
On the other hand, similar to \eqref{eq:1}, the right side of \eqref{eq:theta2} can be estimated as follows: 
\begin{equation}\label{th6.2}
\begin{split}
& \left\| \bar{U}_n+(1-\theta)\alpha(\bar{U}_{n})  h+\beta (\bar{U}_n)  \Delta W_n\gamma(\bar{U}_n) \right\|_2^2\\
& = \left\| \bar{U}_n-\theta h \alpha(\bar{U}_{n}) \right\|_2^2  +  h^2 \left\| \alpha(\bar{U}_n)\right\|_2^2 + \left\| \beta(\bar{U}_n)  \Delta W_n\gamma(\bar{U}_n) \right\|_2^2+ 2 h \psi \left( (\bar{U}_n-\theta h \alpha(\bar{U}_{n}))\alpha( \bar{U}_n) \right)\\
&\quad + 2 h \psi \left(  \beta (\bar{U}_n)  \Delta W_n\gamma(\bar{U}_n)\alpha( \bar{U}_n) \right)+2 \psi \left( (\bar{U}_n-\theta h \alpha(\bar{U}_{n})) \beta (\bar{U}_n)  \Delta W_n\gamma(\bar{U}_n) \right)\\
& \leq \left\| \bar{U}_n-\theta h \alpha(\bar{U}_{n}) \right\|_2^2  + (1-2\theta) h^2 \left\| \alpha(\bar{U}_n)\right\|_2^2 + \left\| \beta(\bar{U}_n) \right\|_2^2\cdot \left\|  \gamma(\bar{U}_n)  \right\|_2^2 h+ 2 h \psi \left( \bar{U}_n\alpha( \bar{U}_n) \right)\\
&\leq \left\| \bar{U}_n-\theta h \alpha(\bar{U}_{n}) \right\|_2^2 +((1-2\theta)\bar{K}h -2\bar{L})h\left\| \bar{U}_n\right\|_2^2,
\end{split}
\end{equation}
where we have used \eqref{ohtj}, \eqref{wdxtj2}, and the fact that (see \eqref{UWU})
\begin{equation*}
\left\|\beta (\bar{U}_n)  \Delta W_n\gamma(\bar{U}_n) \right\|_2^2  \leq \left\| \beta(\bar{U}_n) \right\|_2^2\cdot \left\|  \gamma(\bar{U}_n)  \right\|_2^2 h.
\end{equation*}
Then, it follows from $\|U-V\|_2^2\leq2\|U\|_2^2+2\|V\|_2^2$ and \eqref{wdxtj2} that
\begin{equation*}
 \left\| \bar{U}_n-\theta h \alpha(\bar{U}_{n}) \right\|_2^2\leq 2\left\| \bar{U}_n\right\|_2^2+2 \theta^2h^2\left\| \alpha(\bar{U}_n)\right\|_2^2\leq(2+2\theta^2h^2\bar{K})\left\| \bar{U}_n\right\|_2^2,
 \end{equation*}
which implies
 \begin{equation}\label{th6.3}
\left\| \bar{U}_n\right\|_2^2\geq \frac{1}{2+2\theta^2h^2\bar{K}} \left\| \bar{U}_n-\theta h \alpha(\bar{U}_{n}) \right\|_2^2.
 \end{equation}

Now denote $f_n:=\left\|\bar{U}_{n}-\theta h \alpha(\bar{U}_{n}) \right\|_2^2$. 
Let $\theta \in [0, 1/2),$ for any $0< h \leq \rho< \frac{2\bar{L}}{(1-2\theta)\bar{K}},$ combining \eqref{eq:theta2}, \eqref{th6.2}, and \eqref{th6.3} leads to 
 \begin{equation}\label{th6.4}
f_{n+1}\leq \left(1+\frac{((1-2\theta)\bar{K}h -2\bar{L})h}{2+ 2\theta^2h^2\bar{K}} \right) f_n \leq(1-C_{\ref{12wdx}}h)f_n,
\end{equation}
where $C_{\ref{12wdx}}:=\frac{(2\theta-1)\bar{K} \rho+2\bar{L}}{2+2\theta^2h^2\bar{K}}>0$.
Applying $(1+x)\leq e^x$ to \eqref{th6.4} and iterating $n$ times, we obtain
\begin{equation*}\label{th6.5}
f_n \leq e^{-C_{\ref{12wdx}}nh}f_0.
\end{equation*}
Therefore, combining \eqref{th6.1}, we have
\begin{equation*}\label{eq:th6.2}
\begin{split}
 \left\|\bar{U}_{n}\right\|_2^2 
& \leq  \frac{1}{\left(1+2 L^{\prime}\theta h\right)}e^{-C_{\ref{12wdx}}nh}f_0 \leq e^{-C_{\ref{12wdx}}nh} f_0,
\end{split}
\end{equation*}
which completes our first statement.

For $\theta \in [1/2, 1]$, note that $ (1-2\theta) h^2 \| \alpha(\bar{U}_n) \|_2^2\leq0$ for any $h>0.$ Hence, \eqref{th6.2} reduces to 
\begin{equation}
\left\| \bar{U}_n+(1-\theta)\alpha(\bar{U}_{n})  h+\beta (\bar{U}_n)  \Delta W_n\gamma(\bar{U}_n) \right\|_2^2  \leq \left\| \bar{U}_n-\theta h \alpha(\bar{U}_{n}) \right\|_2^2 -2\bar{L}h\left\| \bar{U}_n\right\|_2^2.
\end{equation}
Similar to \eqref{th6.4}, we can obtain 
\begin{equation}
f_{n+1}\leq \left(1-\frac{2\bar{L}h}{2+ 2\theta^2h^2\bar{K}} \right) f_n =(1-C'_{\ref{12wdx}}h)f_n,
\end{equation}
where $C'_{\ref{12wdx}}:=\frac{2\bar{L}}{2+2\theta^2h^2\bar{K}}>0$.
Hence, for any given step size $h>0,$ we have 
\begin{equation*}
f_n \leq e^{-C'_{\ref{12wdx}}nh}f_0,
\end{equation*}
which completes our proof.
\end{proof}

The free STM with $\theta=1$ is called a free analogue of backward Euler method (free BEM). Our result shows that free BEM has better stability properties than the free EM. 

\section{Numerical experiment}\label{sec:experiment}

This section aims to present several numerical experiments to illustrate the convergence order and stability of the considered numerical methods. Note that the practical implementation of free STMs is to consider $N \times N$ matrices for large $N,$ and the accuracy of the numerical solutions is assessed by comparing their spectrum distribution approximations with those of the exact solutions. We refer to \cite[Section 5]{schluechtermannNumericalSolutionFree2022a} for more details. 

Note that the free Brownian motion can be regarded as the large $ N $ limit of the $ N\times N $  self-adjoint  Gaussian random matrix \cite{Kemp2016, schluechtermannNumericalSolutionFree2022a}. We take into account $ N $-dimensional random matrices $W^{(N)}:=(W_{ij} )$ with i.i.d. entries, i.e., 
$\{W_{ij}: i, j=1, \ldots, N\}$ are independent standard Gaussian random variables, i.e., $ W_{ij}\sim N(0, 1).$ Our algorithm is given as follows:
	
{\it Step 1}--Giving a partition of $ [0, T] $ into $ P=2^l$ $(l\in\mathbb{N} )$ intervals with constant step size $ h=T/P,$ and usually the step size $h$ is taken to be sufficiently small.
	
{\it Step 2}--Generating $ M\in \mathbb{N} $ different free Brownian motion paths where the increments $ \Delta W_n=\sqrt{\frac{h}{2N}} [W^{(N)}+(W^{(N)})^T]$ $(n=0,1, \ldots, P-1).$
	
{\it Step 3}--Finding the numerical solutions $\{\bar{U}_{n+1}^{(N)}\}_{n=0}^{P-1}$ by iterating the following matrix-valued equations:
\begin{equation*}\label{nonlineareq}
\bar{U}_{n+1}^{(N)} -\theta\alpha(\bar{U}_{n+1}^{(N)})  h= \bar{U}_{n}^{(N)} +(1-\theta)\alpha(\bar{U}_{n}^{(N)})  h+\beta(\bar{U}_{n}^{(N)})  \Delta W_{n} \gamma(\bar{U}_{n}^{(N)}).
\end{equation*}
If $\alpha$ is a linear function, our numerical methods are ``pseudo-implicit" and can be solved explicitly. If $\alpha$ is nonlinear, only a few cases are applicable; for instance, $\alpha$ is a polynomial \cite{Kratz1987NumericalSO,Dennis1978Algorithms,Davis1981Quadratic,PORCELLI2023349}.

{\it Step 4}--Choosing five different constants $ R<P $ and repeating {\it Step 1-3} to obtain the numerical solutions $ \bar{U}_{P/R}^{(N)} $ by letting $ h_R=R\times h $ instead of $ h $ and $ \Delta W_n^R=\sum_{i=(n-1)R}^{nR}\Delta W_i $ $ (n=0,1,\ldots,P/R-1) $ instead of $ \Delta W_n $ $ (n=0,1, \ldots, P-1).$

{\it Step 5}--Calculating the order of strong convergence according to \eqref{slj}. More precisely, if \eqref{slj} holds, then, taking logs, we have 
\begin{equation*}
\log \left\|\bar{U}_{P}^{(N)} -\bar{U}_{P/R}^{(N)} \right\|_2  \approx \log C+ \frac{1}{2}\log h. 
\end{equation*} 
We can plot our approximation to $ \|\bar{U}_{P}^{(N)} -\bar{U}_{P/R}^{(N)} \|_2 $ against $ h $ on a log-log scale and the slop is the order of strong convergence.


\subsection{Free Ornstein-Uhlenbeck Equation}
Consider the free Ornstein-Uhlenbeck (free OU) equation: 
\begin{equation}\label{ou}
{\rm d} U_t =\mu U_t  {\rm dt} +\sigma {\rm dW_t}, \; \mu, \sigma \in \mathbb{R}. 
\end{equation}
Suppose that the initial value $U_0= 0,$ it was known that the distribution of the exact solution to the free OU equation fulfills the semi-circular law \cite{karginFreeStochasticDifferential2011}.

Figure \ref{out} intuitively shows that the spectrum distribution of the numerical solution obtained by the free BEM. The data was generated using $h=2^{-10}$ as the step size (the following examples are treated in a similar manner). When $N$ tends to be large, the pattern in Figure \ref{out} tends to be a semi-circle (the radius is approximately equal to $1,$ which coincides with the theoretical result 0.9908 \cite{karginFreeStochasticDifferential2011}).  
\begin{figure}[!htbp]
	\centering
	\begin{minipage}{0.32\linewidth}
		\vspace{3pt}
		\centerline{\includegraphics[width=\textwidth]{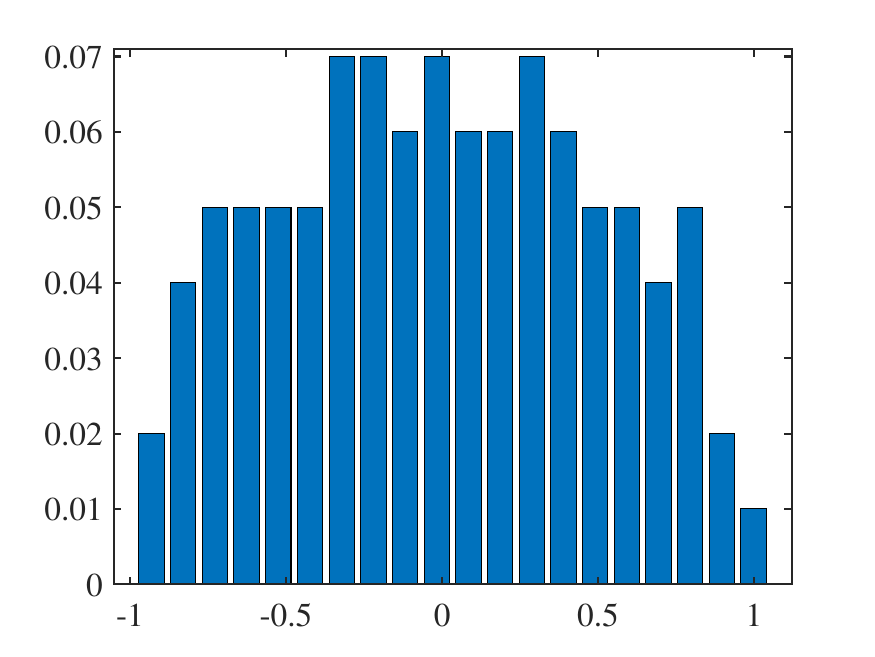}}
		\centerline{{\scriptsize $T=1, N=100$}}
	\end{minipage}
	\begin{minipage}{0.32\linewidth}
		\vspace{3pt}
		\centerline{\includegraphics[width=\textwidth]{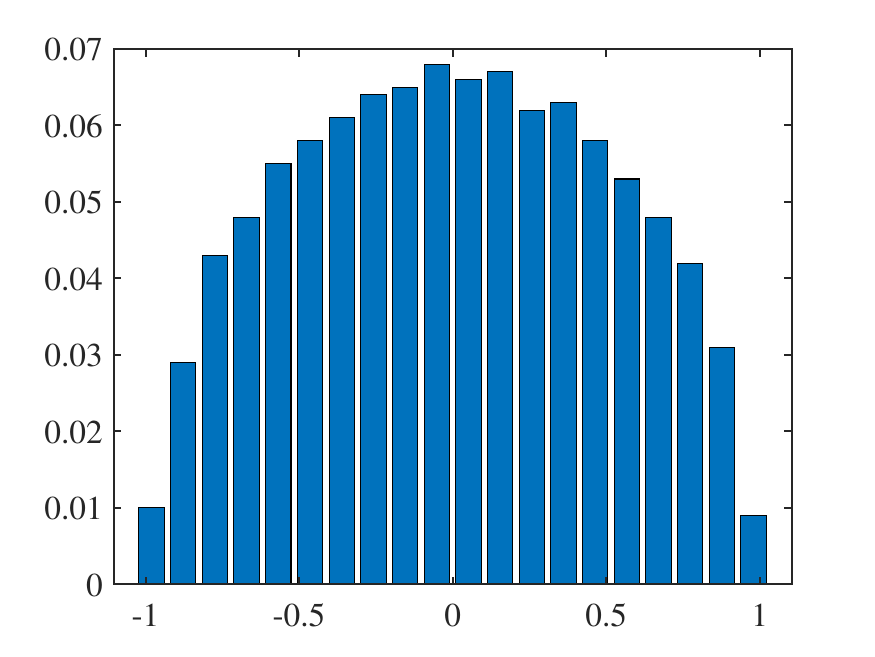}}
		\centerline{{\scriptsize $T=1,N=1000$}}
	\end{minipage}
	\caption{The spectrum distribution of the numerical solution $ \bar{U}_P^{(N)} $  of \eqref{ou}  generated by free BEM with $ \mu=-2$ and $\sigma=1$.
	}\label{out}
\end{figure}

For showing the strong convergence order, the unavailable exact solution is identified with a numerical approximation generated by the corresponding free STMs with a fine step size $ h =2^{-14} $. The expectations are approximated by the Monte Carlo simulation with $M=250$ different free Brownian motion paths. Numerical approximations to \eqref{ou}  are generated by the free STMs with $N=10$ and five different equidistant step sizes $ 2^{-11}, 2^{-10}, 2^{-9}, 2^{-8}, 2^{-7}$, i.e., $ h_R =R \times h $ with $ R=2^3, 2^4, 2^5, 2^6, 2^7 $ (The following examples are handled similarly). See Figure \ref{ous} for the strong convergence order. The order is close to $1$ and it does not conflict with the expected order $1/2$; however, we do not know how to explain it in theory.

\begin{figure}[!htbp]
	\centering
	\centering
	\begin{minipage}{0.32\linewidth}
		\centering
		\vspace{3pt}
		\centerline{\includegraphics[width=\textwidth]{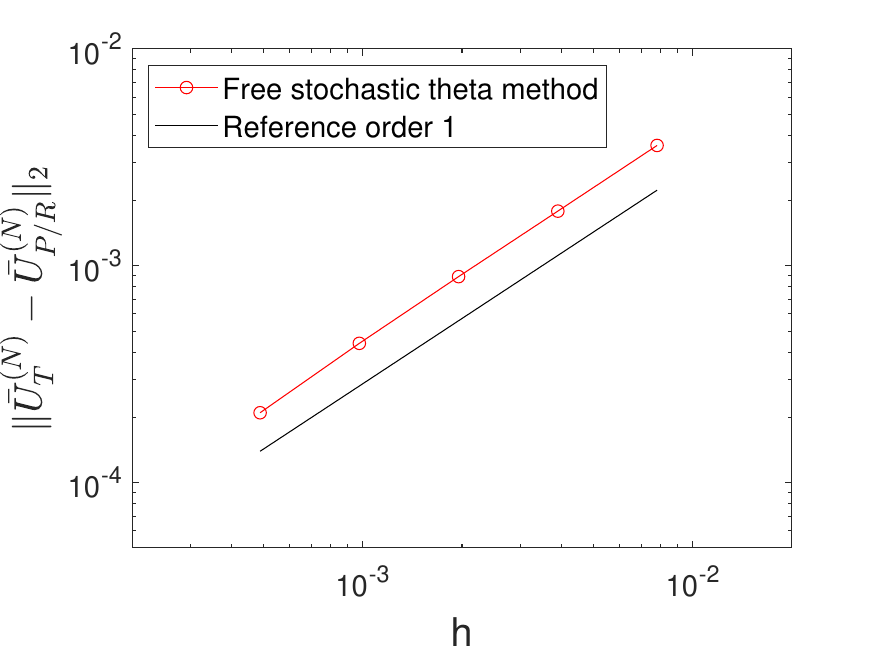}}
		\centerline{{\scriptsize$ \theta =0$}}
	\end{minipage}
	\begin{minipage}{0.32\linewidth}
		\vspace{3pt}
		\centerline{\includegraphics[width=\textwidth]{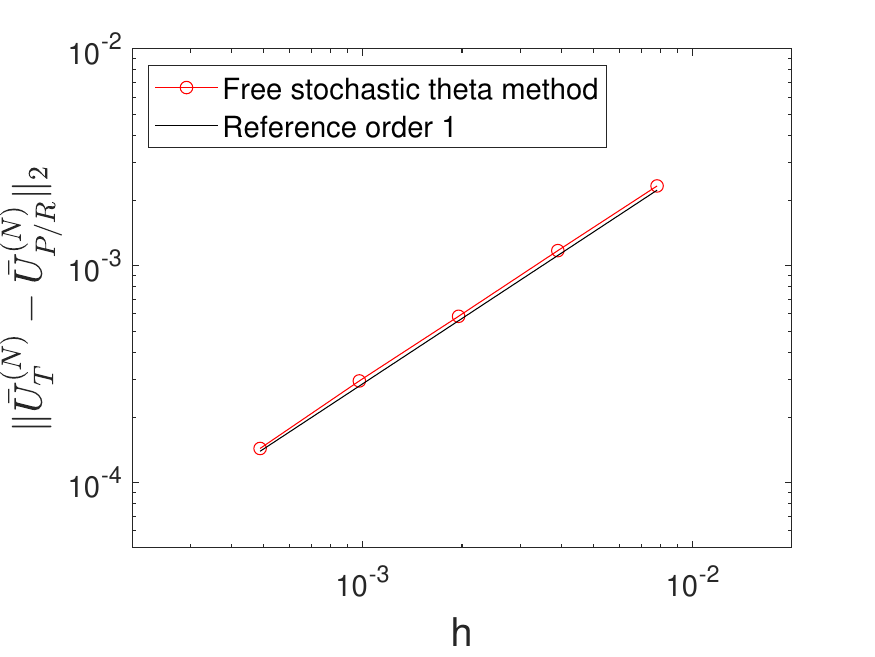}}
		\centerline{{\scriptsize$ \theta =0.5$}}
	\end{minipage}
	\begin{minipage}{0.32\linewidth}
		\vspace{3pt}
		\centerline{\includegraphics[width=\textwidth]{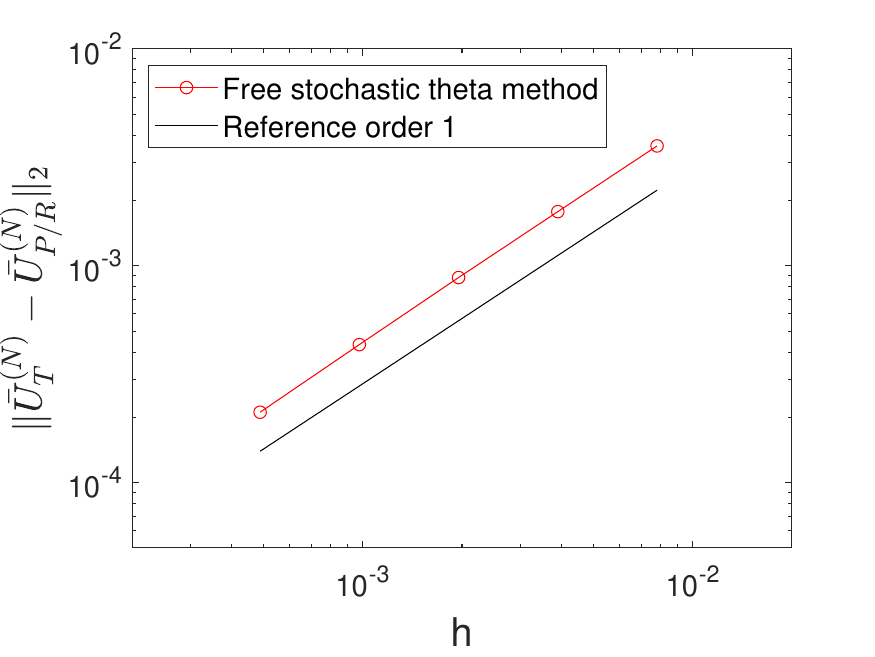}}
		\centerline{{\scriptsize$ \theta =1$}}
	\end{minipage}
	\caption{Strong convergence order simulations for the free STMs applied to \eqref{ou} with $\mu=-2$ and $\sigma=1$.}\label{ous}
\end{figure}

Since the free OU equation \eqref{ou} does not have a trivial solution \cite{karginFreeStochasticDifferential2011}, it motivates us to consider the exponential stability in mean square of the perturbation solution (we refer to Appendix \ref{sec:appendix} for the definition and the theoretical result). 
 Let $ \bar{U}^{(N)}_n $ and $ \bar{V}^{(N)}_n$ be the numerical solutions that are generated by two different initial values $\bar{U}^{(N)}_0 =E$ and $ \bar{V}^{(N)}_0 =2E,$ respectively, where $E$ is the $N \times N$ identity matrix. Let $\mu<0$. For $\theta \in [0, 1/2),$ set $h_{\max}:= \frac{-2}{(1-2\theta)\mu}$, the free STMs are exponentially stable in mean square for any step size $0<h<h_{\max}.$ And for $\theta \in [1/2, 1],$ the free STMs are exponentially stable in mean square for any given step size $h>0$ (see Theorem \ref{12wdxrd} in Appendix \ref{sec:appendix}).

We choose $N=100, \mu=-4,$ and $\sigma=1$. Figure \ref{ouw} demonstrates the exponential stability of the free STMs with $h<h_{\max}$ for different $\theta=0,0.25,0.4$ and the free STMs with different step sizes ($ h=0.25,0.5,1,2,4$) for $\theta=0.5,1$. On the premise of inheriting the exponential stability of the original equation, the free EM requires $16.11s$ with $ h = 0.25$ and $T=20$, while the free BEM only needs $1.05s$ with $ h = 4$. Thus, the free BEM is more efficient than the free EM. Figure \ref{ouw} also shows the free STMs with $\theta=0, 0.25, 0.4$ are not stable with large step sizes $h>h_{\max}$. 

\begin{figure}[!htbp]
	\centering
	\begin{minipage}{0.32\linewidth}
		\vspace{3pt}
		\centerline{\includegraphics[width=\textwidth]{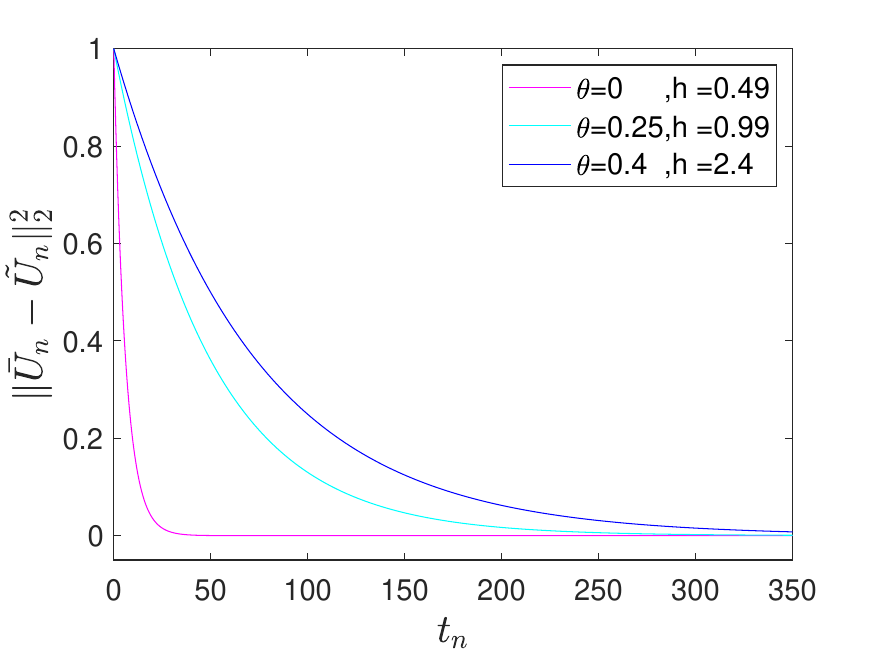}}
		\centerline{{\scriptsize\textit{free STMs with $0\leq\theta<0.5$}}}
	\end{minipage}
	\begin{minipage}{0.32\linewidth}
		\vspace{3pt}
		\centerline{\includegraphics[width=\textwidth]{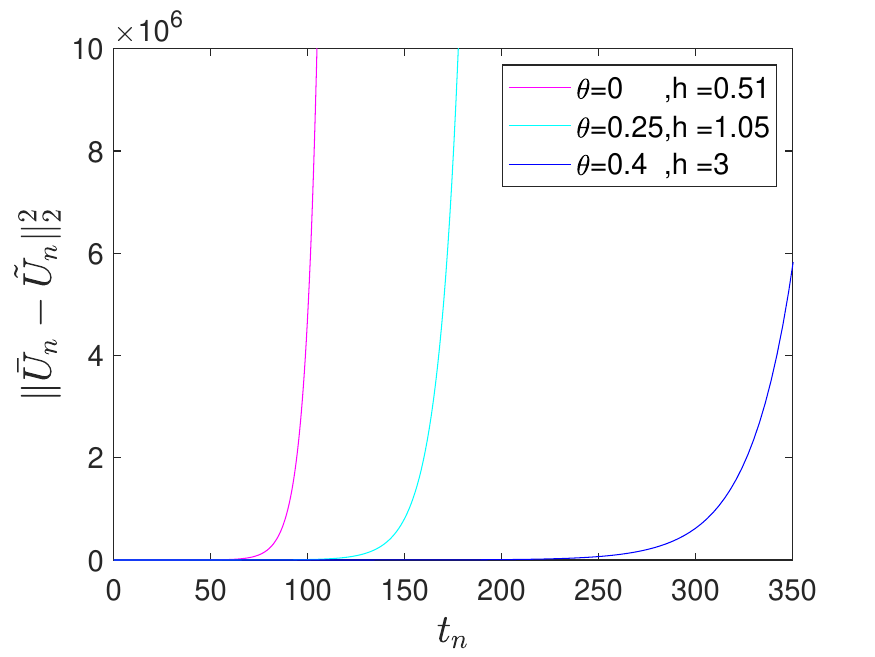}}
		\centerline{{\scriptsize\textit{ free STMs with $0\leq\theta<0.5$}}}
	\end{minipage}
	
	\begin{minipage}{0.32\linewidth}
		\vspace{3pt}
		\centerline{\includegraphics[width=\textwidth]{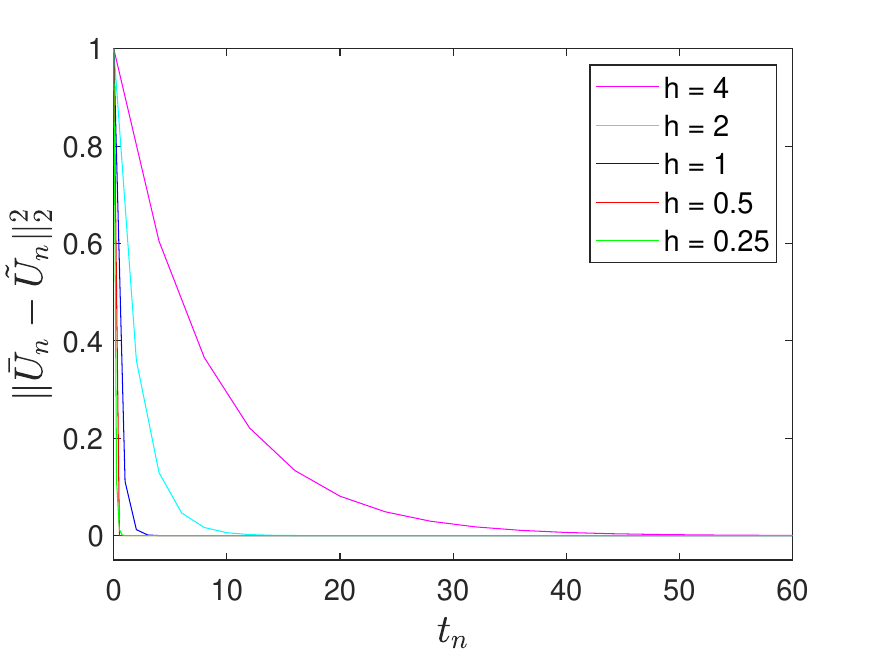}}
		\centerline{{\scriptsize\textit{ free STM with $\theta=0.5$}}}
	\end{minipage}
	\begin{minipage}{0.32\linewidth}
		\vspace{3pt}
		\centerline{\includegraphics[width=\textwidth]{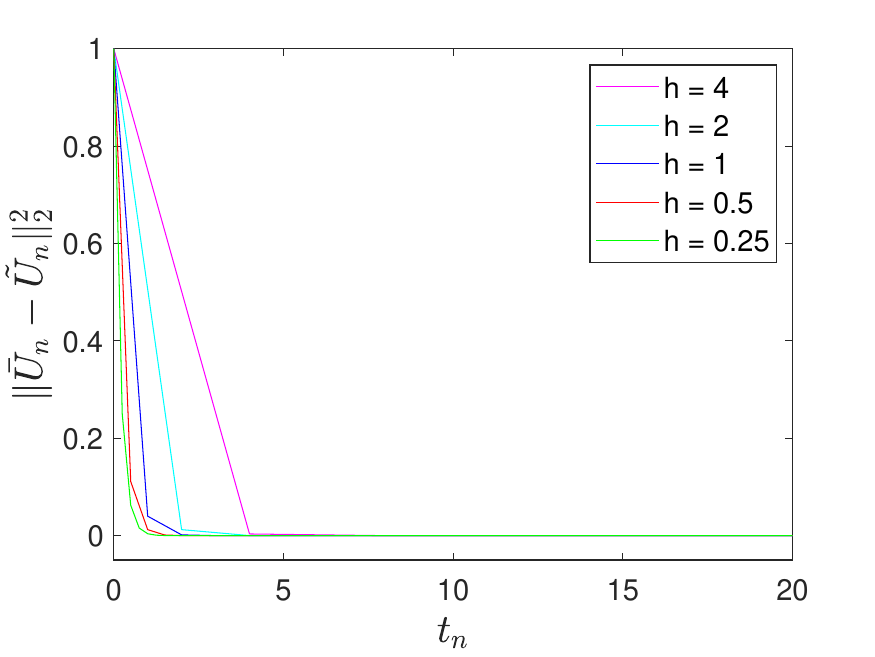}}
		\centerline{{\scriptsize\textit{ free BEM}}}
	\end{minipage}
	\caption{The exponential stability in mean square of free STMs applied to \eqref{ou}.}\label{ouw}
\end{figure}

\subsection{Free Geometric Brownian Motion I }
Consider the free Geometric Brownian Motion (free GBM) I
\begin{equation}\label{geo1}
{\rm d} U_t=\mu U_t {\rm dt}  + \sqrt{U_t} {\rm d} W_t \sqrt{U_t}, \; \mu \in \mathbb{R}. 
\end{equation} 
Suppose that the initial value is $U_0={\bf 1},$ where ${\bf 1}$ is the unit operator in $\mathscr{F}$. We refer to \cite{karginFreeStochasticDifferential2011} for the exact solution. We note that $\sqrt{t}$ is not locally operator Lipschitz in the $L^2 (\psi)$ norm; however, our numerical experiment still shows the strong convergence (The same situation appears in Example \ref{section:cir}). Figure \ref{g1t} shows that the ability of the free BEM to reproduce the spectrum distribution of the numerical solution $ \bar{U}_{n}^{(N)} $. Since the density of the spectrum distribution of $ U_t $ is supported on the interval $ [R_1(t),R_2(t)],$ where $ R_{i}(t)=\frac{r_{i}(t)}{r_{i}(t)+1}e^{(\mu -1-r_{i}(t))t} $ and $r_{i}(t)=(-1 \pm \sqrt{1+\frac{4}{t}})/2$ ($i=1,2$) \cite{karginFreeStochasticDifferential2011}, it has been observed from Figure \ref{g1qj} that the evolution of the supporting interval of the spectrum distribution of $ \bar{U}_n^{(N)} $ fits well with the theoretical values. Figure \ref{g1s} shows that the free STMs give errors that decrease proportionally to $h^{1/2}$, as shown in Theorem \ref{thm:converges-stm}.

\begin{figure}[h]
	\centering
	\begin{minipage}{0.32\linewidth}
		\vspace{3pt}
		\centerline{\includegraphics[width=\textwidth]{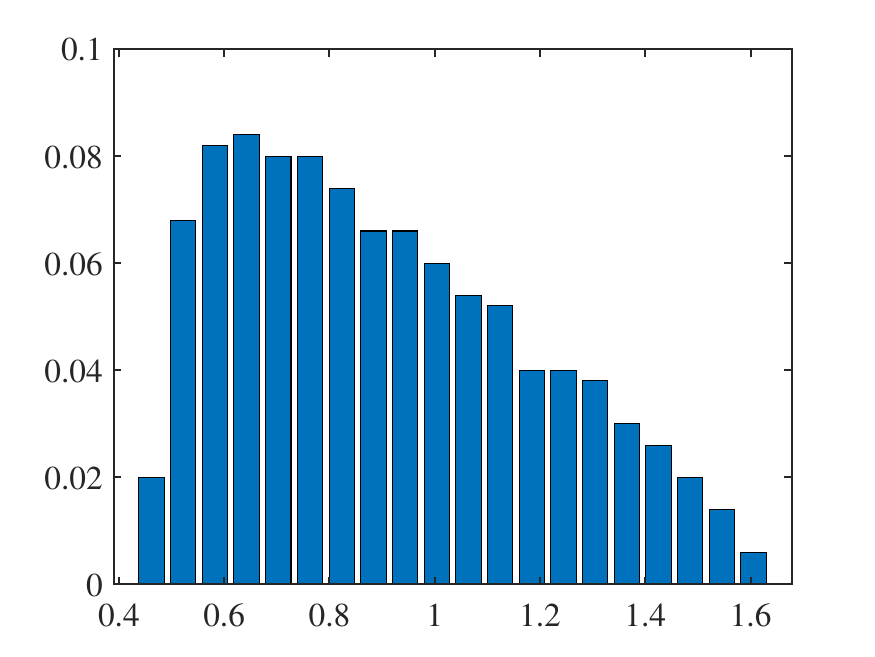}}
		\centerline{{\scriptsize$ T=100h, N=500 $}}
	\end{minipage}
	\begin{minipage}{0.32\linewidth}
		\vspace{3pt}
		\centerline{\includegraphics[width=\textwidth]{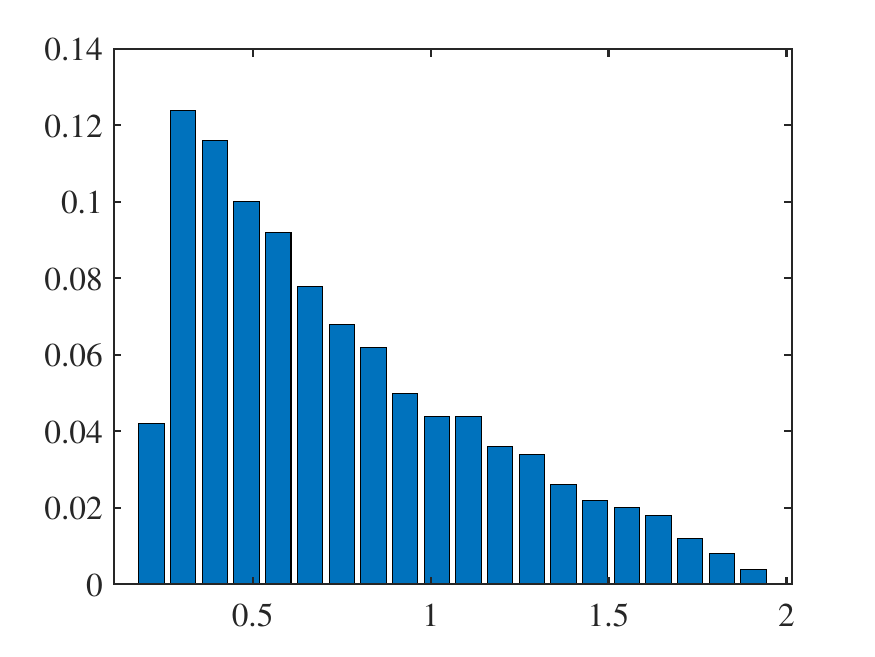}}
		\centerline{{\scriptsize$ T=300h, N=500 $}}
	\end{minipage}
	\begin{minipage}{0.32\linewidth}
		\vspace{3pt}
		\centerline{\includegraphics[width=\textwidth]{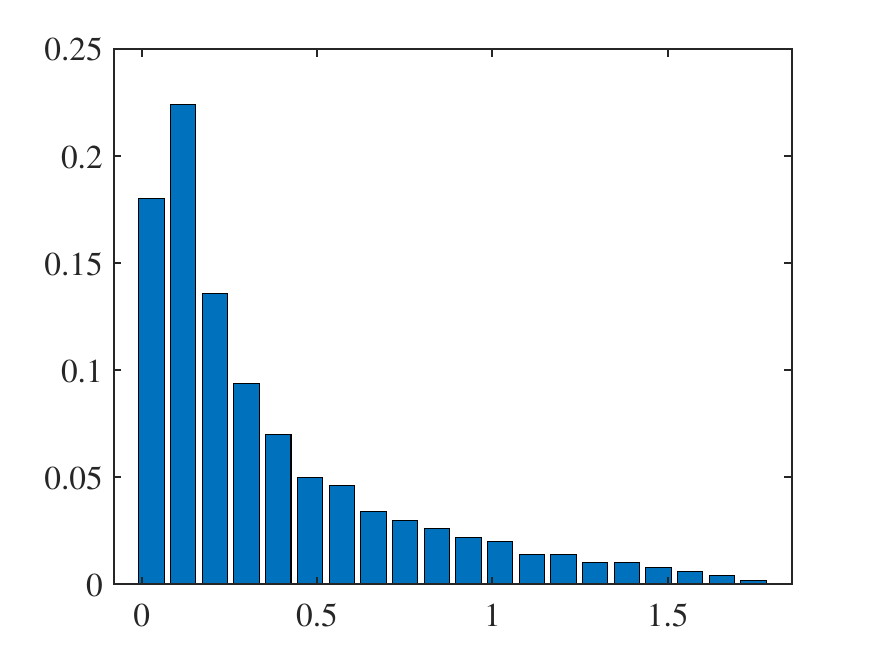}}
		\centerline{{\scriptsize$ T=1024h, N=500 $}}
	\end{minipage}
	\caption{The spectrum distribution of the numerical solution $ \bar{U}_P^{(N)} $ of \eqref{geo1} generated by free BEM with $ \mu=-1$.}\label{g1t}
\end{figure}

\begin{figure}[!htbp]
	\centering
	\includegraphics[width=4.5cm]{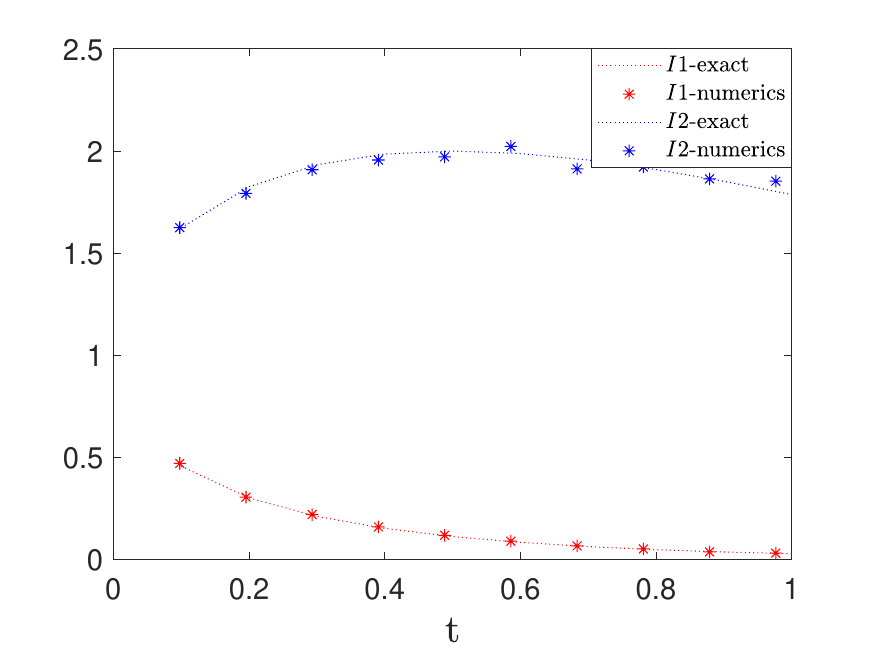}
	\caption{Comparison of boundaries of support interval $ [I1=R_1(t),I2=R_2(t)] $ of the density of spectrum distribution of \eqref{geo1} with $ \mu=-1, h =2^{-10},$ and $N=500$.}\label{g1qj}
\end{figure}

\begin{figure}[!htbp]
	\centering
	\centering
	\begin{minipage}{0.32\linewidth}
		\vspace{3pt}
		\centerline{\includegraphics[width=\textwidth]{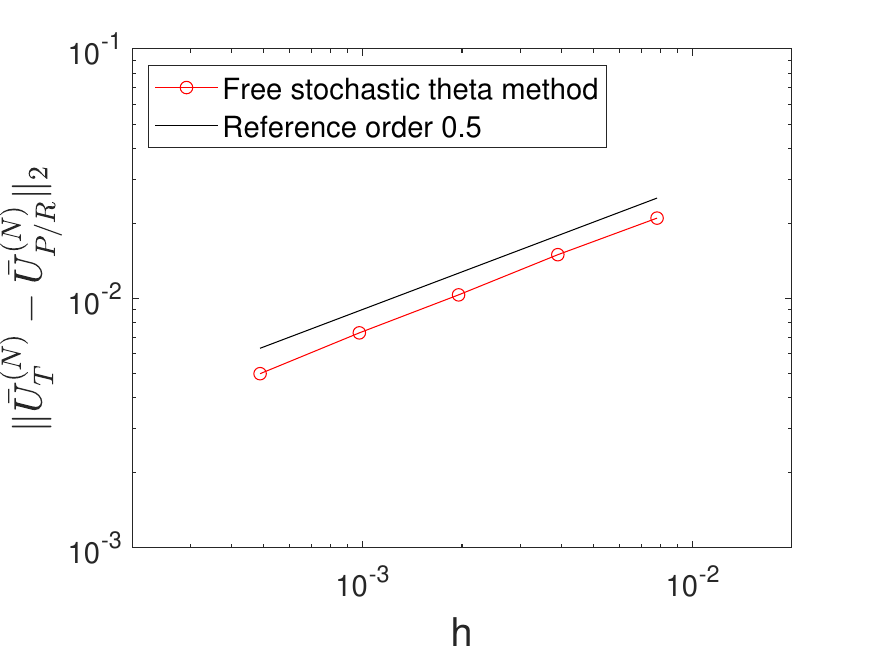}}
		\centerline{{\scriptsize$ \theta =0$}}
	\end{minipage}
	\begin{minipage}{0.32\linewidth}
		\vspace{3pt}
		\centerline{\includegraphics[width=\textwidth]{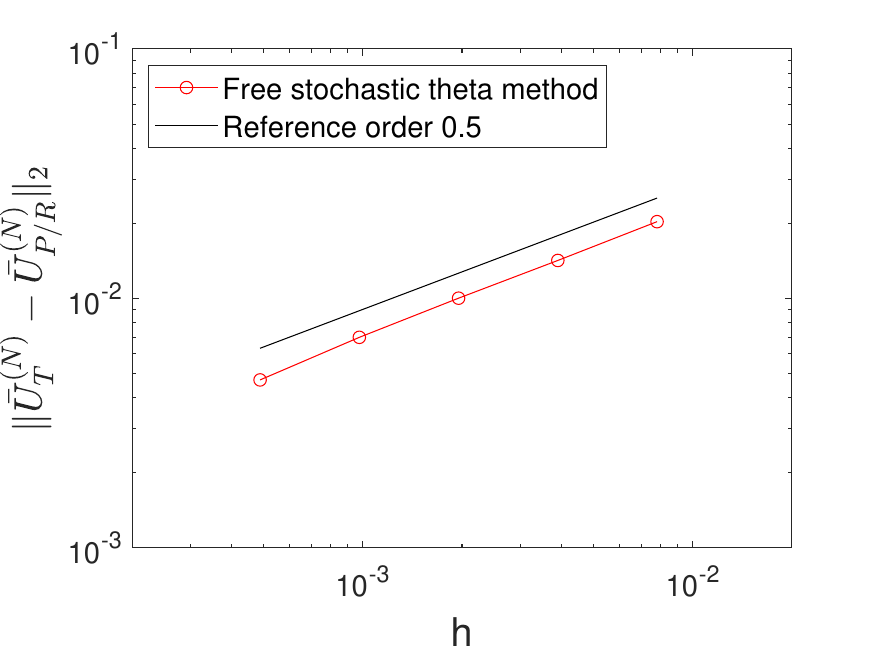}}
		
		\centerline{{\scriptsize$ \theta =0.5$}}
	\end{minipage}
	\begin{minipage}{0.32\linewidth}
		\vspace{3pt}
		\centerline{\includegraphics[width=\textwidth]{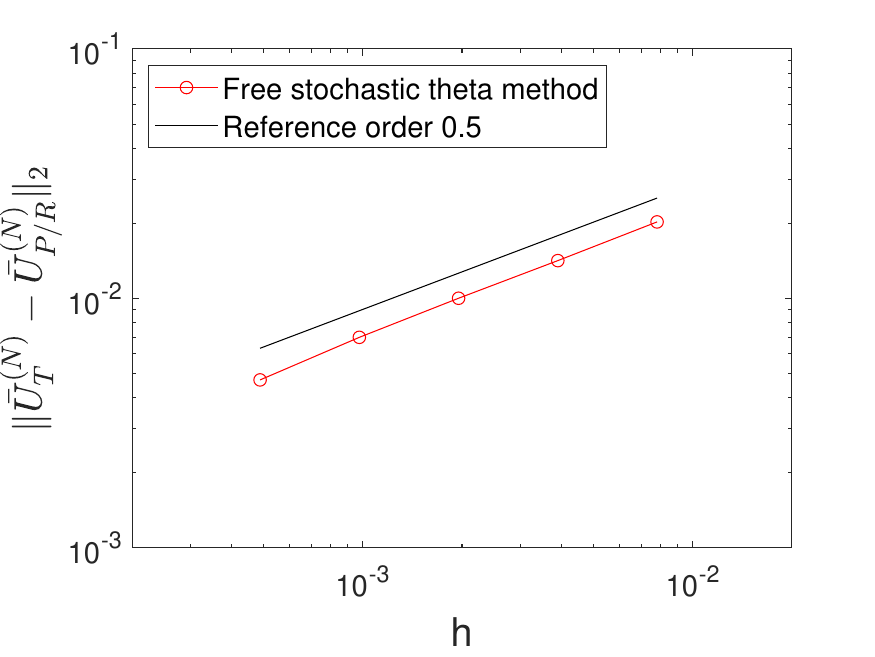}}
		
		\centerline{{\scriptsize$ \theta =1$}}
	\end{minipage}
	\caption{Strong convergence order simulations for the free STMs applied to \eqref{geo1} with $\mu=-1$.}\label{g1s}
\end{figure}

Obviously, the free GBM I has a trivial solution. Thus, by Theorem \ref{12wdx}, we set $\bar{L}=-\mu-1/2, \bar{K}=\mu^2$ and let $\mu<-1/2.$ It appears that the free STMs with $\theta \in [0,1/2)$ are exponentially stable when $h<\frac{-2\mu-1}{(1-2\theta)\mu^2}=:h_{\max}$.  And for $\theta \in [1/2,1],$ the free STMs are exponentially stable for any given $h > 0.$

We choose $N=100$ and $ \mu=-4$. Figure \ref{g1w} shows the exponential stability of the free STMs with $h<h_{\max}$ for different $\theta=0,0.25,0.4$ and the free STMs with different step sizes ($ h=0.25,0.5,1,2,4$) for $\theta=0.5,1$. Under the premise of inheriting the exponential stability in mean square of the original equation, the free EM requires $501.00s$ with $ h=0.25$ and $T=20$, while the free BEM only requires $29.04s$ with $ h=4$. Consequently, the free BEM is more efficient than the free EM. Figure \ref{g1w} also indicates that the free STMs with $\theta=0,0.25,0.4$ are not stable with large step sizes $ h>h_{\max} $.

\begin{figure}[!htbp]
	\centering
	\centering
	\begin{minipage}{0.32\linewidth}
		\vspace{3pt}
		\centerline{\includegraphics[width=\textwidth]{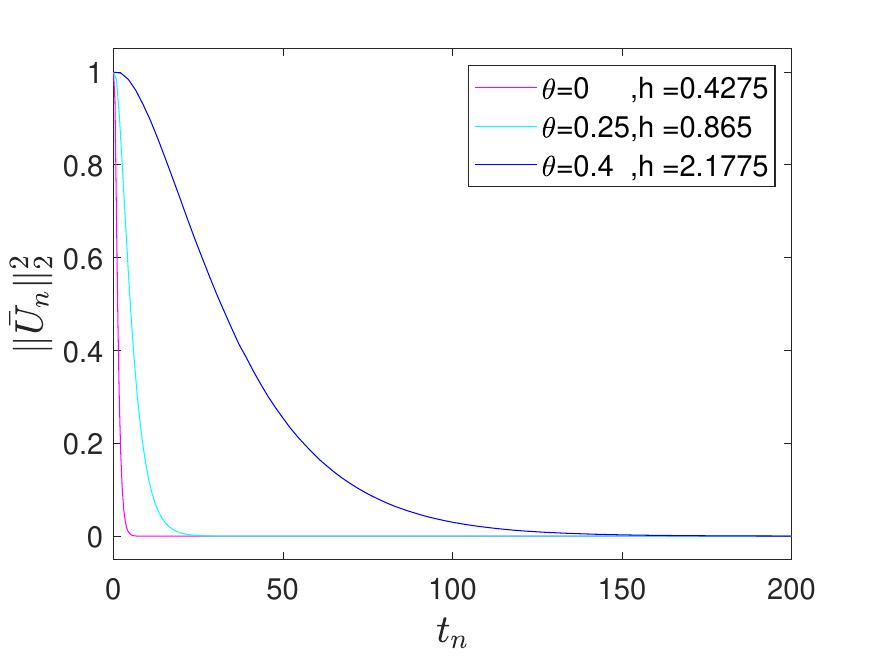}}
		\centerline{{\scriptsize\textit{free STMs with $0\leq\theta<0.5$}}}
	\end{minipage}
	\begin{minipage}{0.32\linewidth}
		\vspace{3pt}
		\centerline{\includegraphics[width=\textwidth]{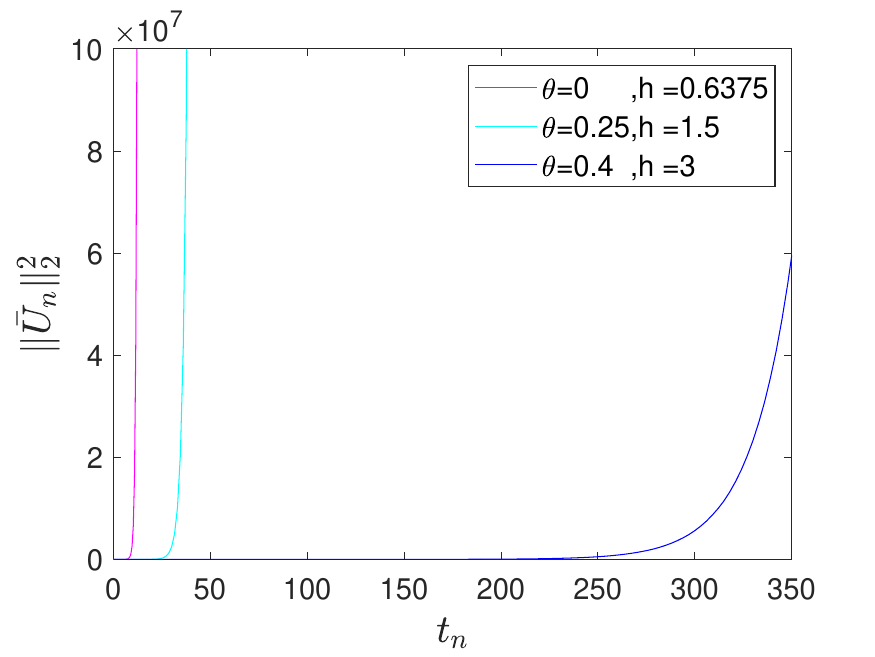}}
		\centerline{{\scriptsize\textit{ free STMs with $0\leq\theta<0.5$}}}
	\end{minipage}
	
	\begin{minipage}{0.32\linewidth}
		\vspace{3pt}
		\centerline{\includegraphics[width=\textwidth]{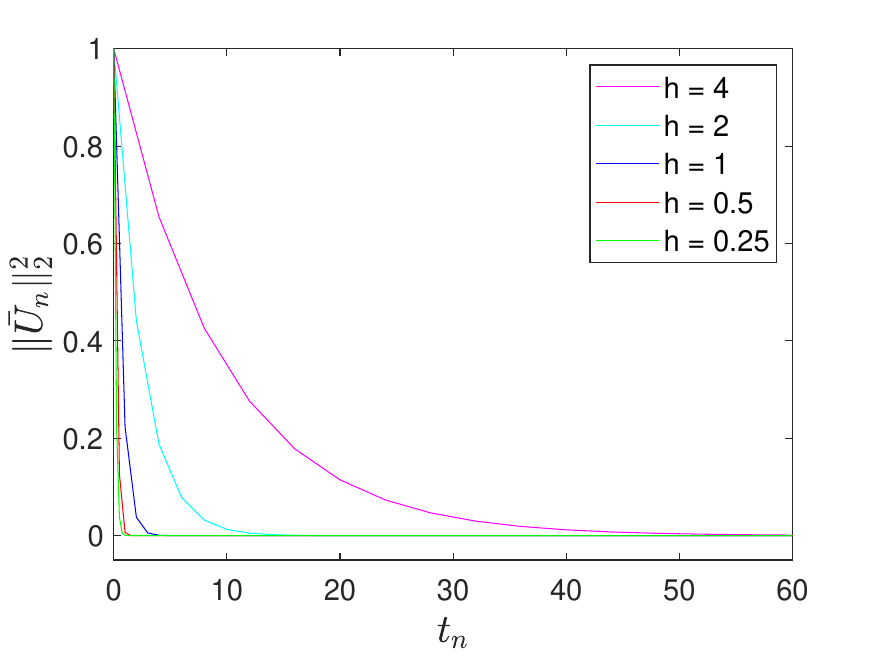}}
		\centerline{{\scriptsize\textit{ free STM with $\theta=0.5$}}}
	\end{minipage}
	\begin{minipage}[h]{0.32\linewidth}
		\centering
		\includegraphics[width=\textwidth]{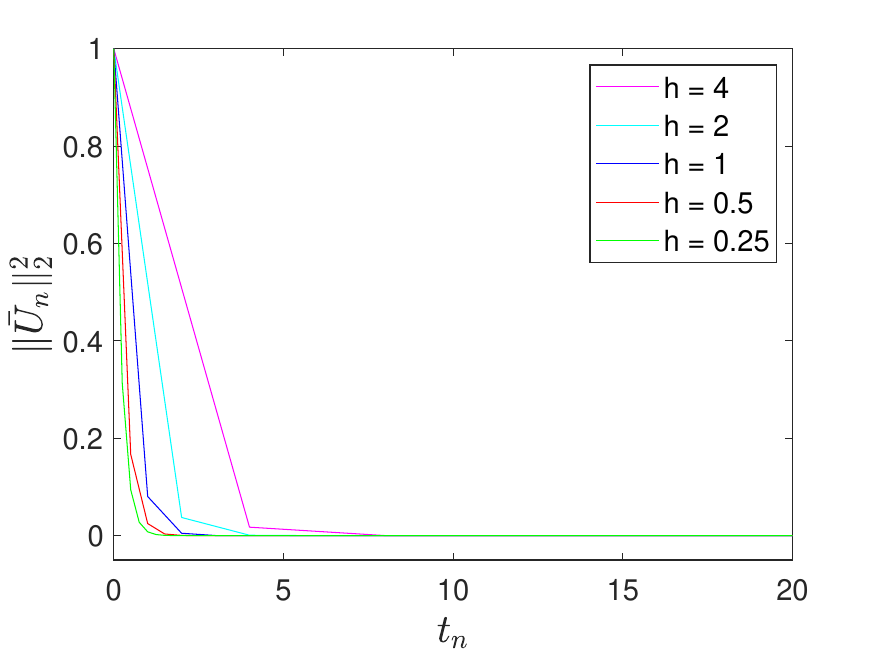}
		\centerline{{\scriptsize\textit{free BEM}}}
	\end{minipage}
	\caption{The exponential stability in mean square of free STMs applied to \eqref{geo1}.}\label{g1w}
\end{figure}

\subsection{Free Geometric Brownian Motion II }

Consider the free GBM II 
\begin{equation}\label{geo}
{\rm d} U_t =\mu U_t {\rm dt} +U_t {\rm dW_t} + {\rm dW_t} U_t, \; \mu \in \mathbb{R}. 
\end{equation} 
Suppose that $U_0 = {\bf 1}$. It is difficult to obtain the explicit solution; however, the expectation and variance of the exact solution are $e^{\mu t}$ and $2e^{2\mu t}(e^{2t}-1)$, respectively (see \cite{karginFreeStochasticDifferential2011}). See Figure \ref{g2t} for the spectrum distribution of the numerical approximation (by using the free BEM). And Table \ref{g2b} shows that the expectation (resp. variance) of the spectrum distribution of the numerical solution tends to the expectation (resp. variance) of the exact solution for sufficiently lager $N$. Figure \ref{g2s} shows that free STMs give errors that decrease proportionally to $h^{1/2}$ as expected.
\begin{figure}[h]
	\centering
	\begin{minipage}{0.32\linewidth}
		\vspace{3pt}
		\centerline{\includegraphics[width=\textwidth]{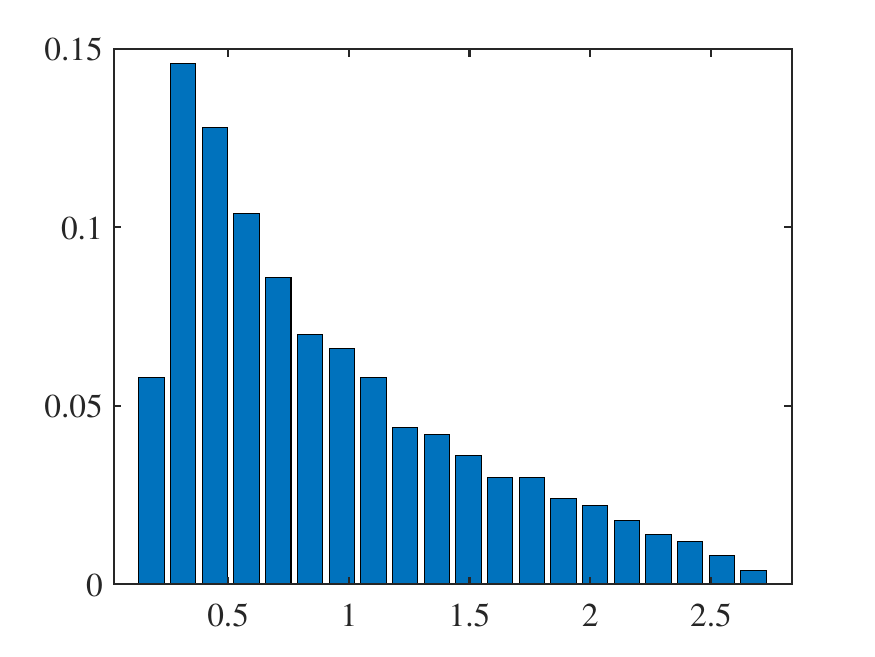}}
		\centerline{{\scriptsize$ T=100h, N=500 $}}
	\end{minipage}
	\begin{minipage}{0.32\linewidth}
		\vspace{3pt}
		\centerline{\includegraphics[width=\textwidth]{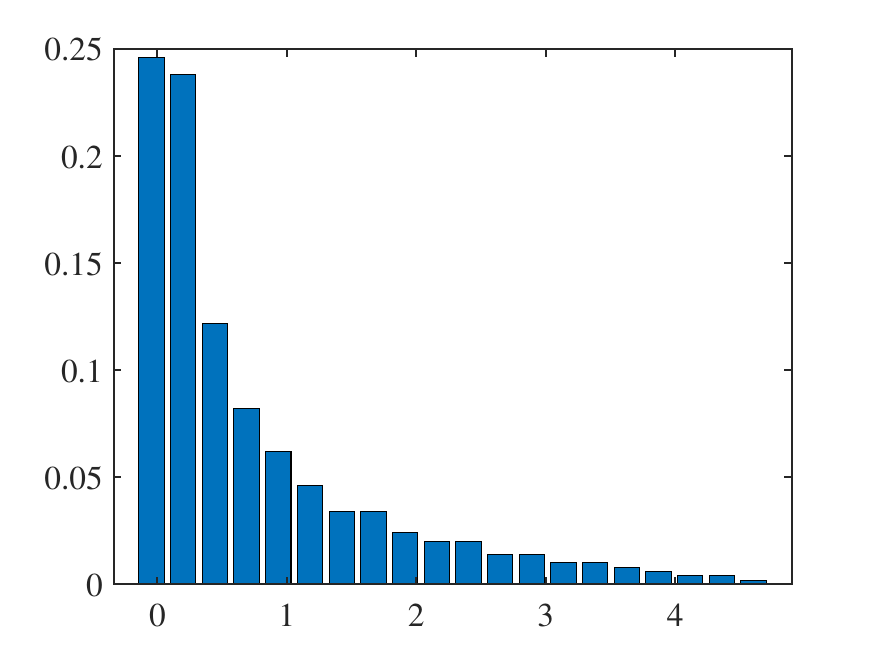}}
		\centerline{{\scriptsize$ T=300h, N=500 $}}
	\end{minipage}
	\begin{minipage}{0.32\linewidth}
		\vspace{3pt}
		\centerline{\includegraphics[width=\textwidth]{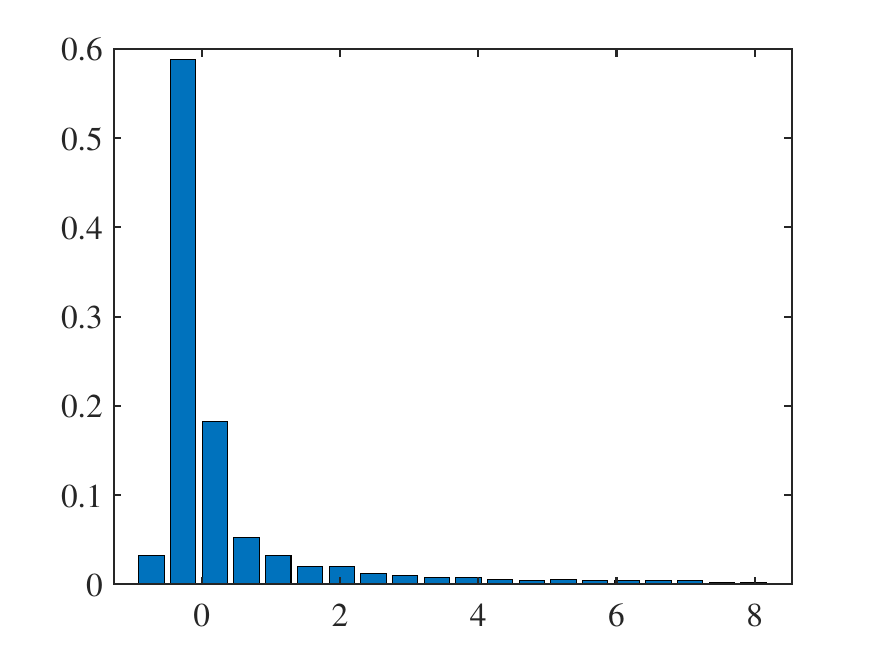}}
		\centerline{{\scriptsize$ T=1024h, N=500 $}}
	\end{minipage}
	\caption{The spectrum distribution of the numerical solution $ \bar{U}_P^{(N)} $ of \eqref{geo} generated by free BEM with $\mu=-1$.}\label{g2t}
\end{figure}

\begin{table}[!htbp]
\caption{The expectation and variance of the solutions to \eqref{geo} with $\mu=-1, h =2^{-10}, $ and $N=500$. \label{g2b}}%
\begin{tabular*}{\columnwidth}{@{\extracolsep\fill}llll@{\extracolsep\fill}}
\toprule
                                          &$T=100h$&$T=300h$&$T=1024h$\\
\midrule
     $\mathbb{E}[tr_N(\bar{U}_P^{(N)})]$   & $0.9094$  & $0.7481$  &$0.3680$\\
    $\psi(U_T)$                     & $0.9070$  & $0.7460$  &$0.3679$\\
\midrule
    $^{1}$ $var_N(\bar{U}_P^{(N)})$ & $0.3564$  & $0.8994$  &$1.7354$\\
    $^{2}$ $var(U_T)$                    & $0.3548$  & $0.8868$  &$1.7293$\\
\bottomrule
\end{tabular*}
\begin{tablenotes}%
\item $^{1}$ $var_N(\bar{U}_P^{(N)}):=\mathbb{E}[tr_N(\bar{U}_P^{(N)})-\mathbb{E}[tr_N(\bar{U}_P^{(N)})]]^2$.
\item $^{2}$ $var(U_T):=\psi|U_T-\psi(U_T)|^2$.
\end{tablenotes}
\end{table}

\begin{figure}[!htbp]
	\centering
	\centering
	\begin{minipage}{0.32\linewidth}
		\vspace{3pt}
		\centerline{\includegraphics[width=\textwidth]{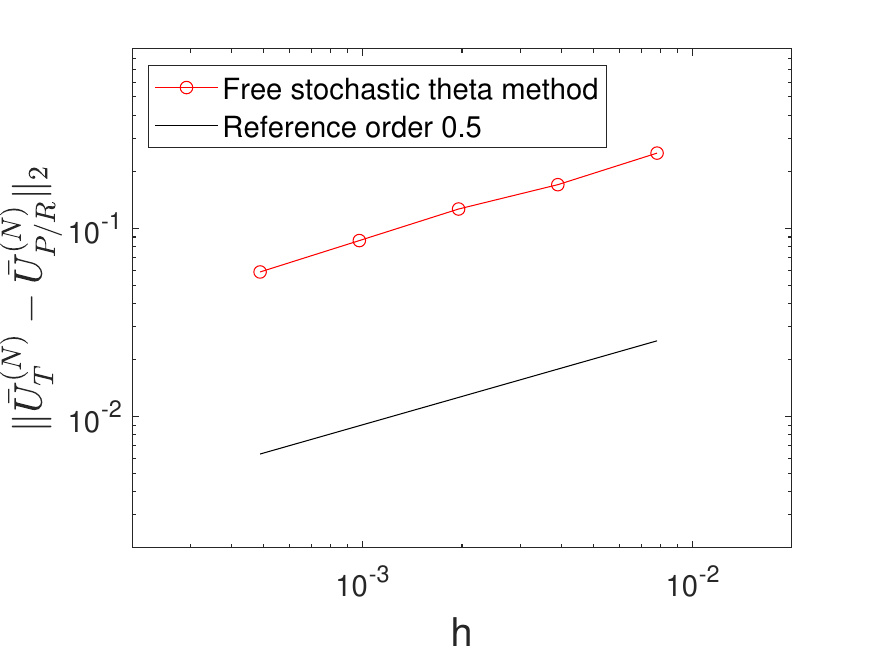}}
		\centerline{{\scriptsize$ \theta =0$}}
	\end{minipage}
	\begin{minipage}{0.32\linewidth}
		\vspace{3pt}
		\centerline{\includegraphics[width=\textwidth]{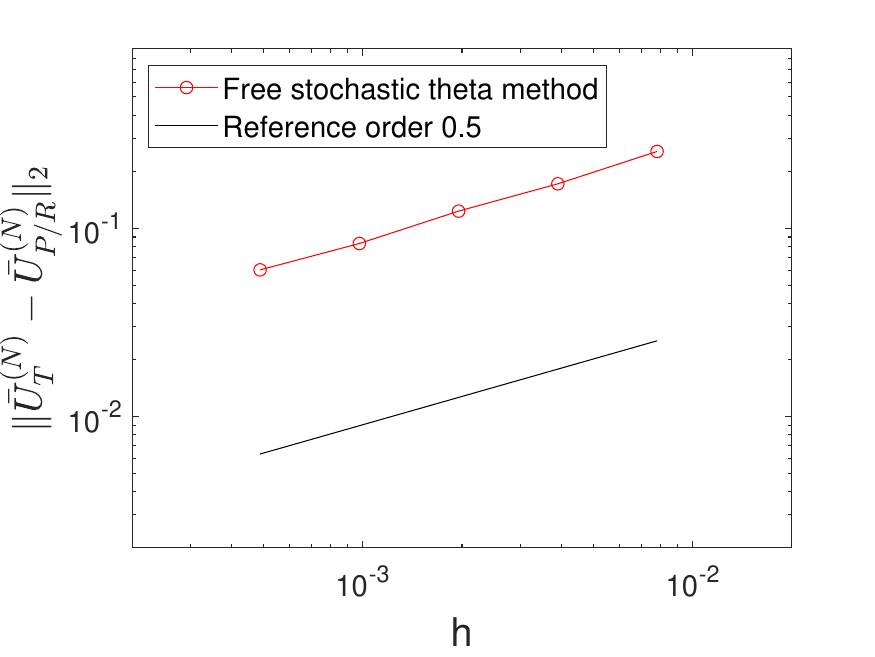}}		
		\centerline{{\scriptsize$ \theta =0.5$}}
	\end{minipage}
	\begin{minipage}{0.32\linewidth}
		\vspace{3pt}
		\centerline{\includegraphics[width=\textwidth]{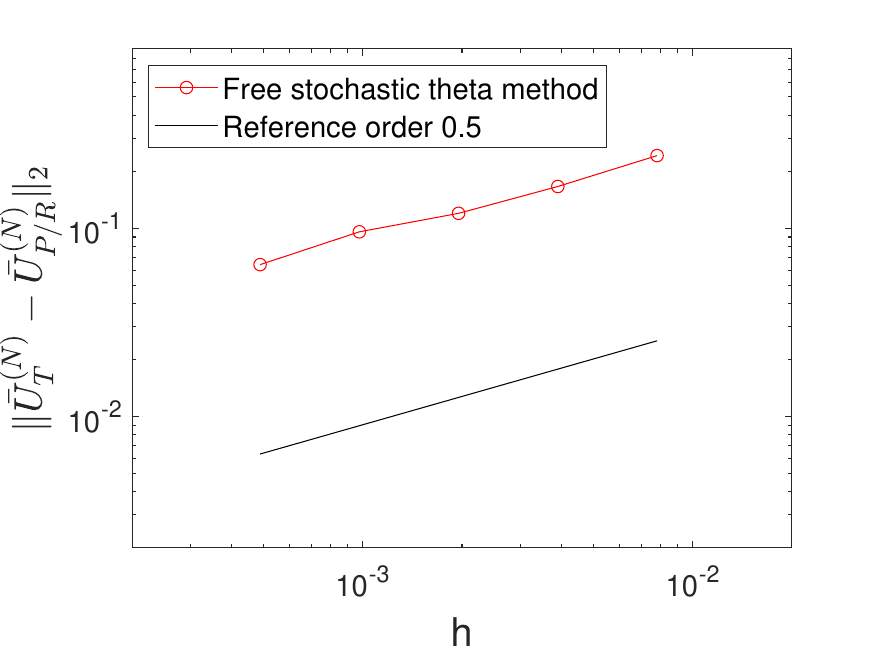}}		
		\centerline{{\scriptsize$ \theta =1$}}
	\end{minipage}
	\caption{Strong convergence order simulations for the free STMs applied to \eqref{geo} with $\mu=-1$.}\label{g2s}
\end{figure}

Evidently, the free GBM II has a trivial solution. According to Theorem \ref{12wdx}, we set $\bar{L}=-\mu-2,\ \bar{K}=\mu^2,$ and let $\mu <-2.$ Then for $\theta \in [0, 1/2),$ the free STMs are exponentially stable when $h<\frac{-2\mu-4}{(1-2\theta)\mu^2}=:h_{\max}$. And for $\theta \in [1/2, 1],$ the free STMs are exponentially stable for any given $h>0.$ 

We choose $ N=100$ and $ \mu=-4$. Figure \ref{g2w} shows the exponential stability in mean square of the free STMs with $h<h_{\max}$ for different $\theta=0,0.25,0.4$ and the free STMs with different step sizes ($ h=0.25,0.5,1,2,4$) for different $\theta=0.5,1$. Figure \ref{g2w} also indicates that the free STMs with $\theta=0,0.25,0.4$ are not stable with large step sizes $h>h_{\max} $. Moreover, on the premise of inheriting the exponential stability of original equation, the free EM requires $35.12s$ with $h=0.24$ and $T=20$, while the free BEM requires only $4.70s$ with $h=4$.
\begin{figure}[!htbp]
			\centering
	\begin{minipage}{0.32\linewidth}
		\vspace{3pt}
		\centerline{\includegraphics[width=\textwidth]{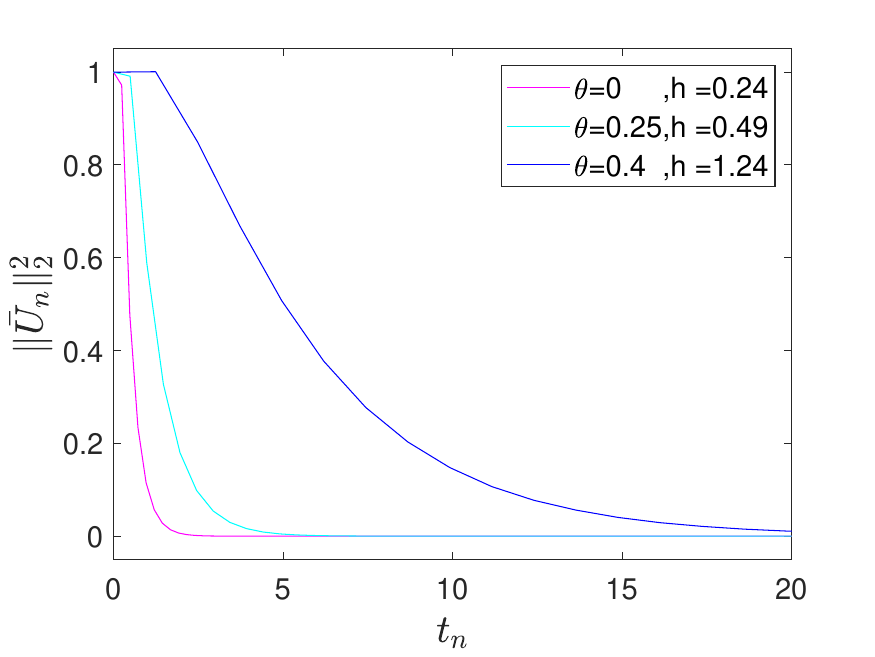}}
		\centerline{{\scriptsize\textit{free STMs with $0\leq\theta<0.5$}}}
	\end{minipage}
	\begin{minipage}{0.32\linewidth}
		\vspace{3pt}
		\centerline{\includegraphics[width=\textwidth]{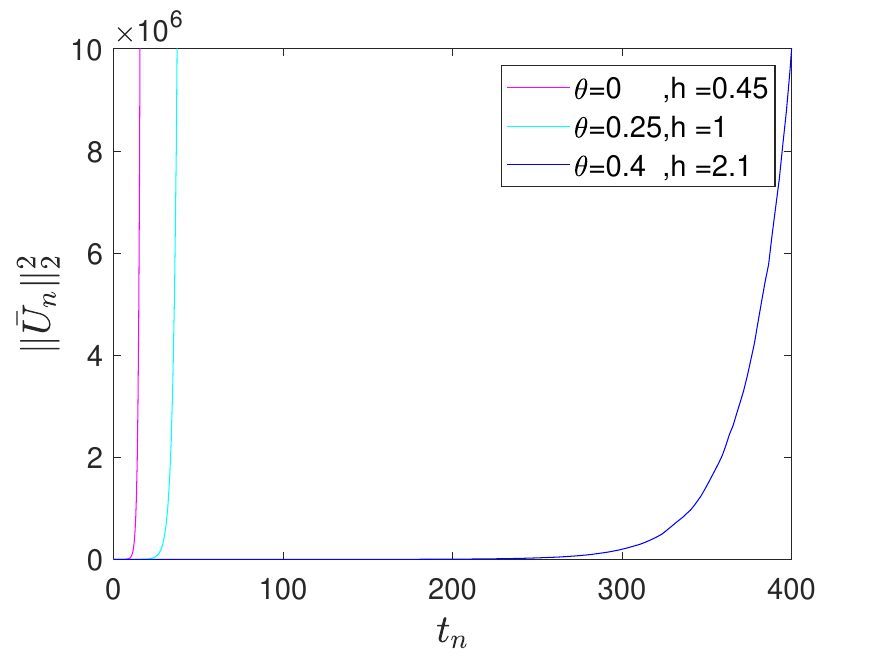}}
		\centerline{{\scriptsize\textit{ free STMs with $0\leq\theta<0.5$}}}
	\end{minipage}
	
	\begin{minipage}{0.32\linewidth}
		\vspace{3pt}
		\centerline{\includegraphics[width=\textwidth]{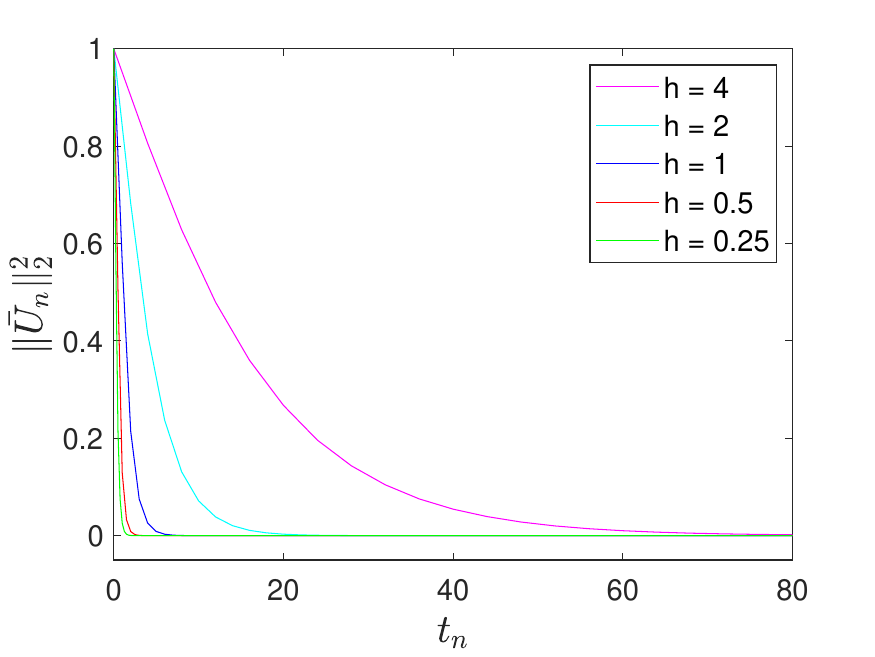}}
		\centerline{{\scriptsize\textit{ free STM with $\theta=0.5$}}}
	\end{minipage}
	\begin{minipage}{0.32\linewidth}
		\vspace{3pt}
		\centerline{\includegraphics[width=\textwidth]{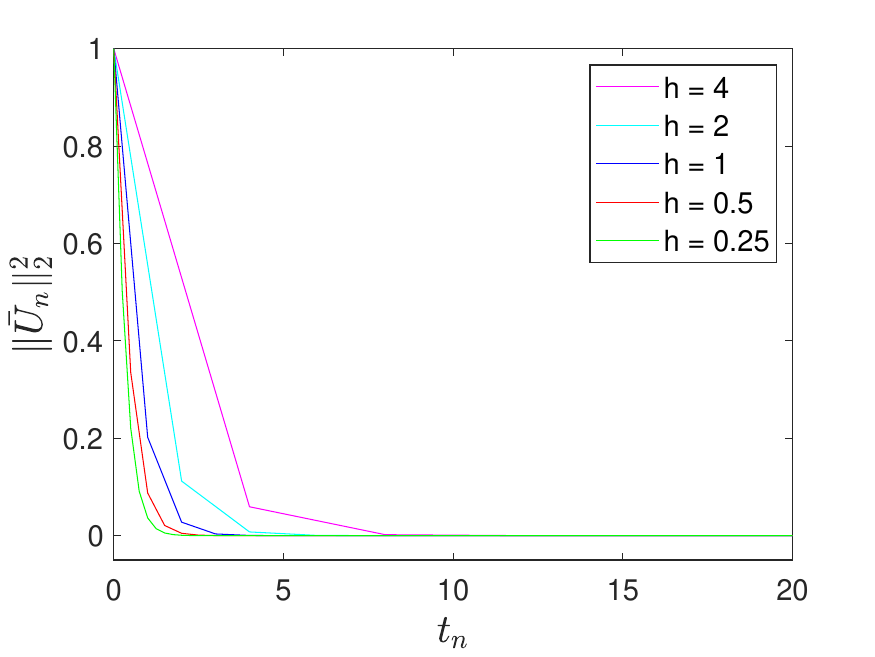}}
		\centerline{{\scriptsize\textit{free BEM}}}
	\end{minipage}
	\caption{The exponential stability in mean square of free STMs applied to \eqref{geo}.}\label{g2w}
\end{figure}

\subsection{Free Cox-Ingersoll-Ross Equation}\label{section:cir}
Consider the free Cox-Ingersoll-Ross (free CIR) equation
\begin{equation}\label{cir}
{\rm d} U_t =(\alpha-\beta U_t ){\rm dt} + \frac{\sigma}{2}\sqrt{U_t} {\rm d} W_t+ \frac{\sigma}{2} {\rm d} W_t \sqrt{U_t}, 
\end{equation} 
where $ \alpha,\beta, \sigma \ge 0 $ and satisfy the Feller condition $ 2\alpha \ge \sigma^2 $. Suppose that $U_0 = {\bf 1}.$ It follows that $\psi(U_t)=\frac{e^{-\beta t}}{\beta}(\beta+\alpha(e^{\beta t}-1))$. Figure \ref{cirt} shows the spectrum distribution of the numerical solution obtained by the free BEM. And Table \ref{cirb} shows that the expectation of the spectrum distribution of the numerical solution tends to the expectation of the exact solution for sufficiently lager $N$. Figure \ref{cirs} shows that the mean square errors decrease at a slope approaching {1/2}, which confirms the predicted convergence result.
\begin{figure}[!htbp]
	\centering
	\centering
	\begin{minipage}{0.32\linewidth}
		\vspace{3pt}
		\centerline{\includegraphics[width=\textwidth]{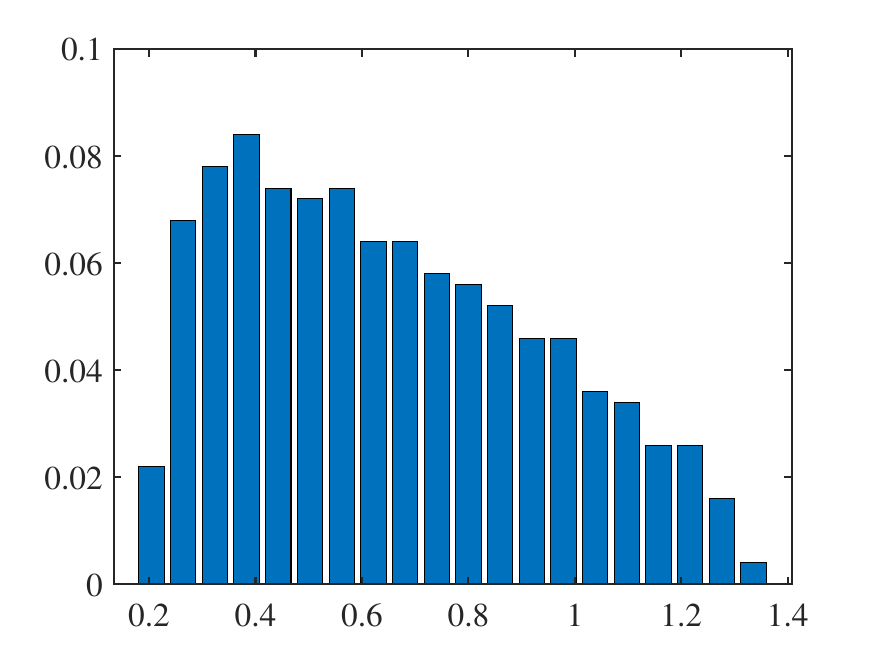}}
		\centerline{{\scriptsize$ T=300h, N=500 $}}
	\end{minipage}
	\begin{minipage}{0.32\linewidth}
		\vspace{3pt}
		\centerline{\includegraphics[width=\textwidth]{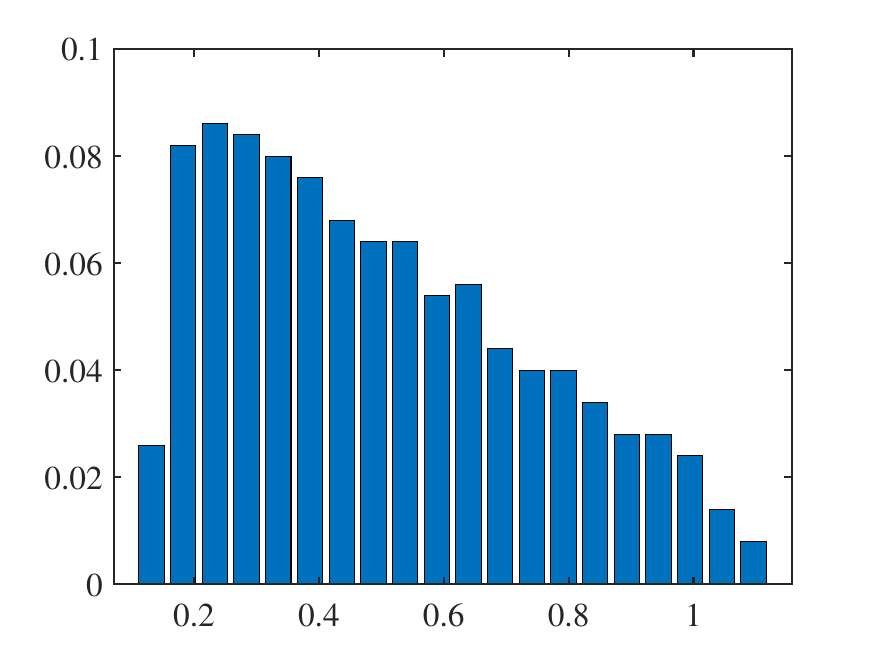}}
		\centerline{{\scriptsize$ T=4000h, N=500 $}}
	\end{minipage}
	\caption{The spectrum distribution of the numerical solution $ \bar{U}_P^{(N)} $ of \eqref{cir} generated by free BEM with $ \alpha=2, \beta=4, $ and $\sigma=1$.}\label{cirt}
\end{figure}

\begin{table}[!htbp]
\caption{The expectation of the solutions to \eqref{cir} with $\alpha=2, \beta=4, \sigma=1, h =2^{-10},$ and $N=500$. \label{cirb}}%
\begin{tabular*}{\columnwidth}{@{\extracolsep\fill}llll@{\extracolsep\fill}}
\toprule
                                             &$T=300h$&$T=1024h$\\

\midrule
    $\mathbb{E}[tr_N(\bar{U}_P^{(N)})]$   & $0.6557$  &$0.5002$\\
    $\psi(U_T)$                      & $0.6549 $ &$0.5000$\\
\bottomrule
\end{tabular*}
\end{table}

\begin{figure}[!htbp]
	\centering
	\centering
	\begin{minipage}{0.32\linewidth}
		\vspace{3pt}
		\centerline{\includegraphics[width=\textwidth]{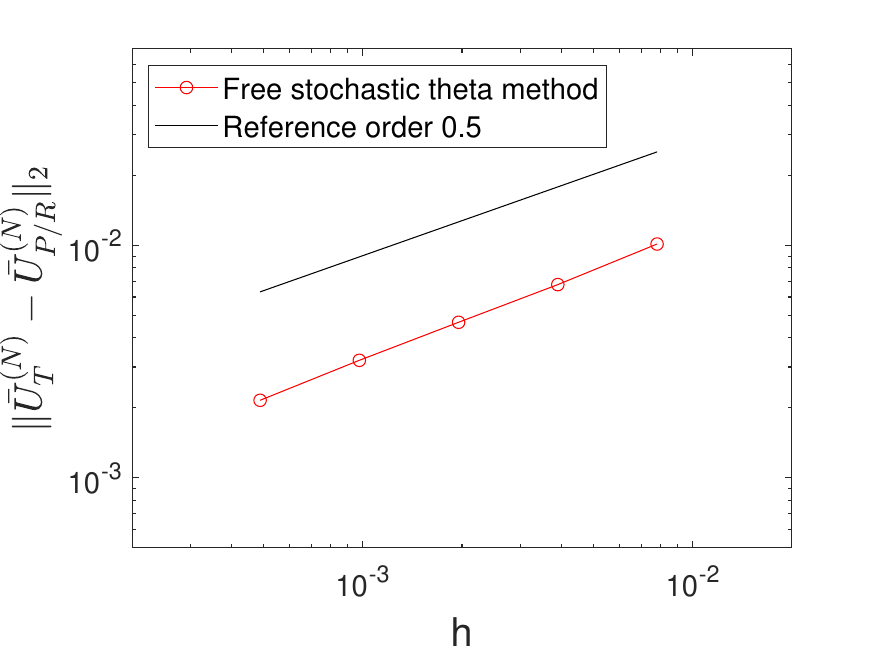}}
		\centerline{{\scriptsize$ \theta =0$}}
	\end{minipage}
	\begin{minipage}{0.32\linewidth}
		\vspace{3pt}
		\centerline{\includegraphics[width=\textwidth]{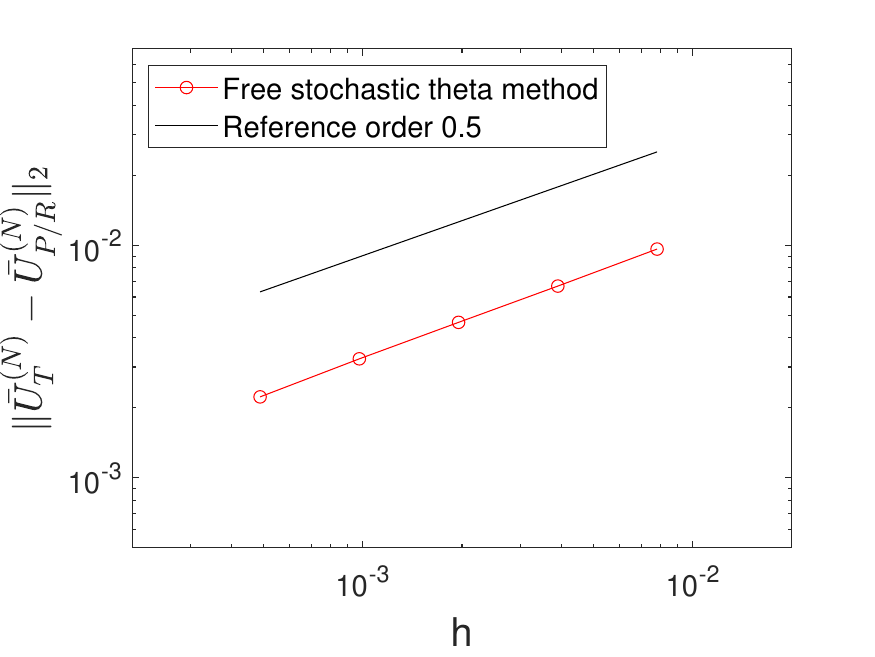}}
		
		\centerline{{\scriptsize$ \theta =0.5$}}
	\end{minipage}
	\begin{minipage}{0.32\linewidth}
		\vspace{3pt}
		\centerline{\includegraphics[width=\textwidth]{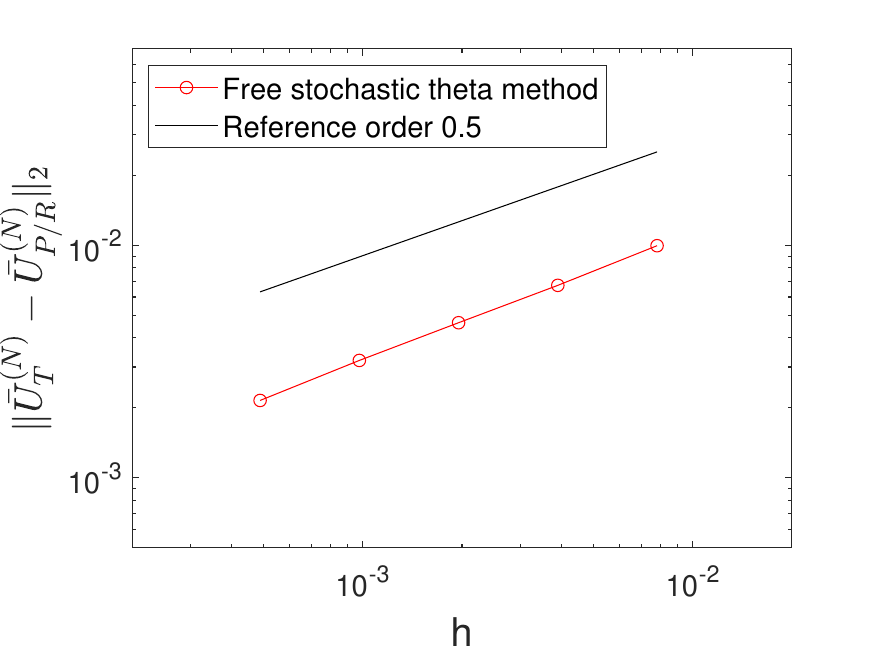}}
		
		\centerline{{\scriptsize$ \theta =1$}}
	\end{minipage}
	\caption{Strong convergence order simulations for the free STMs applied to \eqref{cir} with $\alpha=2, \beta=4,$ and $\sigma=1$.}\label{cirs}
\end{figure}

Since the free CIR equation \eqref{cir} does not have a trivial solution, we also should consider the exponential stability in the perturbative sense (see Definition \ref{def:rdjn} in Appendix \ref{sec:appendix}). Unfortunately, the parameters of the free CIR equation does not fulfill Assumption \ref{ass4}, so Theorem \ref{12wdxrd1} cannot be applied. However, our numerical experiment still shows the stability. Let $ \bar{U}^{(N)}_n $ and $ \bar{V}^{(N)}_n$ be the numerical solutions that are generated by two different initial values $\bar{U}^{(N)}_0 =E$ and $ \bar{V}^{(N)}_0 =2E,$ respectively. Choosing $N=100, \alpha=2, \beta=4, \sigma=1,$ Figure \ref{cirw} demonstrates that the free EM are not stable with $h=1$, while the free STMs with $\theta=0.5,1$ are stable for different step sizes. On the premise of inheriting the exponential stability of the original equation, the free EM requires $1162.32s$ with $ h = 0.25$ and $T=20$, while the free BEM only needs $23.45s$ with $ h = 4$.
\begin{figure}[!htbp]
			\centering
	\begin{minipage}{0.32\linewidth}
		\vspace{3pt}
		\centerline{\includegraphics[width=\textwidth]{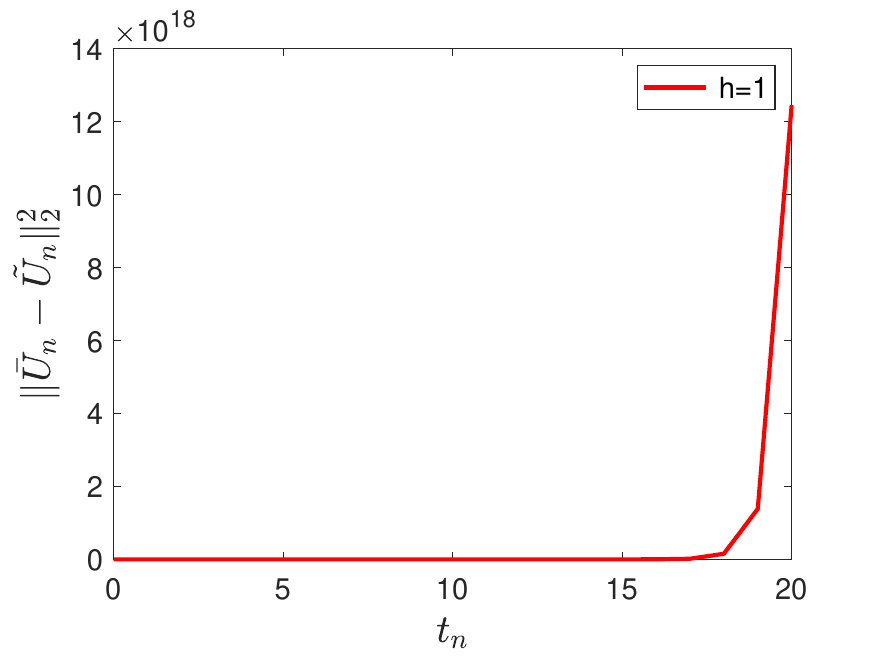}}
		\centerline{{\scriptsize\textit{free EM}}}
	\end{minipage}	
	\begin{minipage}{0.32\linewidth}
		\vspace{3pt}
		\centerline{\includegraphics[width=\textwidth]{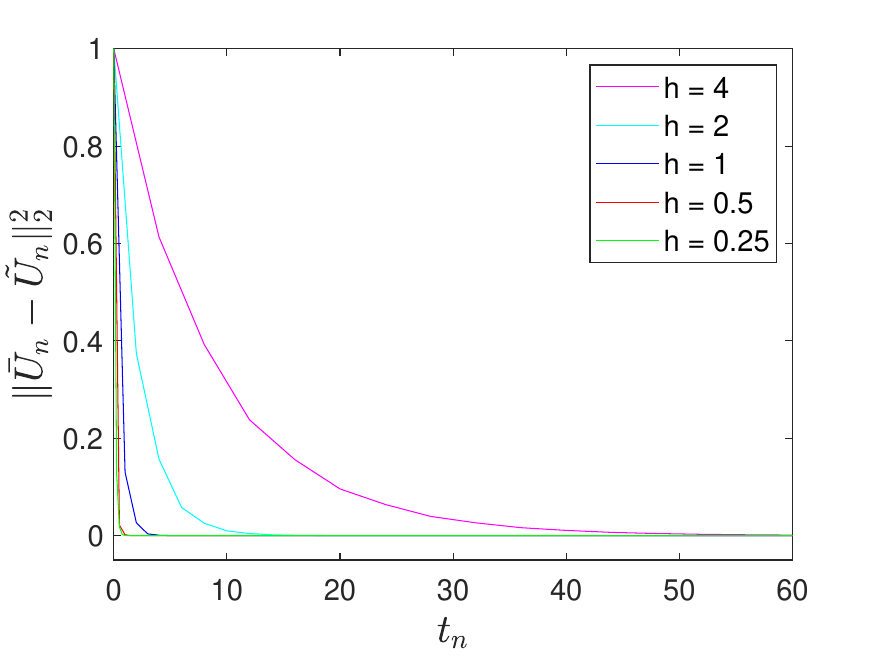}}
		\centerline{{\scriptsize\textit{ free STM with $\theta=0.5$}}}
	\end{minipage}
	\begin{minipage}{0.32\linewidth}
		\vspace{3pt}
		\centerline{\includegraphics[width=\textwidth]{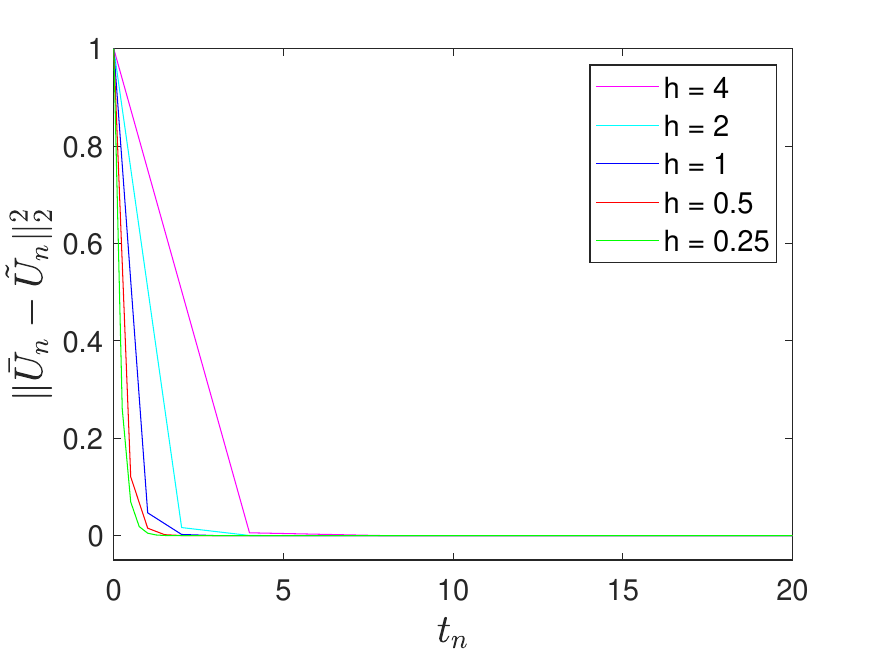}}
		\centerline{{\scriptsize\textit{free BEM}}}
	\end{minipage}
	\caption{The exponential stability in mean square of free STMs applied to \eqref{cir}.}\label{cirw}
\end{figure}

\subsection{An example for nonlinear $\alpha$}
To the best of our knowledge, the functions $\alpha$ of present-day known examples are usually linear (see the above mentioned examples). In this case, our numerical methods are “pseudo-implicit” and can be solved explicitly. It is interesting to consider a free SDE with a nonlinear $\alpha.$ Consider the following free SDE
\begin{equation}\label{usinu}
{\rm d} U_t =(\mu U_t+\nu \sin(\psi(U_t)){\bf 1} ){\rm dt} + \sigma U_t {\rm d} W_t+ \sigma {\rm d} W_t U_t,
\end{equation} 
where $ \mu,\nu,\sigma \in \mathbb{R} $. Suppose that $U_0= {\bf 1}.$ There exists an unique global solution to \eqref{usinu} (see \eqref{fSDEdanbian}); however, it is difficult to obtain the exact solution. Figure \ref{usinut} shows the spectrum distribution of the numerical solution obtained by the free BEM. Figure \ref{usinus} indicates that the mean square errors decrease at a slope approaching {1/2}, which confirms the predicted convergence result.
 \begin{figure}[!htbp]
	\centering
	\centering
	\begin{minipage}{0.32\linewidth}
		\vspace{3pt}
		\centerline{\includegraphics[width=\textwidth]{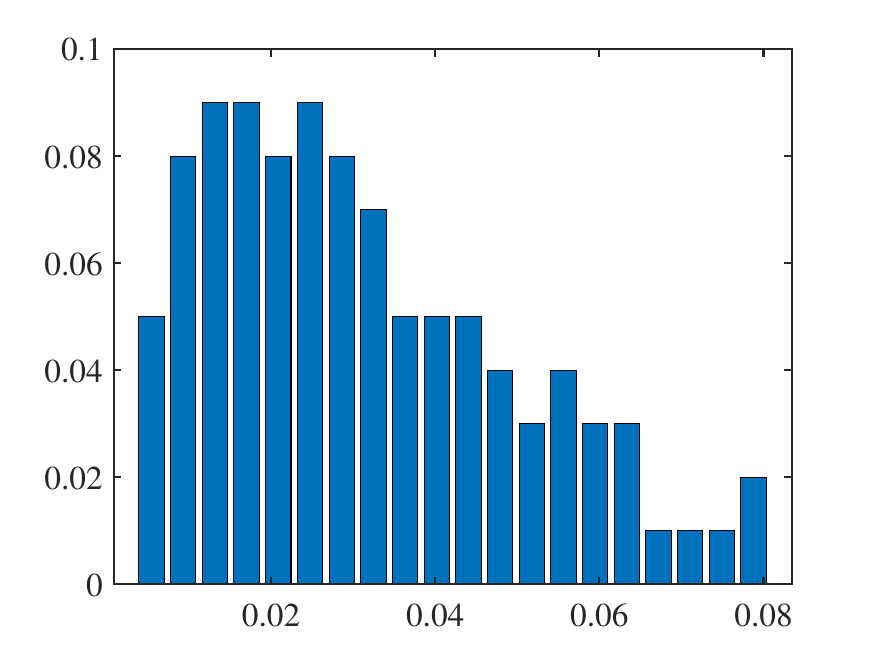}}
		\centerline{{\scriptsize$ T=300h, N=100 $}}
	\end{minipage}
	\begin{minipage}{0.32\linewidth}
		\vspace{3pt}
		\centerline{\includegraphics[width=\textwidth]{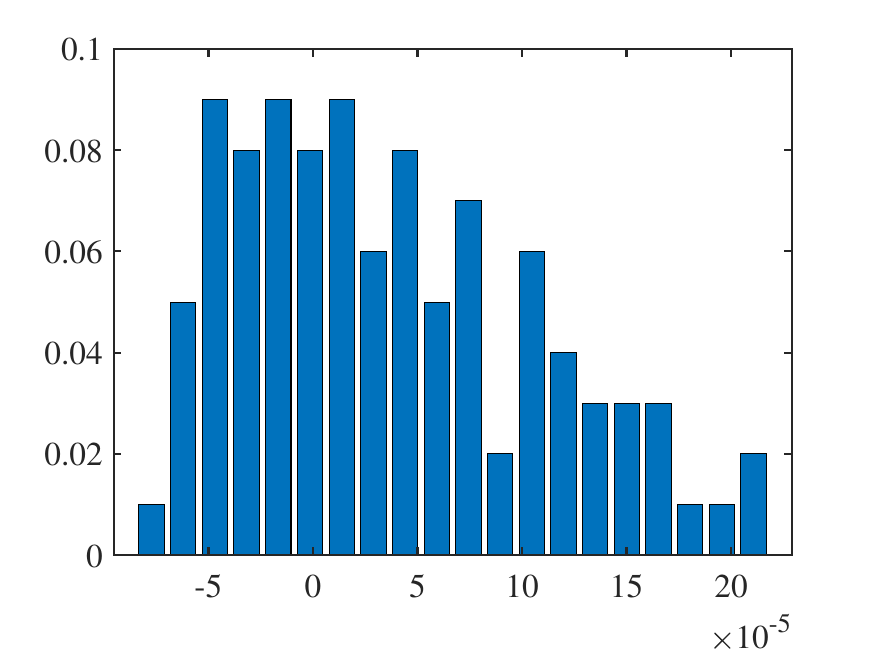}}
		\centerline{{\scriptsize$ T=1024h, N=100 $}}
	\end{minipage}
	\begin{minipage}{0.32\linewidth}
		\vspace{3pt}
		\centerline{\includegraphics[width=\textwidth]{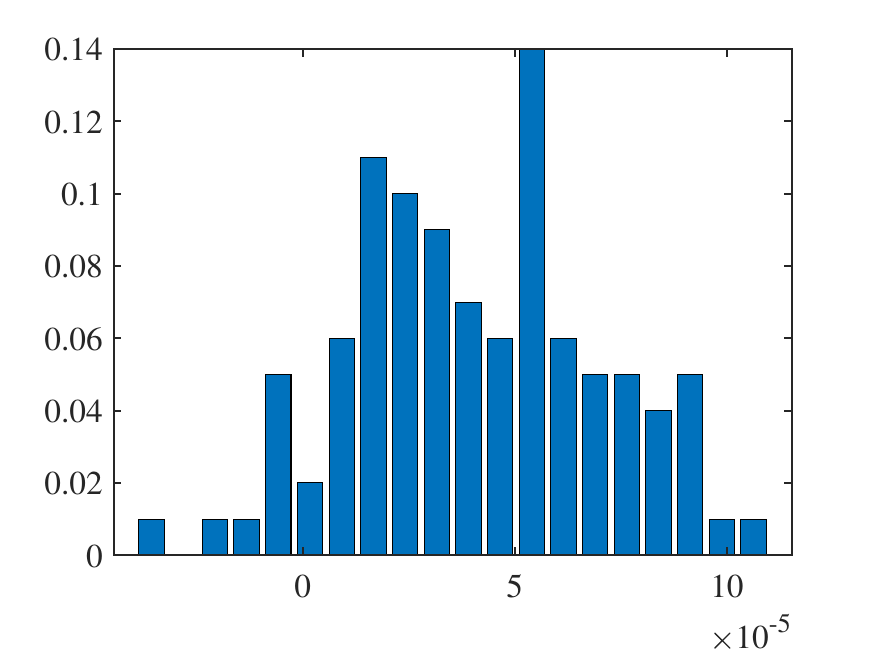}}
		\centerline{{\scriptsize$ T=4000h, N=100 $}}
	\end{minipage}
	\caption{The spectrum distribution of the numerical solution $ \bar{U}_P^{(N)} $ of \eqref{usinu} generated by free BEM with $ \mu=-8, \nu=-4, $ and $\sigma=1$.}\label{usinut}
\end{figure}

\begin{figure}[!htbp]
	\centering
	\centering
	\begin{minipage}{0.32\linewidth}
		\vspace{3pt}
		\centerline{\includegraphics[width=\textwidth]{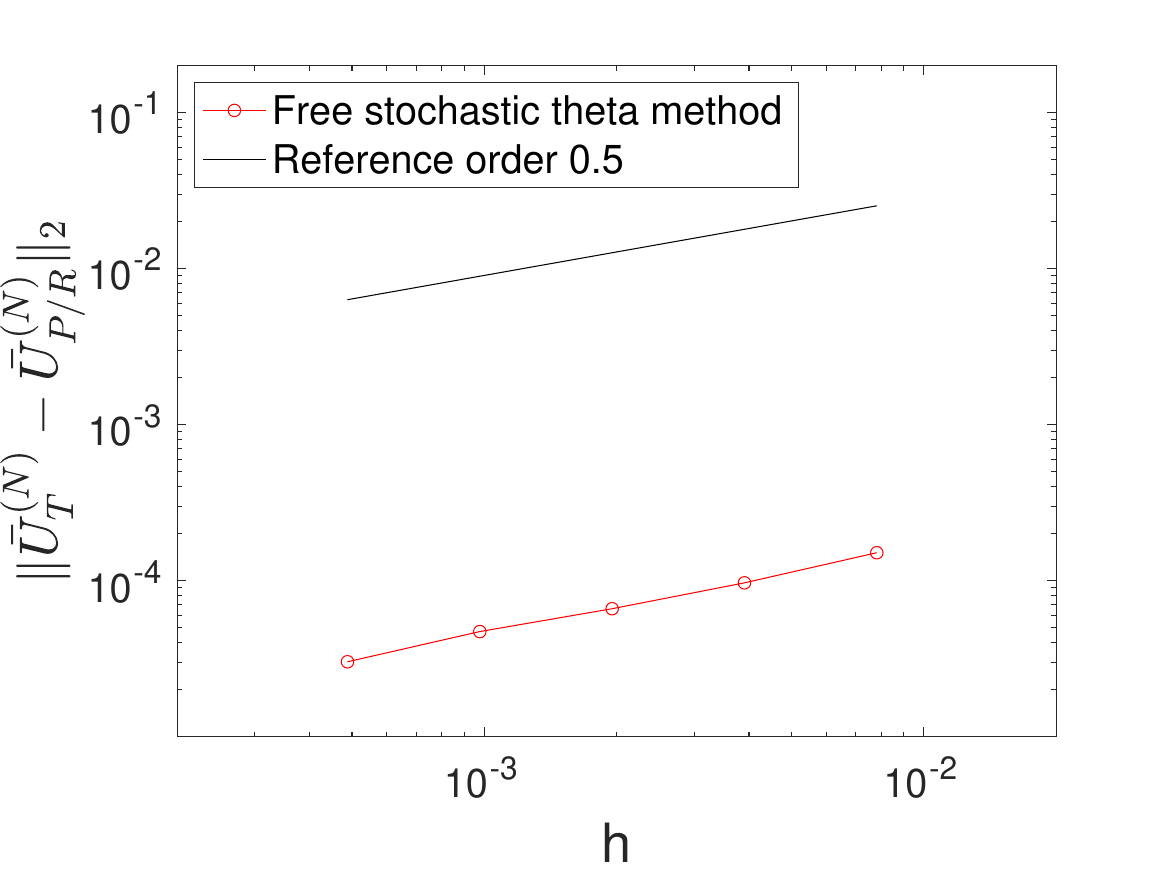}}
		\centerline{{\scriptsize$ \theta =0$}}
	\end{minipage}
	\begin{minipage}{0.32\linewidth}
		\vspace{3pt}
		\centerline{\includegraphics[width=\textwidth]{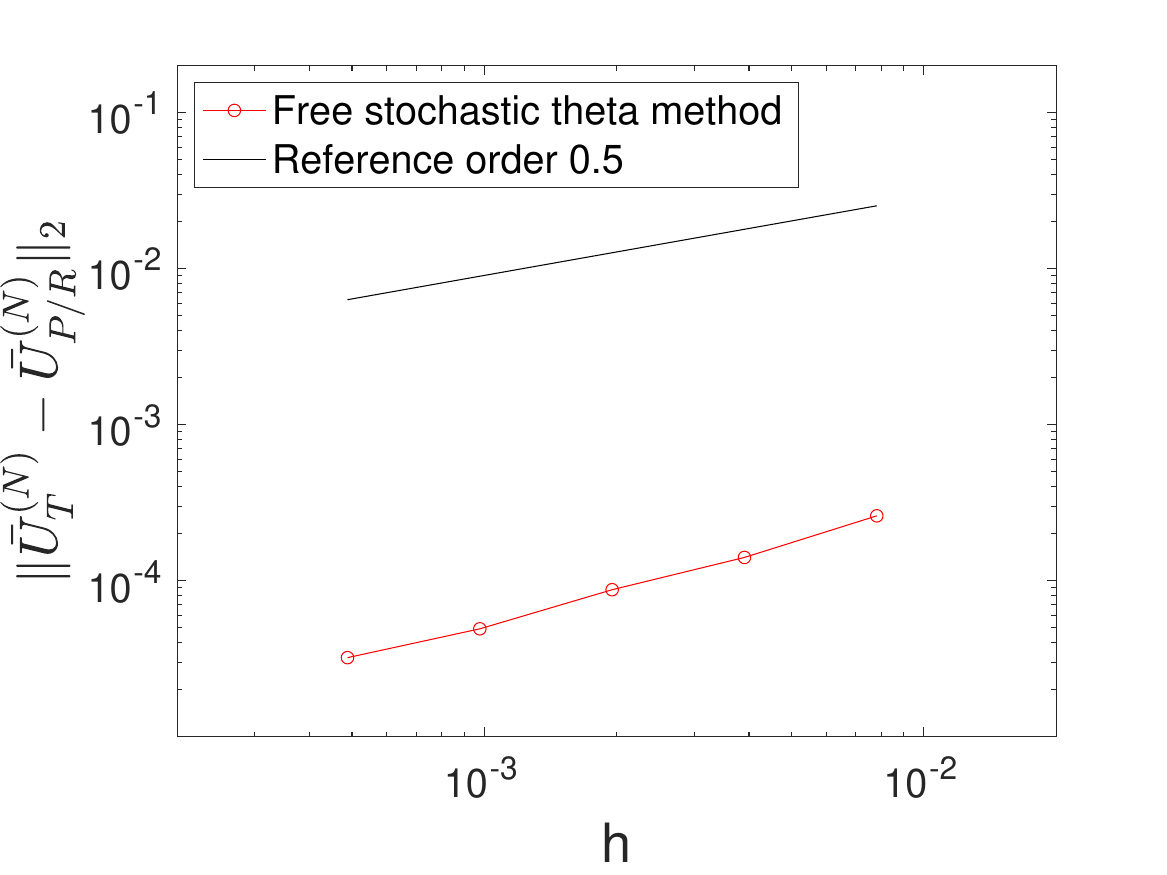}}
		
		\centerline{{\scriptsize$ \theta =0.5$}}
	\end{minipage}
	\begin{minipage}{0.32\linewidth}
		\vspace{3pt}
		\centerline{\includegraphics[width=\textwidth]{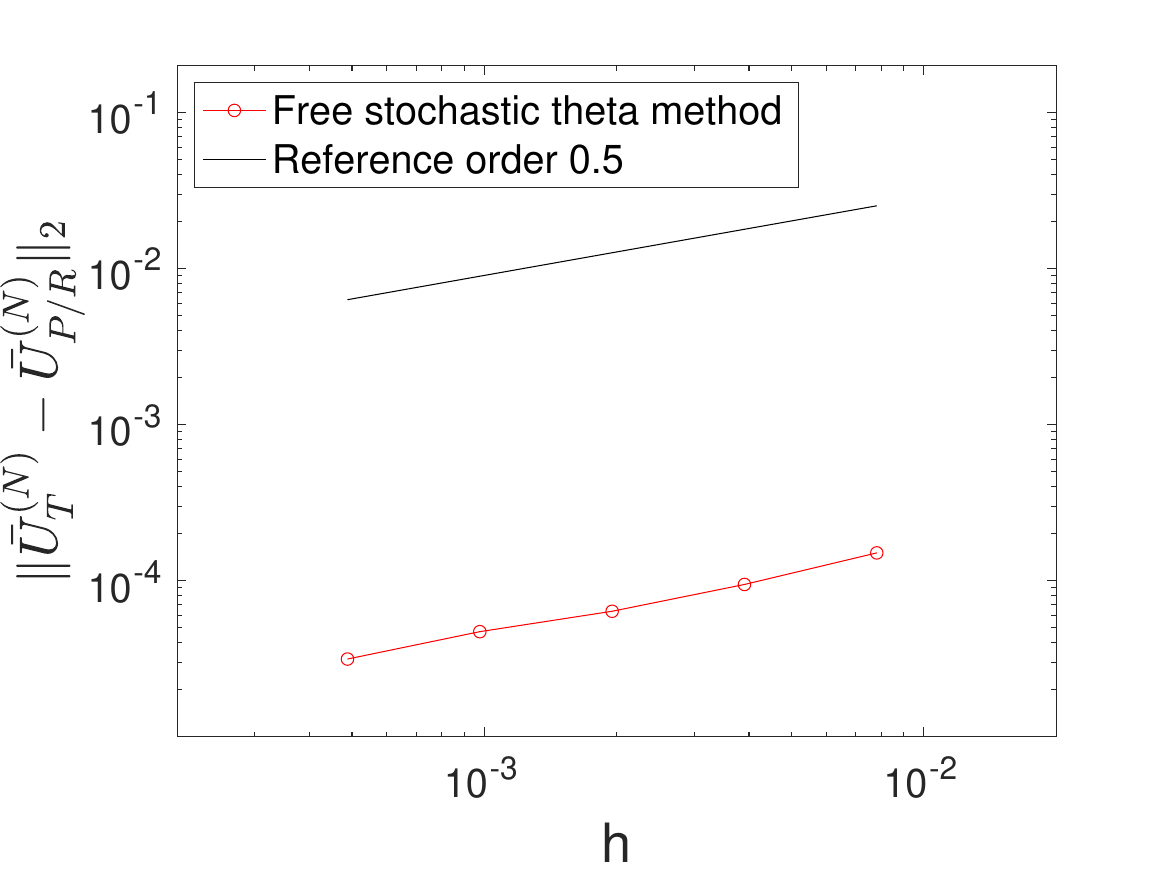}}
		
		\centerline{{\scriptsize$ \theta =1$}}
	\end{minipage}
	\caption{Strong convergence order simulations for the free STMs applied to \eqref{usinu} with $ \mu=-8, \nu=-4, $ and $\sigma=1$.}\label{usinus}
\end{figure}

Note that the free SDE \eqref{usinu} has a trivial solution. By Theorem \ref{12wdx}, we set $\bar{L}=-\mu-|\nu|-2|\sigma|,\ \bar{K}=2(\mu^2+\nu^2),$ and let $\mu+|\nu|+2|\sigma|<0.$ Then for $\theta \in [0,1/2),$ the free STMs are exponentially stable when $h<\frac{-2\mu-2|\nu|-4|\sigma|}{2(1-2\theta)(\mu^2+\nu^2)}:=h_{\max}$. And for $\theta \in [1/2, 1],$ the free STMs are exponentially stable for any given $h > 0.$

We choose $N=100, \mu=-8, \nu=-4,$ and $\sigma=1.$ Figure \ref{usinuw} shows the exponential stability of the free STMs with $h<h_{\max}$ for different $\theta=0,0.25,0.4$ and the free STMs with different step sizes ($h=0.25,0.5,1,2,4$) for $\theta=0.5,1$. On the premise of inheriting the exponential stability of the original equation, the free EM requires $522.48s$ with $ h = 0.015$ and $T=20$, while the free BEM only needs $5.43s$ with $ h = 4$. Figure \ref{usinuw} also demonstrates that the free STMs with $\theta=0,0.25,0.4$ are not stable with large step sizes $ h>h_{\max}$. 
\begin{figure}[!htbp]
			\centering
	\begin{minipage}{0.32\linewidth}
		\vspace{3pt}
		\centerline{\includegraphics[width=\textwidth]{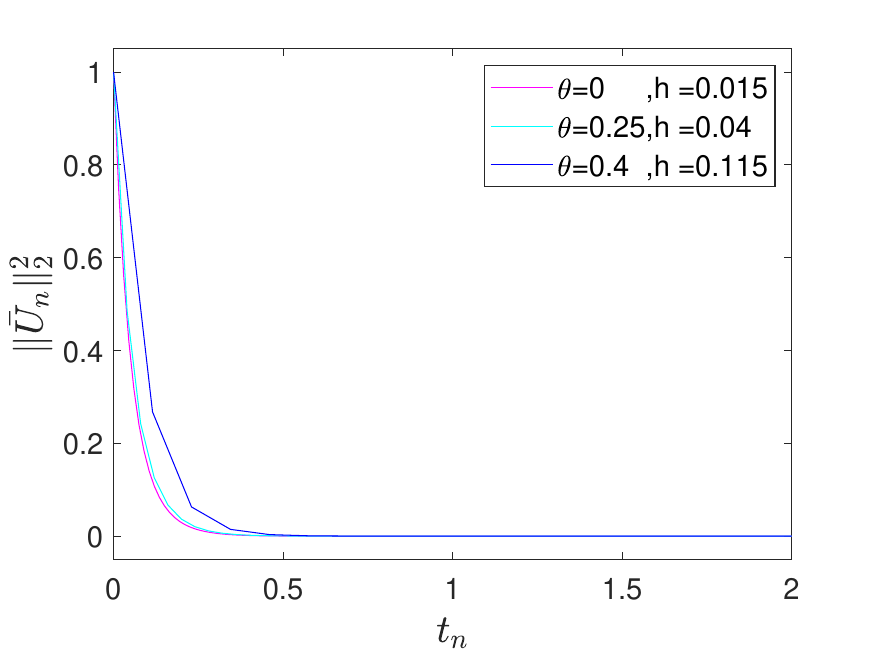}}
		\centerline{{\scriptsize\textit{free STMs with $0\leq\theta<0.5$}}}
	\end{minipage}
	\begin{minipage}{0.32\linewidth}
		\vspace{3pt}
		\centerline{\includegraphics[width=\textwidth]{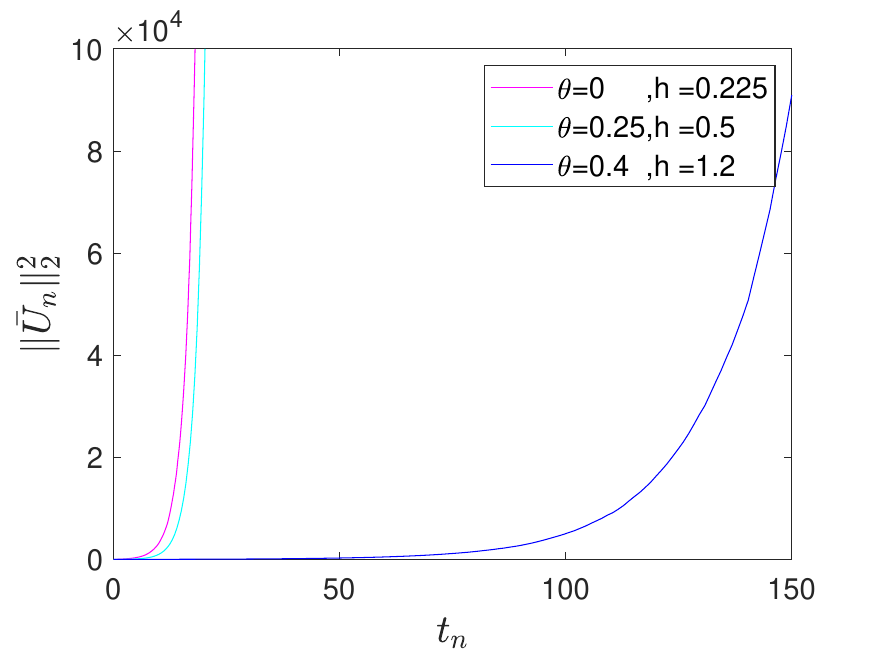}}
		\centerline{{\scriptsize\textit{ free STMs with $0\leq\theta<0.5$}}}
	\end{minipage}
	
	\begin{minipage}{0.32\linewidth}
		\vspace{3pt}
		\centerline{\includegraphics[width=\textwidth]{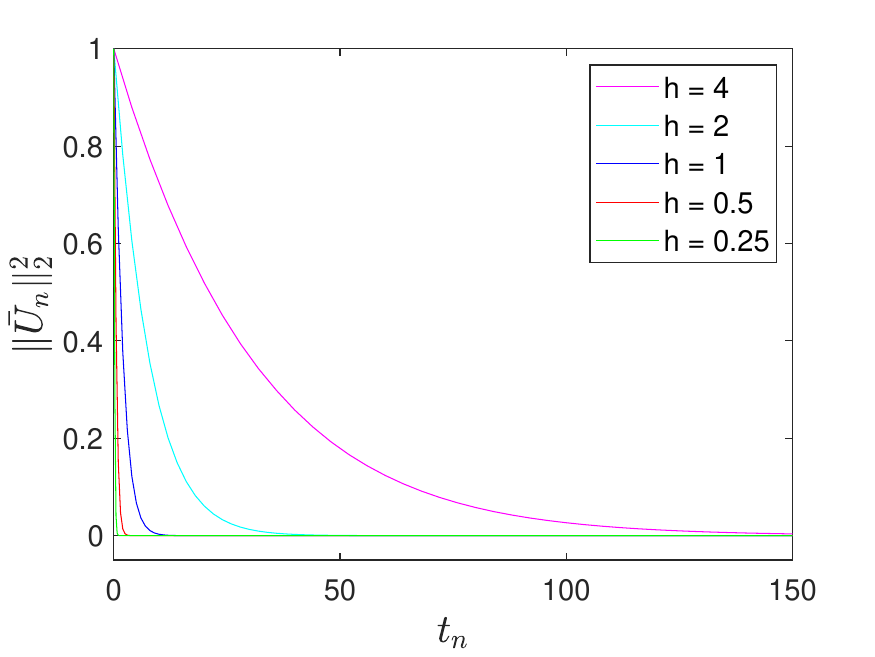}}
		\centerline{{\scriptsize\textit{ free STM with $\theta=0.5$}}}
	\end{minipage}
	\begin{minipage}{0.32\linewidth}
		\vspace{3pt}
		\centerline{\includegraphics[width=\textwidth]{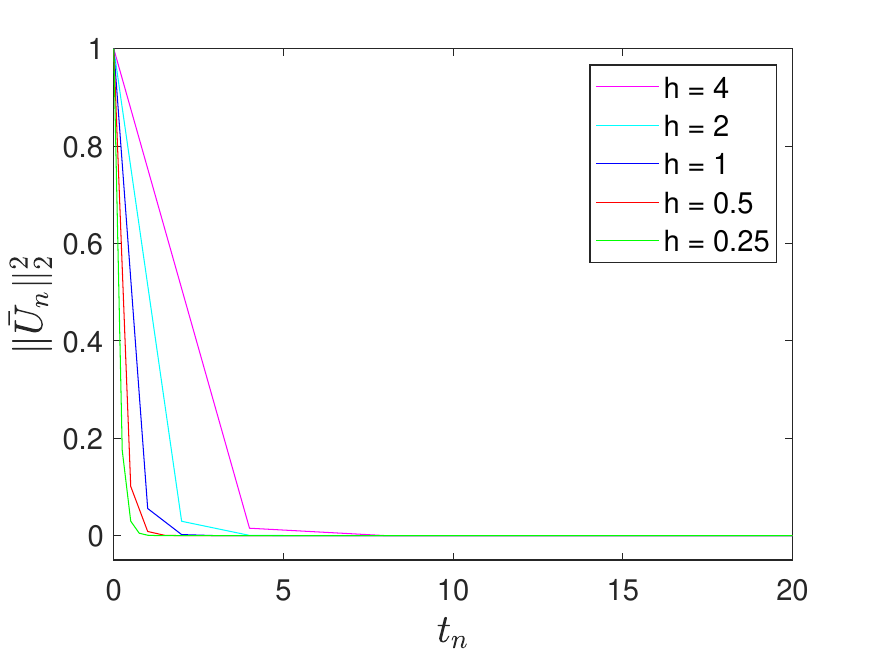}}
		\centerline{{\scriptsize\textit{free BEM}}}
	\end{minipage}
	\caption{The exponential stability in mean square of free STMs applied to \eqref{usinu}.}\label{usinuw}
\end{figure}

\vspace{2mm}

\section{Conclusions}
\label{sec:conclusions}
In this paper, we derived a family of free STMs with parameter $\theta \in[0,1]$ for numerically solving free SDEs. We recovered the standard convergence order $1/2$ of free STMs for free SDEs and derived conditions of exponential stability in mean square of the exact solution and the numerical solution. We obtained a step size restriction for the free STMs. It is shown that free STM with $\theta \in [1/2, 1]$ can inherit the exponential stability of the original equations for any given step size. Therefore, free STM with $\theta \in [1/2, 1]$  is superior to the free EM given previously in the literature. The theoretical analysis and numerical simulations demonstrated that the free STM with $\theta \in [1/2, 1]$ (especially the free BEM) is very promising. It is shown that the numerical approximations generated by the free STMs to free SDEs with coefficients in the stochastic term that do not satisfy the locally operator Lipschitz condition also converge to the exact solution with order $1/2$ and behave exponentially stable. In addition, the solution to free SDEs should be self-adjoint, and the eigenvalues of the numerical solutions should be real. We obtained a few eigenvalues with a very small imaginary part when simulating free GBM I \eqref{geo1}. Looking for numerical methods that are structure-preserving will be considered in future work.
We end our conclusions with the following open questions:

${\rm (a)}.$  The operator Lipschitz conditions for the coefficients $\alpha,\beta,$ and $\gamma$ are crucial to proving the existence and uniformly boundedness of the numerical solution. It is interesting to explore the numerical methods for the free SDEs with weak conditions on the coefficients, in particular for the ones with sublinear or superlinear drift coefficients.

${\rm (b)}.$ For the free OU equation \eqref{ou}, the numerical experiment shows that the strong convergence order is $1$ for the free STMs. Is it possible to prove this order in theory? Referring to the classical case, we may introduce the free analogue of Milstein method by using the non-commutative stochastic Taylor expansion \cite[Corollary 5.8]{Azamov2009} (see also \cite{schluechtermannNumericalSolutionFree2022a}). Is it possible to show the strong convergence of the free Milstein method? When our paper was under peer review, Schl\"{u}chtermann and Wibmer announced an interesting paper on arXiv regarding the free Milstein method \cite{wibmer_milstein_2025}.

${\rm (c)}.$ It can be shown that the spectral distribution of the free GBM I \eqref{geo1} shrinks to zero exponentially fast \cite{karginFreeStochasticDifferential2011} when $\mu<0.$ But by Theorem \ref{12wdx}, we require $\mu < -1/2,$ i.e., $\bar{L}>0$ for the stability. So there is a gap between $0$ and $-1/2.$ A similar phenomenon also happens for the free GBM II \eqref{geo}. Is it possible to fill this gap in theory? 

\section*{Acknowledgments}
We would like to thank Profs. Siqing Gan, Yong Jiao, Xiaojie Wang, and Lian Wu for their helpful discussion. We also would like to thank the anonymous reviewers for their helpful remarks that improved this paper.

\section*{Funding}
This work was supported by the Natural Science Foundation of Changsha (Nos. kq2502101, kq2402192), the National Natural Science Foundation of China (Nos. 12371417, 12471394, 12031004, 12071488), Postdoctoral Fellowship Program of CPSF (No. GZC20233194), and the Fundamental Research Funds for the Central Universities of Central South University (No. 2025ZZTS0604).

\bibliographystyle{plain}
\bibliography{ref}

\section{Appendix}\label{sec:appendix}

Since the free OU equation does not have a trivial solution, it motivates us to introduce the exponential stability in mean square of the perturbation solution (see \cite{DAmbrosio2023} for the classical case).
\begin{Defi}
Let $\{U_t\}_{t\geq 0}$ and $\{V_t\}_{t\geq 0}$ be two solutions to the free SDEs \eqref{eq:fSDE} with different initial values $U_0$ and $V_0$, respectively. The free SDEs \eqref{eq:fSDE} are said to be exponentially stable in mean square if there exist constants $c_1, c_2 >0$ such that, for all initial value $U_0,V_0 \in \mathscr{F}^{sa},$
\begin{equation}
		\left\|U_t-V_t \right\|_2^2 \le  c_1 e^{-c_2 t} \left\|U_0-V_0 \right\|_2^2,
		\end{equation}
for all $t \geq 0.$
\end{Defi}

	\begin{Prop}\label{ou:stable-solution}
		Let $\{U_t\}_{t\geq 0}$ and $\{V_t\}_{t\geq 0}$ be two solutions to the free OU equation \eqref{ou} with different initial values $U_0$ and $V_0$, respectively. If $\mu<0$,
		then there exists a constant $C'_{\ref{ou:stable-solution}}>0,$ such that 
		for all $ t\geq 0 $ 
		\begin{equation*}
		\left\|U_t-V_t \right\|_2^2\le e^{-C'_{\ref{ou:stable-solution}} t} \left\|U_0 -V_0\right\|_2^2.
		\end{equation*}
	\end{Prop}

\begin{proof}
Since $\{U_t\}_{t\geq 0}$ and $\{V_t\}_{t\geq 0}$ are two solutions to the free OU equation with different initial values $U_0$ and $V_0$, respectively, we can obtain 
\begin{equation}
{\rm d}(U_t-V_t)=\mu (U_t-V_t) {\rm d}t.
\end{equation}
For all $t \geq 0,$ the free It\^{o} formula \eqref{eq:free-ito} readily implies that (see also \cite{Kemp2016})
		\begin{equation}\label{itor}
		\begin{split}
		&{\rm d} (e^{-\mu t} (U_t-V_t)^2) \\
		&=e^{-\mu t} \cdot {\rm d} (U_t-V_t)^2 + (U_t-V_t)^2 \cdot {\rm d} e^{-\mu t}  \\
		&=e^{-\mu t} \left((U_t-V_t) \cdot {\rm d} (U_t-V_t) + {\rm d} (U_t-V_t)\cdot (U_t-V_t)+ {\rm d} (U_t-V_t) \cdot {\rm d} (U_t-V_t)\right)+ (U_t-V_t)^2 \cdot {\rm d} e^{-\mu t}\\
		&=e^{-\mu t} \left(\mu(U_t-V_t)^2  {\rm dt}+\mu(U_t-V_t)^2 {\rm dt}-\mu (U_t-V_t)^2 {\rm d} t\right).
		\end{split}
		\end{equation}
		Integrating from $0$ to $t$ on both sides of \eqref{itor} gives
		\begin{equation*}
			e^{-\mu t} (U_t -V_t)^2 - (U_0 -V_0)^2=\mu \int_0^t e^{-\mu s} (U_s-V_s)^2  {\rm d s}.
		\end{equation*}
Hence, note that $\mu<0,$ we have 
\begin{equation*}
\left\|U_t -V_t \right\|_2^2 \leq e^{\mu t} \left\|U_0-V_0 \right\|_2^2,
\end{equation*}
for all $t \geq 0.$
\end{proof}
		
Analogue to the classical case \cite{chenConvergenceStabilityBackward2020,MR4589707}, we have the following definition of exponential stability in mean square for a numerical method which produces the numerical solutions $\bar{U}_n$ and $\bar{V}_n$ with different initial values $U_0$ and $V_0$:
\begin{Defi}\label{def:rdjn}
For a given step size $h>0$, a numerical method is said to be exponentially stable in mean square if there exist constants $C_1, C_2>0$, such that with initial value $U_0,V_0\in \mathscr{F}^{sa}$,
		\begin{equation} \label{wdx}
		\left\|\bar{U}_n-\bar{V}_n \right\|_2^2\le C_1 e^{- C_2 nh} \left\|U_0-V_0 \right\|_2^2,
		\end{equation}
		for any $ n \in \mathbb{N}.$
	\end{Defi}

\begin{Theo}\label{12wdxrd}
Let $\{\bar{U}_n\}$ and $\{\bar{V}_n\}$ be numerical solutions to the free OU equation \eqref{ou}. 
	Suppose that $\mu <0.$
	Then we have the following statements.
	\begin{itemize}
	\item If $ \theta \in [0, 1/2),$ set $\rho$ such that $0< \rho < \frac{-2}{(1-2\theta)\mu},$ then for any given step size $0< h \leq \rho,$ there exists a constant $C_{\ref{12wdxrd}}>0$ such that		
	\begin{equation*}
		\left\|\bar{U}_n -\bar{V}_n\right\|_2^2\le e^{-C_{\ref{12wdxrd}}nh} \left\|U_0-V_0 -\theta \mu h (U_0- V_0)\right\|_2^2,
		\end{equation*}
		for any $ n\in \mathbb{N}$.
	\item  If $\theta \in [1/2, 1]$, then for any given step size $h>0,$
there exists a constant $C'_{\ref{12wdxrd}}>0$ such that
	\begin{equation*}
		\left\|\bar{U}_n -\bar{V}_n\right\|_2^2\le e^{-C'_{\ref{12wdxrd}}nh} \left\|U_0-V_0 -\theta \mu h (U_0-V_0)\right\|_2^2,
		\end{equation*}
		for any $ n\in \mathbb{N}$.
	\end{itemize}		
\end{Theo}

\begin{proof}
Recall that for the free OU, the free STMs are given by 
\begin{equation}\label{eq:theta3}
\bar{U}_{n+1}-\bar{V}_{n+1}- \theta\mu h (\bar{U}_{n+1} - \bar{V}_{n+1}) =\bar{U}_n-\bar{V}_n+(1-\theta) \mu h (\bar{U}_{n} - \bar{V}_{n}).
\end{equation} 
Thus, we can bound the left side of \eqref{eq:theta3} as follows:
		\begin{equation}\label{th8.1}
		\begin{split}
		\left\| \bar{U}_{n+1}-\bar{V}_{n+1}- \theta\mu h (\bar{U}_{n+1} - \bar{V}_{n+1})\right\|_2^2 & =(1- \theta \mu h)^2 \left\|\bar{U}_{n+1}-\bar{V}_{n+1}\right\|_2^2\\
		 & \geq  \left(1- 2 \theta \mu  h\right) \left\|\bar{U}_{n+1} - \bar{V}_{n+1}\right\|_2^2.
		\end{split}
		\end{equation}
Similar to \eqref{th6.2}, the right side of \eqref{eq:theta3} can be estimated as follows: 
\begin{equation}\label{th8.2}
\begin{split}
& \left\| \bar{U}_n-\bar{V}_n+(1-\theta) \mu h (\bar{U}_{n} - \bar{V}_{n}) \right\|_2^2\\
& = \left\| \bar{U}_n-\bar{V}_n-\theta \mu h(\bar{U}_{n} - \bar{V}_{n})\right\|_2^2  + (1-2\theta) \mu^2 h^2 \left\| \bar{U}_{n} -\bar{V}_{n}\right\|_2^2+ 2 \mu  h  \left\| \bar{U}_{n} -\bar{V}_{n}\right\|_2^2\\
&= \left\| \bar{U}_n-\bar{V}_n-\theta \mu h(\bar{U}_{n} -\bar{V}_{n})\right\|_2^2+((1-2\theta) \mu^2 h + 2 \mu)h  \left\| \bar{U}_n-\bar{V}_n\right\|_2^2.
\end{split}
\end{equation}
Moreover, it follows from $\|U-V\|_2^2\leq2\|U\|_2^2+2\|V\|_2^2$ that 
\begin{equation*}
\begin{split}
 \left\| \bar{U}_n-\bar{V}_n-\theta \mu h (\bar{U}_{n} - \bar{V}_{n}) \right\|_2^2 & \leq 2\left\| \bar{U}_n-\bar{V}_n\right\|_2^2+2 \theta^2 \mu^2 h^2\left\| \bar{U}_n - \bar{V}_n \right\|_2^2\\
 & \leq(2+2 \theta^2 \mu^2 h^2)\left\| \bar{U}_n-\bar{V}_n\right\|_2^2,
 \end{split}
 \end{equation*}
which implies
 \begin{equation}\label{th8.3}
\left\| \bar{U}_n-\bar{V}_n\right\|_2^2\geq \frac{1}{2+2 \theta^2\mu^2 h^2}  \left\| \bar{U}_n-\bar{V}_n-\theta\mu h (\bar{U}_{n} - \bar{V}_{n}) \right\|_2^2.
 \end{equation}

Now denote $f_n:=\left\|\bar{U}_n-\bar{V}_n-\theta \mu h(\bar{U}_{n} -\bar{V}_{n}) \right\|_2^2$. 
Let $\theta \in [0, 1/2),$ for any $0< h \leq \rho< \frac{-2}{(1-2\theta)\mu},$ combining \eqref{eq:theta3}, \eqref{th8.2}, and \eqref{th8.3} leads to 
 \begin{equation}\label{th8.4}
f_{n+1}\leq \left(1+\frac{((1-2\theta) \mu^2 h +2\mu)h}{2+ 2\theta^2 \mu^2 h^2} \right) f_n \leq(1-C_{\ref{12wdxrd}}h)f_n,
\end{equation}
where $C_{\ref{12wdxrd}}:=\frac{(2\theta-1)\mu^2 \rho- 2\mu}{2+2\theta^2h^2\mu^2}>0$.
Applying $(1+x)\leq e^x$ to \eqref{th8.4} and iterating $n$ times, we obtain
\begin{equation*}\label{th8.5}
f_n \leq e^{-C_{\ref{12wdxrd}}nh}f_0.
\end{equation*}
Therefore, combining \eqref{th8.1}, we have
\begin{equation*}
\begin{split}
 \left\|\bar{U}_{n} -\bar{V}_n\right\|_2^2 
& \leq  \frac{1}{\left(1-2 \theta \mu h\right)}e^{-C_{\ref{12wdxrd}}nh}f_0 \leq e^{-C_{\ref{12wdxrd}}nh} f_0,
\end{split}
\end{equation*}
which completes our first statement.

For $\theta \in [1/2, 1]$, note that $ (1-2\theta) \mu^2 h^2 \| \bar{U}_n-\bar{V}_n \|_2^2\leq0$ for any $h>0.$ Hence, \eqref{th8.2} reduces to 
\begin{equation}
\left\| \bar{U}_n-\bar{V}_n+(1-\theta) \mu h (\bar{U}_{n} - \bar{V}_{n}) \right\|_2^2  \leq \left\| \bar{U}_n-\bar{V}_n-\theta \mu h(\bar{U}_{n} -\bar{V}_{n})\right\|_2^2+ 2 \mu h \left\| \bar{U}_n-\bar{V}_n\right\|_2^2.
\end{equation}
Similar to \eqref{th8.4}, we can obtain 
\begin{equation}
f_{n+1}\leq \left(1+ \frac{2 \mu h}{2+ 2\theta^2 \mu^2 h^2} \right) f_n =(1-C'_{\ref{12wdxrd}}h)f_n,
\end{equation}
where $C'_{\ref{12wdxrd}}:=\frac{-2\mu}{2+2\theta^2h^2 \mu^2}>0$.
Hence, for any given step size $h>0,$ we have 
\begin{equation*}
f_n \leq e^{-C'_{\ref{12wdxrd}}nh}f_0,
\end{equation*}
which completes our proof.
\end{proof}

In fact, Proposition \ref{ou:stable-solution} and Theorem \ref{12wdxrd} can be generalized to the following free SDEs 
\begin{equation}\label{eq:fSDEdb}
{\rm d} U_t =\alpha (U_t) {\rm dt} +\sum_{i=1}^k (\beta^i (U_t)  {\rm d} W_t + {\rm d} W_t \gamma^i (U_t)).
\end{equation}
We remark that the proofs are quite similar to the proofs of Proposition \ref{prop:stable-solution} and Theorem \ref{12wdx}, and we omit the details.

\begin{Assu}\label{ass4}	
There exists a constant $ \bar{L} >0,$ such that for any $ U \in \mathscr{F}^{sa},$ 
	\begin{equation}\label{wdxtjrd}
	\psi\left( (U-V) (\alpha(U)-\alpha(V)) \right) +\frac{k}{2}\sum_{i=1}^k\left(\left\|\beta^i(U)-\beta^i(V) \right\|_2^2+\left\|\gamma^i(U)-\gamma^i(V) \right\|_2^2\right)\leq -\bar{L} \left\| U-V \right\|_2^2.
	\end{equation}
\end{Assu}
	
	\begin{Prop}\label{stable-solution}
		Let $\{U_t\}_{t\geq 0}$ and $\{V_t\}_{t\geq 0}$ be two solutions to the free SDEs \eqref{eq:fSDEdb} with different initial values $U_0$ and $V_0$, respectively. Suppose that Assumption \ref{ass4} holds,
		then there exists a constant $C'_{\ref{stable-solution}}>0,$ such that 
		for all $ t\geq 0 $ 
		\begin{equation*}
		\left\|U_t-V_t \right\|_2^2\le e^{-C'_{\ref{stable-solution}} t} \left\|U_0 -V_0\right\|_2^2.
		\end{equation*}
	\end{Prop}

\begin{Theo}\label{12wdxrd1}
Let $\{\bar{U}_n\}$ and $\{\bar{V}_n\}$ be numerical solutions to the free SDEs \eqref{eq:fSDEdb} with different initial values $U_0$ and $V_0$, respectively.
Suppose that Assumption \ref{ass4} holds, and additionally we assume that $\alpha$ is operator Lipschitz in the $L^2(\psi)$ norm with $\bar{K}: =L_\alpha^2.$
Then we have the following statements.
	\begin{itemize}
	\item If $\theta \in [0, 1/2),$ set $\rho$ such that $0< \rho < \frac{2\bar{L}}{(1-2\theta)\bar{K}},$ then for any given step size $0< h \leq \rho,$ there exists a constant $C_{\ref{12wdxrd1}}>0$ such that
	\begin{equation*}
		\left\|\bar{U}_n -\bar{V}_n\right\|_2^2\le e^{-C_{\ref{12wdxrd1}}nh} \left\|U_0-V_0 -\theta h (\alpha(U_0)-\alpha(V_0))\right\|_2^2,
		\end{equation*}
		for any $ n\in \mathbb{N}$.
		\item If $\theta \in [1/2, 1]$, then for any given step size $h>0$, there exists a constant $C'_{\ref{12wdxrd1}}>0$ such that
		\begin{equation*}
		\left\|\bar{U}_n -\bar{V}_n\right\|_2^2\le e^{-C'_{\ref{12wdxrd1}}nh} \left\|U_0-V_0 -\theta h (\alpha(U_0)-\alpha(V_0))\right\|_2^2,
		\end{equation*}
		for any $ n\in \mathbb{N}$.
	\end{itemize}		
\end{Theo}

\end{document}